\newcommand{\CC}{\mathrm{\mathbb{C}}}
\newcommand{\NN}{\mathrm{\mathbb{N}}}
\newcommand{\RR}{\mathrm{\mathbb{R}}}
\newcommand{\diag}{\mathrm{diag}}
\newcommand{\del}{\partial}
\newcommand{\SU}{\mathrm{SU}}
\newcommand{\U}{\mathrm{U}}
\newcommand{\SO}{\mathrm{SO}}
\newcommand{\Sp}{\mathrm{Sp}}
\newcommand{\bbN}{\mathbb{N}}
\newcommand{\rFs}[5]{\,_{#1}F_{#2} \left(\genfrac{.}{.}{0pt}{}{#3}{#4}\ ;#5 \right)}
\newtheorem{theorem}{Theorem}[section]
\newtheorem{proposition}[theorem]{Proposition}
\newtheorem{lemma}[theorem]{Lemma}
\newtheorem{definition}[theorem]{Definition}
\newtheorem{corollary}[theorem]{Corollary}
\theoremstyle{definition}
\newtheorem{remark}[theorem]{Remark}
\numberwithin{equation}{section}
\begin{document}

\title{ Deformation of matrix-valued orthogonal polynomials related to Gelfand pairs} 

\date{\today} 

\author[van Pruijssen]{M. van Pruijssen}
\author[Roman]{P. Rom\'an}

\address[van Pruijssen]{Institut f\"ur Mathematik, Universit\"at Paderborn, Warburgerstrasse 100, 33098 Paderborn, Germany}
\address[Roman]{CIEM, FaMAF, Universidad Nacional de C\'ordoba, Medina Allende s/n Ciudad Universitaria, C\'ordoba, Argentina}

\begin{abstract}
In this paper we present a method to obtain deformations of families of matrix-valued orthogonal polynomials that are associated to the representation theory of compact Gelfand pairs. These polynomials have the Sturm-Liouville property in the sense that they are simultaneous eigenfunctions of a symmetric second order differential operator and we deform this operator accordingly so that the deformed families also have the Sturm-Liouville property. Our strategy is to deform the system of spherical functions that is related to the matrix-valued orthogonal polynomials and then check that the polynomial structure is respected by the deformation. Crucial in these considerations is the full spherical function $\Psi_{0}$, which relates the spherical functions to the polynomials. We prove an explicit formula for $\Psi_{0}$ in terms of Krawtchouk polynomials for the Gelfand pair $(\SU(2)\times\SU(2),\diag(\SU(2)))$. For the matrix-valued orthogonal polynomials associated to this pair, a deformation was already available by different methods and we show that our method gives same results using explicit knowledge of $\Psi_{0}$.

Furthermore we apply our method to some of the examples of size $2\times2$ for more general Gelfand pairs. We prove that the families related to the groups $\SU(n)$ are deformations of one another. On the other hand, the families associated to the symplectic groups $\Sp(n)$ give rise to a new family with an extra free parameter. 

\end{abstract}


\maketitle

\section{Introduction and statement of results}

It is  well known that the classical orthogonal polynomials are characterized by
the property that their derivatives are also orthogonal polynomials, see
e.g.~\cite{Hahn}. One can exploit this characterization, for instance, to
construct the Gegenbauer polynomials $C^{(\nu)}_n(y)$, for integer values of
$\nu$,  by repeated differentiation of the Chebyshev polynomials
$U_n(y)=C^{(1)}_n(y)$. In this way, the orthogonality relations, differential
equations and other properties of the Gegenbauer polynomials can be obtained from
those of the Chebyshev polynomials. In this paper we deal with the analogous
construction for matrix-valued orthogonal polynomials.

For $N\in \mathbb{N}$, let $W:(a,b)\to \mathbb{C}^{N\times N}$ be a positive
definite smooth weight matrix with finite moments. We consider the
sesqui-linear pairing defined for a pair of matrix polynomials $P,Q\in 
\mathbb{C}^{N\times N}[y]$ by the matrix \begin{equation} \label{eq:MV_inner_P}
\langle P, Q \rangle_W = \int_I \, P(y)W(y)Q(y)^* \,  dy, \end{equation} where
$Q(x)^*$ denotes the conjugate transpose of $Q(x)$. We say that $(P_d)_{d}$,
$P_d \in  \mathbb{C}^{N\times N}[x], d\in\bbN_{0}$, is a sequence of matrix-valued orthogonal
polynomials (MVOPs from now on) with respect to \eqref{eq:MV_inner_P} if
\begin{enumerate} \item $\langle P_d, P_{d'} \rangle_W = 0$ if $d\neq d'$, \item
$\deg P_d=d$, for all $d\in \mathbb{N}_0$, \item the leading coefficient of
$P_d$ is invertible for all $d\in\mathbb{N}$. \end{enumerate} A sequence of
monic MVOPs can be obtained by applying the Gram--Schmidt process on the ordered
basis $(I, yI,y^2I,\ldots)$, where $I$ denotes the $N\times N$ identity matrix.

We say that the differential operator $D:\mathbb{C}^{N\times N}[y]\to
\mathbb{C}^{N\times N}[y]$ is symmetric with respect to the matrix weight $W$ if
it satisfies $\langle PD,Q\rangle = \langle P,QD\rangle$ for all matrix
polynomials $P,Q$. Any sequence $(P_d)_d$ of MVOPs with respect to $W$ is a
basis of the space of matrix polynomials, so that the symmetry condition is
equivalent to 
\begin{equation} 
\label{eq:Symmetry_D_Pd} \langle P_dD,P_{d'}
\rangle = \langle P_d,P_{d'}D \rangle,\quad\mbox{for all $d,d'\in\mathbb{N}_0$.}
\end{equation} 
A pair $(W,D)$ consisting of a matrix weight $W$ together with a
matrix-valued differential operator $$D=\partial_y^2 \, F_2 + \partial_y \, F_1
+ F_0,$$ where $F_i$ is a polynomial of degree at most $i$, which is symmetric
with respect to $\langle\cdot,\cdot\rangle$ is called a matrix-valued classical
pair (MVCP from now on). Given a MVCP $(W,D)$, any sequence of MVOPs with
respect to $W$ is a family of simultaneous eigenfunctions of $D$, see
\cite[Proposition 2.10]{GrunT}.

\begin{definition} 
A deformation of a MVCP $(W,D)$ is a family
$(W^{(\kappa)},D^{(\kappa)})_{\kappa\in K}$ of MVCPs, where $K\subset\mathbb{R}$
is an open interval, so that $(W,D)=(W^{(\kappa_{0})},D^{(\kappa_{0})})$ for
some $\kappa_{0}\in K$.

We say that a deformation $(W^{(\kappa)}, D^{(\kappa)})_{\kappa\in K}$ allows
for the shift $\partial_{y}$, if for a family of MVOPs $(P_{d}^{(\kappa)})_{d}$
of $W^{(\kappa)}$, we have that $(\partial_{y}
P^{(\kappa)}_{d})_{d}$ is a family of MVOPs for
$W^{(\kappa+1)}$. 
\end{definition}

The goal of this paper is to present a method to find deformations that allow
for the shift $\partial_{y}$, of MVCPs $(W,D)$ that are associated to compact
Gelfand pairs of rank one \cite{HvP, vP}. We also present a Rodrigues type
formula for these cases. We apply our method to some examples of size $2\times
2$ taken from \cite{vPR}, where we observe that some of the families actually
fit in a single deformation, that moreover, allows for the shift $\partial_{y}$.

An earlier result in this direction is the case studied in \cite{KdlRR}, where
sequences of matrix-valued Chebyshev polynomials  \cite{KvPR,KvPR2} are deformed
into matrix-analogues of Gegenbauer polynomials. However, those results are
based on a decomposition for the weight matrix that is quite particular for the
Gelfand pair $(\SU(2)\times\SU(2),\diag(\SU(2)))$. If we apply our method to
this case, we obtain the same deformations.\\

\textbf{MVOPs associated to representation theory.} 
The theory of MVOPs dates
back to the work of M.G. Krein in the 1940s. Since then, the theory has been
developed and connected to different fields such as scattering theory  and
spectral analysis, see for instance \cite{Berez,Geronimo,
Groenevelt--Ismail--Koelink, Groenevelt--Koelink}. In \cite{Duran97} A. Dur\'an
raises the question of whether it is possible to construct a sequence of MVOPs
together with a matrix valued differential operator which has the MVOPs as
simultaneous eigenfunctions. The first results in this direction are  given in
\cite{GrunPT} from the study of matrix-valued spherical functions on
$\SU(3)/\mathrm{U}(2)$. The link between matrix-valued spherical functions and
MVOPs has been developed in several papers following \cite{Koornwinder1985,
GrunPT}, see for instance \cite{KvPR,KvPR2,TZ}. This led to a uniform
construction of MVOPs for compact Gelfand pairs $(G,K)$ of rank one \cite{HvP,
vP, vPR}. The families of MVOPs obtained from representation theory have many
interesting properties and, in some cases, can be described in great detail.
Certain families of MVOP related to the pair $(\SU(n+1),\U(n))$ have been
exploited to derive stochastic models, see \cite{GdI2,GPT3,dI1}. The family
described in Section \ref{sec:MVchebyshev}  leads to models of continuous-time
bivariate Markov processes which are analyzed in detail in \cite{dlIR}.

In this paper we deal with the families of MVOPs obtained in \cite{HvP, vP,
vPR}. We develop a method to deform these families that relies in a specific
decomposition of the weight matrix and the differential operator, see
\eqref{eq:structure_W_group_level} and \eqref{eq:Omega_1}. The same
decomposition arises in families of MVOPs constructed independently from group
theory as well, see for instance \cite[(4.8)]{DuraG}. Our
method could also be carried out in these cases, to investigate the existence of deformations. The construction of MVOPs that we consider in this paper applies to
a triple $(G,K,\mu)$, where $(G,K)$ is a compact Gelfand pair of rank one and
$\mu$ is a suitable representation of $K$, see \cite[Table 1]{vPR}. The output
is a family of $\mathbb{C}^{N_\mu\times N_\mu}$-valued functions
$\{\Psi^{\mu}_d:d\in \mathbb{N}_0\}$, defined on the interval $[0,1]$, together
with a matrix-valued differential operator $\Omega^\mu$ for which the
$\Psi^{\mu}_d$ are eigenfunctions. The function $\Psi^{\mu}_d$ is called the
{\it  full spherical function} of type $\mu$ and degree $d$. It follows from
\cite{HvP,vP} or \cite[Theorem 2.7]{vPR} that 
$$\Psi^{\mu}_d (y) = P^{\mu}_d(y)\Psi^{\mu}_0(y),\qquad y\in[0,1],$$ 
for all $d\in \NN$, where $P^{\mu}_d$ is a polynomial of degree $d$.

As a consequence of the Schur orthogonality relations, the full spherical
functions $(\Psi^{\mu}_d)_{d}$ are pairwise orthogonal. More precisely, in
\cite{HvP, vP} it is proven that 
\begin{equation} 
\label{eq:orthogonality_Psi}
\int_0^1 \Psi^{\mu}_d(y) T^{\mu} (\Psi^{\mu}_{d'}(y))^* \, (1-y)^\alpha y^\beta
\, dy = 0,\qquad d\neq d', 
\end{equation} 
where $ (1-y)^\alpha y^\beta$ is the
ordinary Jacobi weight on the interval $[0,1]$ that is associated to the Riemann symmetric space $G/K$
and $T^{\mu}$ is a constant diagonal matrix. The
spherical functions are eigenfunctions of the Casimir operator of the group $G$.
This implies that the full spherical function $\Psi^{\mu}_d$ is an eigenfunction
of a single variable differential operator $\Omega^{\mu}$, the radial part of
the Casimir operator. In other words there is an operator 
\begin{equation}
\label{eq:Omega_1} 
\Omega^{\mu}=y(1-y)\partial_y^2 + a(y)\partial_y+F^{\mu}(y),
\end{equation} such that \begin{equation} \label{eq:Psi_d_eig_Omega}
\Psi^{\mu}_d(y) \, \Omega^{\mu} = y(1-y)(\Psi^{\mu}_d)''(y) + a(y)
(\Psi^{\mu}_d)'(y) + \Psi^{\mu}_d(y)\, F^{\mu}(y)= \Lambda^{\mu}_d \,
\Psi^{\mu}_d(y), 
\end{equation} 
where $\Lambda^{\mu}_d$ is a constant diagonal
matrix, see for instance \cite{HvP} or \cite[Section 3]{vPR}. Note that the
expression (\ref{eq:Omega_1}) is available for any irreducible
$K$-representation, see e.g.~\cite[Prop.~9.1.2.11]{Warner2}. The full spherical
function $\Psi^{\mu}_0$ of degree zero is also a solution to the first-order
differential equation, \begin{equation} \label{eq:first_O_DE_Psi0}
y(1-y)\partial_y \Psi^{\mu}_0(y)=(S^{\mu}+yR^{\mu})\Psi^{\mu}_0(y),
\end{equation} for certain constant matrices $R^{\mu}$ and $S^{\mu}$,
\cite[Theorem 3.1]{vPR}. Note that differentiating (\ref{eq:first_O_DE_Psi0})
yields a matrix-valued hypergeometric differential equation for
$\Psi_{0}^{\mu}$ in the sense of \cite{TiraPNAS}. Observe that
\eqref{eq:first_O_DE_Psi0} can be seen as a differential operator acting on the
left of $\Psi^{\mu}_0$ while \eqref{eq:Psi_d_eig_Omega} is an operator acting on
the right. One of the consequences of \eqref{eq:first_O_DE_Psi0} is that
$\Psi^{\mu}_0$ is invertible on $(0,1)$, see \cite[Cor.~3.4]{vPR} so that
\begin{equation*}
P^{\mu}_d(y)=\Psi^{\mu}_d (y) \,
(\Psi^{\mu}_0(y))^{-1},
\end{equation*}
Now that the link between the spherical functions of type $\mu$ and
the corresponding MVOPs is clear, we allow ourselves to drop the index $\mu$
from the notation. However, various constants and coefficients that occur later
on, do depend on $\mu$.

The matrix-valued polynomials $P_d$ satisfy a three-term recurrence relation,
\begin{equation}
\nonumber
yP_d(y)=A_d P_{d+1}(y) + B_d P_{d}(y)
+ C_d P_{d-1}(y),
\end{equation} with $A_d$ being an invertible diagonal matrix
so that the leading coefficient of $P_d$ is invertible for all
$d\in\mathbb{N}_0$, see \cite[Section 1]{HvP}. Furthermore the orthogonality
relations for the full spherical functions \eqref{eq:orthogonality_Psi} imply
that the polynomials $P_d$ are orthogonal with respect to the pairing
\begin{equation}
\nonumber
\langle P,Q\rangle_W=\int_0^1 P(y)W(y)Q(y)^*dy, \end{equation} where $W$ is the weight
matrix defined by 
\begin{equation} 
\label{eq:structure_W_group_level}
W(y)=(1-y)^\alpha y^\beta\, W_{pol}(y), \qquad W_{pol}(y)= \Psi_0(y) \, T \,
(\Psi_0(y))^*. 
\end{equation}

The first order differential equation \eqref{eq:first_O_DE_Psi0} implies that
the polynomials $P_d$ are eigenfunctions of the hypergeometric differential
operator 
\begin{equation} 
\label{eq:intro Dmu} 
D=\Phi_0\, \Omega \,
(\Phi_0)^{-1}=y(y-1)\partial_{y}^2+\partial_y(C-yU)-V, 
\end{equation} 
where
$$C=\frac{\lambda_1m}{rp^2(M-m)}-2 S, \quad U=2R-\frac{\lambda_1}{rp^2},\quad
V=-\frac{\Lambda_0}{rp^2}.$$ 
Here $M,m,p,r$ are constants related with the pair $(G,K)$ and their values 
are given in \cite[Table 2]{vPR} for the various cases.
The differential operator \eqref{eq:intro Dmu} is symmetric with respect to $W$
so that the pair $(W,D)$ is a MVCP. This fact follows from the symmetry of the
Casimir operator $\Omega$ on $G$.\\

\textbf{Deformation of MVOPs.}
We summarize the discussion above as follows: we have two pairs, a pair
$(w,\Omega)$ together with a sequence of full spherical functions $(\Psi_d)_{d}$
and a matrix valued classical pair  $(W,D)$ with a sequence of MVOPs
$(P_d)_{d}$. These pairs are related by the function $\Psi_{0}$, as follows:
\begin{equation*} 
W(y) = y^{\alpha}(1-y)^{\beta}  \Psi_0(y) \, T \,
(\Psi_0(y))^*,\qquad D=\Psi_0\, \Omega \, \Psi_0^{-1}, 
\end{equation*}
\begin{equation*} 
P_d=\Psi_d(\Psi_0)^{-1} \quad \forall d\in\mathbb{N}.
\end{equation*}
We shall refer to the functions, weights, differential operators on the
spherical level and on the polynomial level, where the relation is given by the
function $\Psi_{0}$.

The Jacobi polynomials in a single variable can be given as a family of scalar
valued orthogonal polynomials $(P^{(\alpha,\beta)}_{d})_{d}$, associated to the
scalar valued classical pair $(w^{(\alpha,\beta)},D^{(\alpha,\beta)})$. In this
case, there is a two dimensional deformation that allows for the fundamental
shifts \begin{itemize} \item $G_{+}(\alpha,\beta)=\del_{y}$, the shift is
$(1,1)$, \item
$G_{-}(\alpha,\beta)=2y(y-1)\del_{y}+(\alpha+\beta)(2y-1)+\alpha-\beta$, the
shift is $(-1,-1)$, \item $E_{+}(\alpha,\beta)=y\del_{y}+\beta$, the shift is
$(1,-1)$, \item $E_{-}(\alpha,\beta)=(y-1)\del_{y}+\alpha$, the shift is
$(-1,1)$. \end{itemize} This means, for example, that
$(E_{+}(\alpha,\beta)P^{(\alpha,\beta)}_{d+1})_{d}$ is a family of scalar valued
orthogonal polynomials associated to
$(w^{(\alpha+1,\beta-1)},D^{(\alpha+1,\beta-1)})$. The Jacobi polynomials with
geometric parameters (i.e.~$(\alpha,\beta)$ is associated to the root
multiplicities of compact symmetric spaces) are examples of MVOPs, where
$\Psi_{0}=1$. The theory of the shift operators is now an application of the
Heckman-Opdam theory on the level of the spherical functions, and it translates
in the above fashion to the shift operators of the Jacobi polynomials. This
transition becomes more involved if we consider more general $K$-types, because
$\Psi_{0}$ is then no longer trivial. This is why we content to study only
deformations that allow for the shift operator $\partial_{y}$. We proceed in
three steps.

\begin{itemize}
	
\item[\textbf{D1.}] The deformed differential operators
$D^{(\kappa)}$ have to satisfy
$D^{(\kappa+1)}\circ\partial_{y}=\partial_{y}\circ D^{(\kappa)}$. In this way,
whenever $(P_{d}^{(\kappa)})_{d}$ is a sequence of eigenfunctions of
$D^{(\kappa)}$, then so is $(\partial P_{d}^{(\kappa)})_{d}$ for
$D^{(\kappa+1)}$, with $\kappa\in\NN_{0}$. The dependence on $\kappa$ is polynomial,
hence we may vary $\kappa$ in $\CC$ by analytic continuation.

\item[\textbf{D2.}] The deformed differential operators $D^{(\kappa)}$ give rise
differential operators on the level of spherical functions, 
\begin{eqnarray}\nonumber
\Omega^{(\kappa)}:=\Psi_{0}^{-1}\circ
D^{(\kappa)}\circ\Psi_{0}=y(1-y)\partial_y^2 +
a^{(\kappa)}(y)\partial_y+F^{(\kappa)}(y). 
\end{eqnarray}
We stipulate that
$\Omega^{(\kappa)}$ be symmetric with respect to deformed weight
\begin{eqnarray}
\label{eqn: deformed weight}
w^{(\kappa)}(y)dy=T^{(\kappa)}y^{\alpha(\kappa)}(1-y)^{\beta(\kappa)}dy,
\end{eqnarray} 
with $T^{(\kappa)}>0$ diagonal. This yields a equations for
$T^{(\kappa)},a^{(\kappa)},F^{(\kappa)},\alpha(\kappa)$ and $\beta(\kappa)$.

\item[\textbf{D3.}] 
The question remains, whether the pair $(W^{(\kappa)},D^{(\kappa)})$ 
allows for the shift $\partial_y$. In other words, if $(P^{(\kappa)}_{d})_{d}$ is a
family of MVOPs with respect to $W^{(\kappa)}$ we want to decide wether the sequence of derivatives
 $(\partial_y P^{(\kappa)}_{d})_{d}$ is orthogonal with respect to $W^{(\kappa+1)}$. 
 Here we use criterion from Cantero, Moral and Vel\'azquez \cite{CantMV}, who characterized the sequences
MVOPs whose derivatives yield a families of MVOPs. 

\end{itemize}

Note that \textbf{D1} implies that $a^{(\kappa)}(y)$ is scalar valued function.
We can now state our main result. Let $(G,K,\mu)$  be a triple  as in
\cite{HvP}, and let $\Omega$ be the radial part of the Casimir operator
\eqref{eq:Omega_1}. Let $T$ be the diagonal matrix introduced in
\eqref{eq:orthogonality_Psi}. The associated matrix-valued classical pair
$(W,D)$ is given by (\ref{eq:structure_W_group_level}) and (\ref{eq:intro Dmu}).
\begin{theorem} 
\label{thm:deformation} For $\kappa\geq0$ let
$a^{(\kappa)}(y)=a(y)+\kappa(1-2y) = \beta+\kappa+1-y(\alpha+\beta+2\kappa+2)$,
let $F^{(\kappa)}$ be given by
$$F^{(\kappa)}=F-\kappa\Psi_0^{-1}(U+\kappa-1)\Psi_0-\kappa(1-2y)\Psi_0^{-1}\Psi_0'.$$
and let $T^{(\kappa)}$ be a solution of
\begin{equation}
\label{eq:conditionT}
T^{(\kappa)}\,(F^{(\kappa)}(y))^*=F^{(\kappa)}(y)\,T^{(\kappa)}. 
\end{equation}
Then the pair \begin{equation} \label{eq:Definition_Wnu_general}
W^{(\kappa)}(y)=(1-y)^{\alpha+\kappa}y^{\beta+\kappa} \, \Psi_0(y) \,
T^{(\kappa)} \, \Psi_0(y)^*, \qquad D^{(\kappa)}=\Psi_0^{-1}\,
\Omega^{(\kappa)}\, \Psi_0, \end{equation} is a MVCP, if $T^{(\kappa)}>0$.
\end{theorem}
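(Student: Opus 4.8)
The plan is to check the two requirements in the definition of a matrix-valued classical pair: that $D^{(\kappa)}$ is a matrix differential operator of the form $\del_y^2G_2+\del_y G_1+G_0$ with $\deg G_i\le i$, and that $D^{(\kappa)}$ is symmetric with respect to $W^{(\kappa)}$. Positivity and finiteness of the moments of $W^{(\kappa)}$ are then immediate from $T^{(\kappa)}>0$, the invertibility of $\Psi_0$ on $(0,1)$ (\cite[Cor.~3.4]{vPR}), boundedness of $\Psi_0$ on $[0,1]$, and $\alpha+\kappa,\beta+\kappa>-1$.

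First I would pin down the shape of $D^{(\kappa)}$. By construction (item \textbf{D1}) the deformation is chosen so that $D^{(\kappa)}$ is the matrix hypergeometric operator obtained from $D$ in \eqref{eq:intro Dmu} by the affine recursion forced by $D^{(\kappa+1)}\circ\del_y=\del_y\circ D^{(\kappa)}$; explicitly $D^{(\kappa)}=y(y-1)\del_y^2+\del_y(C^{(\kappa)}-yU^{(\kappa)})-V^{(\kappa)}$ with constant matrices $C^{(\kappa)},U^{(\kappa)},V^{(\kappa)}$, polynomial in $\kappa$ and reducing to $C,U,V$ at $\kappa=0$, so that $G_2=y(y-1)I$, $G_1=C^{(\kappa)}-yU^{(\kappa)}$ and $G_0=-V^{(\kappa)}$ have the required degrees. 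The real content here is to verify that forming $\Psi_0\,\Omega^{(\kappa)}\,\Psi_0^{-1}$ from the operator $\Omega^{(\kappa)}=y(1-y)\del_y^2+a^{(\kappa)}(y)\del_y+F^{(\kappa)}(y)$ with the $a^{(\kappa)},F^{(\kappa)}$ of the statement really produces polynomial coefficients: substituting $P\mapsto P\Psi_0$, expanding $(P\Psi_0)'$ and $(P\Psi_0)''$, and eliminating $\Psi_0',\Psi_0''$ by means of the first-order equation \eqref{eq:first_O_DE_Psi0}, $y(1-y)\Psi_0'=(S+yR)\Psi_0$, and its derivative, the correction terms $-\kappa\,\Psi_0^{-1}(U+\kappa-1)\Psi_0$ and $-\kappa(1-2y)\Psi_0^{-1}\Psi_0'$ in $F^{(\kappa)}$ together with the term $\kappa(1-2y)$ in $a^{(\kappa)}$ are precisely what cancels the $\Psi_0$-dependence and the spurious $1/(y(1-y))$ denominators.

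Next I would move the symmetry statement to the spherical level. For matrix polynomials $P,Q$ one has $\langle P,Q\rangle_{W^{(\kappa)}}=\int_0^1(P\Psi_0)\,y^{\beta+\kappa}(1-y)^{\alpha+\kappa}T^{(\kappa)}(Q\Psi_0)^*\,dy$, so $P\mapsto P\Psi_0$ identifies $\langle\cdot,\cdot\rangle_{W^{(\kappa)}}$ with the pairing $\langle\cdot,\cdot\rangle_{w^{(\kappa)}}$ of the scalar-times-diagonal weight $w^{(\kappa)}(y)=\rho^{(\kappa)}(y)\,T^{(\kappa)}$, $\rho^{(\kappa)}(y)=y^{\beta+\kappa}(1-y)^{\alpha+\kappa}$, and by the conjugation in \eqref{eq:Definition_Wnu_general} it carries $D^{(\kappa)}$ to $\Omega^{(\kappa)}$. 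So it suffices to show $\Omega^{(\kappa)}$ is symmetric with respect to $w^{(\kappa)}$, and for this I would invoke the symmetry equations for second-order matrix differential operators of \cite{GrunT} (see also \cite{DuraG}). With $F_2=y(1-y)I$, $F_1=a^{(\kappa)}I$ (scalar, by the remark following \textbf{D3}) and $F_0=F^{(\kappa)}$ these read $F_2w^{(\kappa)}=w^{(\kappa)}F_2^*$, $\;2(F_2w^{(\kappa)})'=F_1w^{(\kappa)}+w^{(\kappa)}F_1^*$ and $(F_2w^{(\kappa)})''-(F_1w^{(\kappa)})'+F_0w^{(\kappa)}=w^{(\kappa)}F_0^*$, together with the boundary conditions that $F_2w^{(\kappa)}$ and $(F_2w^{(\kappa)})'-F_1w^{(\kappa)}$ vanish at $y=0,1$.

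Finally I would check these, and this is where the hypotheses enter and the argument becomes routine. The first equation and the vanishing of $F_2w^{(\kappa)}=y^{\beta+\kappa+1}(1-y)^{\alpha+\kappa+1}T^{(\kappa)}$ at the endpoints hold because $F_2$ is a real scalar, $T^{(\kappa)}$ is Hermitian and $\alpha,\beta>-1$. The second equation is the scalar Pearson identity $(y(1-y)\rho^{(\kappa)})'=a^{(\kappa)}\rho^{(\kappa)}$, whose solution is exactly $a^{(\kappa)}(y)=\beta+\kappa+1-y(\alpha+\beta+2\kappa+2)$ as in the statement; it forces $(F_2w^{(\kappa)})'=F_1w^{(\kappa)}$ identically, so the remaining boundary condition is automatic and, differentiating once more, $(F_2w^{(\kappa)})''=(F_1w^{(\kappa)})'$. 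Hence the third equation collapses to $F_0w^{(\kappa)}=w^{(\kappa)}F_0^*$, i.e., after cancelling the positive scalar $\rho^{(\kappa)}(y)$, to $F^{(\kappa)}(y)T^{(\kappa)}=T^{(\kappa)}(F^{(\kappa)}(y))^*$, which is precisely \eqref{eq:conditionT}. Thus $\Omega^{(\kappa)}$ is $w^{(\kappa)}$-symmetric, $D^{(\kappa)}$ is $W^{(\kappa)}$-symmetric, and $(W^{(\kappa)},D^{(\kappa)})$ is a MVCP. The main obstacle is the step controlling the shape of $D^{(\kappa)}$: the symmetry reduces almost formally to the two hypotheses once $a^{(\kappa)}$ and the shifted parameters $\alpha+\kappa,\beta+\kappa$ are chosen as they are, but checking that the $\Psi_0$-conjugate of $\Omega^{(\kappa)}$ is an honest matrix hypergeometric operator genuinely uses \eqref{eq:first_O_DE_Psi0} and a somewhat delicate cancellation among the three $\kappa$-dependent corrections — alternatively one sidesteps it by running the intertwining recursion of \textbf{D1} directly on the polynomial operator $D$ and then matching the coefficients of its conjugate with $a^{(\kappa)}$ and $F^{(\kappa)}$.
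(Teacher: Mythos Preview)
Your proposal is correct and follows essentially the same route as the paper: the paper's Section~2 first establishes (Proposition~2.3 and Corollary~2.4) that with the stated $a^{(\kappa)}$ and $F^{(\kappa)}$ the conjugate $\Psi_0\,\Omega^{(\kappa)}\,\Psi_0^{-1}$ is a matrix hypergeometric operator, using exactly the first-order equation \eqref{eq:first_O_DE_Psi0} for $\Psi_0$, and then (Proposition~2.5) reduces the $W^{(\kappa)}$-symmetry of $D^{(\kappa)}$ to the $w^{(\kappa)}$-symmetry of $\Omega^{(\kappa)}$, which via integration by parts collapses to the scalar Pearson identity $(y(1-y)w^{(\kappa)})'=a^{(\kappa)}w^{(\kappa)}$ together with $T^{(\kappa)}(F^{(\kappa)})^*=F^{(\kappa)}T^{(\kappa)}$. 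Your invocation of the Gr\"unbaum--Tirao/Dur\'an--Gr\"unbaum symmetry equations in place of a direct integration by parts is a cosmetic difference only.
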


\begin{remark} Using \textbf{D3} we show that several examples of MVCPs that we
obtained in \cite{vPR} admit a deformation that allows for the shift
$\partial_{x}$. Moreover, the deformations of the MVCPs of \cite{KvPR,KvPR2}
that we obtain, are the same, up to a constant, as those in \cite{KdlRR}. \end{remark}

\begin{remark} \label{rmk:weight_positive} Note that $W^{(\kappa)}>0$ almost
everywhere if and only if $T(\kappa)>0$. The conclusion that $W^{(\kappa)}>0$
almost everywhere for the case studied in \cite{KdlRR}, depends on the
LDU-decomposition of the weight. The method that we propose has the important
advantage that its nature is simpler than the deformation considered in
\cite{KdlRR}, because we go back to the spherical level, where the weight and
the differential operator are in some sense much simpler.

In fact, the matrix $T=T^{0}$ corresponding to $(G_{0},K_{0},\mu_{0})$ is a
diagonal matrix whose entries are dimensions of the irreducible $M_{0}$-modules
that occur in the restriction of $\pi^{K_{0}}_{\mu_{0}}$ to $M_{0}$ (where
$M_{0}\subset K_{0}$ is a compact subgroup that we obtain after some choices,
see \cite{vPR}). In some of the examples, where the $T^{(k)}$ can be interpreted
as the $"T^{(0)}"$ for other triples $(G_{k},K_{k},\mu_{k})$, our deformation is
really an interpolation of all these dimensions. \end{remark}

%
The fact that we pass from the level of spherical functions to the level of
polynomials, always using the same function $\Psi_{0}$, is a restriction to the
theory. However, this restriction is justified by the examples, where we also
have families $(G_{k},K_{k},\mu_{k})_{k\in\NN}$ of Gelfand pairs with specified
$K_{k}$-type $\mu_{k}$, such that $\Psi_{0}^{\mu_{k}}=\Psi_{0}^{\mu_{k_{0}}}$,
for some $k_{0}\in\NN$.

A second restriction to the theory is given by the shape of the weight matrices
on the spherical level, that we insist to be of the form (\ref{eqn: deformed
weight}) with $T^{(\kappa)}>0$ diagonal. This is justified by the same argument
as before, namely that this is what happens in the examples.\\

\textbf{Outlook.} It would be interesting to investigate the effect of other
shift operators on the MVOPs in a single variable. A possible approach is to
study the group theoretic interpretation of the shift operators for MVOPs with
geometric parameters. This may also give more insight to the result of Cantero,
Moral and Vel\'azquez \cite{CantMV} on the level of the spherical functions.

It would also be interesting to determine the generators for the algebra of
differential operators that have the spherical functions as eigenfunctions. If
the dependence of these generators on the root multiplicities is understood, one
should investigate whether the whole algebra may be deformed, instead of just
one of the differential operators.\\

\textbf{Organization of the paper.} In Section \ref{sect:Deformation} we will
work out \textbf{D1}, \textbf{D2} and derive equations on the level of the
spherical functions that will in turn yield a deformation of the given MVCP. In
view of \textbf{D3} we explain how it can be verified that the deformation
allows for the shift $\partial_{y}$. For the cases where this holds true we
derive a Rodrigues type formula.

In Section \ref{sec:MVchebyshev} we apply our construction to the family of
matrix-valued Chebyshev polynomials obtained in \cite{KvPR,KvPR2}. In Section
\ref{sec:other_examples} we apply our new method to the examples of $2\times 2$
matrix-valued orthogonal polynomials obtained in \cite{vPR}. We see that the
families related to the groups $\SU(n)$ and $\SO(n)$ are maximal in the sense
that they are closed under our deformations. On the other hand, the families
associated to the symplectic groups $\Sp(n)$ give rise to a new families with an
extra free parameter.

\section{Deformation of MVCPs} \label{sect:Deformation}

In this section we carry out \textbf{D1}-\textbf{D3} of the introduction. Let
$(w,\Omega)$ and $\Psi_{0}$ be the input data coming from the representation
theory of compact Gelfand pairs of rank one, and let $(W,D)$ be the MVCP
associated to this data.

\begin{proposition} \label{lem:derivatives} Let $Q$ be a polynomial on $\RR$
that satisfies \begin{equation}\nonumber
y(1-y)Q''(y)+Q'(y)(C-yU)-Q(y)V=\Lambda Q(y), \end{equation} where $\Lambda$ is a
constant matrix. Then the $k$-th derivative $\partial_{y}^{k}Q=Q^{(k)}$
satisfies \begin{equation}\nonumber
y(1-y)(Q^{(k)})''(y)+(Q^{(k)})'(y)(C+k-y(U+2k))-Q^{(k)}(y)(V+kU+k(k-1))=\Lambda
Q^{(k)}(y). \end{equation} \end{proposition}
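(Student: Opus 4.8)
The plan is to prove the statement by induction on $k$, where the case $k=0$ is the hypothesis. Actually, since the step $k\to k+1$ is the same computation as the step $0\to 1$ applied to $Q^{(k)}$ with the modified coefficients, the cleanest approach is to prove the single-step statement: if $R$ satisfies
$$y(1-y)R''+R'(C'-yU')-RV'=\Lambda R,$$
then $R'$ satisfies
$$y(1-y)(R')''+(R')'(C'+1-y(U'+2))-R'(V'+U')=\Lambda R',$$
and then iterate, tracking how $(C',U',V')$ evolve: $C\mapsto C+k$, $U\mapsto U+2k$, $V\mapsto V+kU+k(k-1)$ after $k$ steps. One should check that these recursions compose correctly, i.e. that applying the single step to $(C+k, U+2k, V+kU+k(k-1))$ yields $(C+k+1, U+2k+2, V+(k+1)U+(k+1)k)$; this is a short verification since $V+kU+k(k-1) + (U+2k) = V+(k+1)U+k(k-1)+2k = V+(k+1)U+k(k+1)$.

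For the single differentiation step, I would simply apply $\partial_y$ to both sides of the differential equation. Differentiating $y(1-y)R'' = (y-y^2)R''$ gives $(1-2y)R'' + y(1-y)R'''$; differentiating $R'(C'-yU')$ gives $R''(C'-yU') - R'U'$ (note $C', U'$ are constant matrices acting on the right, so $\partial_y(R'(C'-yU')) = R''(C'-yU') + R'\partial_y(C'-yU') = R''(C'-yU') - R'U'$); differentiating $-R'V'$ gives $-R''V'$; and the right side gives $\Lambda R'$. Collecting terms in $R''' , R'', R'$ and writing $S = R'$: the $S''$ term is $y(1-y)S''$; the $S'$ term is $(1-2y)S' + S'(C'-yU') - S'V' = S'\big((1-2y) + C' - yU' - V'\big)$; hmm, but the target has $S'(C'+1-y(U'+2))$ and $-S(V'+U')$ separately, so I need to be careful: actually $(1-2y)S'$ is a scalar multiple, equal to $S'(1-2y)I$, so it combines with $S'(C'-yU')$ to give $S'(C'+1 - y(U'+2))$, and the leftover $-S'V' - S U'$ — wait, $-R'U'$ has $R' = S$, not $S'$. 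Let me recompute: the term $-R'U' = -SU'$ appears, and $-R''V' = -S'V'$ appears. So the $S'$-level terms are $y(1-y)S'' + (1-2y)S' + S'(C'-yU') - S'V'$ and the $S$-level term is $-SU'$, plus right side $\Lambda S$. Moving $-S'V'$ is wrong — it should be grouped with the degree-zero term. Rearranging: $y(1-y)S'' + S'\big(C'+1-y(U'+2)\big) - S'V' - SU' = \Lambda S$. This does not immediately match because $S'V'$ sits at the first-derivative level while the target wants $V'+U'$ at the zeroth level. The resolution is that this is exactly the form of a hypergeometric-type equation where $V'$ legitimately appears multiplying $R'$ after differentiation; but Proposition~\ref{lem:derivatives} as stated has it at the zeroth level, so I must have mis-differentiated. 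Re-examining: $\partial_y(-R'V')$: since $V'$ is constant, this is $-R''V' = -S'V'$, correct. So actually the honest statement after one differentiation is $y(1-y)S'' + S'(C'+1-y(U'+2)) - S'V' - SU' = \Lambda S$, i.e. the coefficient structure forces $V' \to V'$ still multiplying $S'$. I expect the resolution is that the Proposition's coefficient conventions absorb this: comparing with target coefficients $C'' = C'+1$, $U'' = U'+2$, $V'' = ?$ — the equation $y(1-y)S''+S'(C''-yU'')-SV''=\Lambda S$ would require $-S'V' - SU' = -SV''$, impossible unless... Actually I think I need to recheck the sign/placement in the original operator $D = y(y-1)\partial_y^2 + \partial_y(C-yU) - V$; note $\partial_y(C-yU)$ as a whole operator includes $\partial_y$ acting then multiplication — so $Q D = y(y-1)Q'' + (Q(C-yU))' \cdot$... no. The hard part will be getting these operator-ordering conventions exactly right; once the bookkeeping is pinned down, the differentiation is mechanical. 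I would carefully expand $QD$ using the stated action, differentiate once, regroup, confirm the single-step recursion $(C,U,V)\mapsto(C+1,U+2,V+U)$, and then induct, verifying $V+kU+k(k-1) \xmapsto{} V+(k+1)U+k(k+1)$ which matches $V + kU + k(k-1) + (U + 2k)$. The induction is immediate once the single step is correct, so essentially all the work — and the only real obstacle — is the careful first computation with the correct reading of $\partial_y(C-yU)$ and $-V$ in the operator $D$.
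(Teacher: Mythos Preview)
Your approach by induction on $k$ is exactly the one the paper uses (its proof simply says ``The verification, which is based on induction over $k$, is left to the reader''). Your check that the recursion $(C+k,U+2k,V+kU+k(k-1))\mapsto(C+k+1,U+2(k+1),V+(k+1)U+k(k+1))$ composes correctly is also fine.

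The confusion in your single-step computation is a transcription slip: the zeroth-order term in the hypothesis is $-RV'$, not $-R'V'$. Hence differentiating it gives $-R'V'=-SV'$, not $-R''V'=-S'V'$. With this correction, setting $S=R'$, the differentiated equation reads
\[
y(1-y)S'' + (1-2y)S' + S'(C'-yU') - SU' - SV' = \Lambda S,
\]
and since $(1-2y)S'=S'(1-2y)$ is scalar, this regroups as
\[
y(1-y)S'' + S'\bigl(C'+1 - y(U'+2)\bigr) - S(V'+U') = \Lambda S,
\]
which is exactly the target with $(C',U',V')\mapsto(C'+1,U'+2,V'+U')$. There is no operator-ordering subtlety to worry about: in the Proposition the equation is written out explicitly as $Q'(y)(C-yU)$ and $-Q(y)V$, so only the product rule is needed.
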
 \begin{proof} The verification,
which is based on induction over $k$, is left to the reader. \end{proof} The
following Corollary follows immediately by analytic continuation and it settles
\textbf{D1} of our program. \begin{corollary}\label{Cor: Deformation D} Given a
differential operator $D=y(1-y)\,\partial_y^2+\partial_y\,(C-yU)-V$, the
differential operator
$D^{(\kappa)}=y(1-y)\,\partial_y^2+\partial_y\,(C^{(\kappa)}-yU^{(\kappa)})-V^{(
\kappa)}$ with $C^{(\kappa)}=C+\kappa$, $U^{(\kappa)}= U+2\kappa$ and
$V^{(\kappa)}=V+\kappa U+\kappa(\kappa-1)$, satisfies $\partial\circ
D^{(\kappa)}=D^{(\kappa+1)}\circ\partial$ for all $\kappa\ge0$. \end{corollary}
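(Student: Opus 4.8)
The plan is to reduce the whole statement to the single ``one--derivative'' identity that already underlies Proposition~\ref{lem:derivatives}, and then do a short bookkeeping check on how the parameters compose. Since $\partial\circ D^{(\kappa)}=D^{(\kappa+1)}\circ\partial$ is an identity between differential operators with polynomial coefficients, it suffices to verify it after applying both sides to an arbitrary polynomial $Q$. The key intermediate fact I would isolate first is the operator identity
\begin{equation}\nonumber
\partial_y\circ D\;=\;\widetilde D\circ\partial_y,\qquad \widetilde D:=y(1-y)\partial_y^2+\partial_y\bigl((C+1)-y(U+2)\bigr)-(V+U),
\end{equation}
valid for \emph{any} matrices $C,U,V$. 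This is exactly the $k=1$ instance of the computation in Proposition~\ref{lem:derivatives}: expanding $\partial_y\bigl(y(1-y)Q''+Q'(C-yU)-QV\bigr)$ gives $y(1-y)Q'''+(1-2y)Q''+Q''(C-yU)-Q'U-Q'V$; collecting the $Q''$ terms into $Q''\bigl((C+1)-y(U+2)\bigr)$ (the scalar $1-2y$ commuting through) and the $Q'$ terms into $-Q'(V+U)$, and writing $R=Q'$ so that $R'=Q''$, $R''=Q'''$, this is precisely $R\widetilde D$. Because for fixed $Q$ both sides are polynomials in $y$ whose coefficients are affine in the entries of $C,U,V$, the identity is legitimately an identity of operators for arbitrary $C,U,V$; in particular one may substitute $\kappa$-dependent matrices into it.

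Next I would apply this with $C,U,V$ replaced by $C^{(\kappa)}=C+\kappa$, $U^{(\kappa)}=U+2\kappa$, $V^{(\kappa)}=V+\kappa U+\kappa(\kappa-1)$. This yields $\partial_y\circ D^{(\kappa)}=\widetilde{D^{(\kappa)}}\circ\partial_y$, where $\widetilde{D^{(\kappa)}}$ has parameters $C^{(\kappa)}+1,\;U^{(\kappa)}+2,\;V^{(\kappa)}+U^{(\kappa)}$. It then remains to check these equal $C^{(\kappa+1)},U^{(\kappa+1)},V^{(\kappa+1)}$. The first two are immediate ($C+\kappa+1$, $U+2\kappa+2$). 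For the third, $V^{(\kappa)}+U^{(\kappa)}=V+\kappa U+\kappa(\kappa-1)+U+2\kappa=V+(\kappa+1)U+\bigl(\kappa(\kappa-1)+2\kappa\bigr)=V+(\kappa+1)U+\kappa(\kappa+1)=V^{(\kappa+1)}$. Hence $\widetilde{D^{(\kappa)}}=D^{(\kappa+1)}$ and the corollary follows for every $\kappa\ge0$ (indeed every $\kappa\in\CC$).

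If instead one prefers the route flagged in the sentence preceding the statement, the identity can first be obtained for $\kappa\in\NN_0$ — by iterating the one--derivative identity above, i.e.\ $\partial_y^{\,k}\circ D=D^{(k)}\circ\partial_y^{\,k}$, together with surjectivity of $\partial_y^{\,k}$ on polynomials — and then extended to all real $\kappa$ by noting that, evaluated on any monomial, both $\partial_y\circ D^{(\kappa)}$ and $D^{(\kappa+1)}\circ\partial_y$ yield polynomials in $y$ with coefficients polynomial in $\kappa$, so agreement on $\NN_0$ forces agreement in general.

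I do not expect a genuine obstacle: the content is essentially algebraic bookkeeping. The two points that require a little care are (i) the quadratic term $\kappa(\kappa-1)$ in the shift of $V$, where one must confirm the exact composition law $V^{(\kappa)}+U^{(\kappa)}=V^{(\kappa+1)}$ (no stray constant survives, precisely because $\kappa(\kappa-1)+2\kappa=\kappa(\kappa+1)$), and (ii) the licence to feed matrix-valued and $\kappa$-dependent coefficients into the identity of Proposition~\ref{lem:derivatives}; this is why I would phrase that identity at the level of differential operators with indeterminate coefficients before specialising.
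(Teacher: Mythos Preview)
Your proposal is correct and follows essentially the same route as the paper: both rest on the single one-derivative computation underlying Proposition~\ref{lem:derivatives}, and you even offer the paper's own ``integer case plus analytic continuation'' as an alternative at the end. Your primary argument is slightly slicker in that you feed the $\kappa$-dependent coefficients directly into the one-step identity (so no continuation is needed), whereas the paper phrases things as first obtaining the integer shifts from the Proposition and then extending in $\kappa$; but the underlying computation and the bookkeeping check $V^{(\kappa)}+U^{(\kappa)}=V^{(\kappa+1)}$ are identical.
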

The deformation $(w^{(\kappa)},\Omega^{(\kappa)})$ of $(w,\Omega)$ is of the
form 
\begin{equation}
\label{eq:Omega_nu_s2}
\Omega^{(\kappa)}=y(1-y)\partial_y^2+a^{(\kappa)}(y)\partial_y+F^{(\kappa)}(y),
\quad w^{(\kappa)}(y)=y^{\beta(\kappa)}(1-y)^{\alpha(\kappa)}T^{(\kappa)},
\end{equation} 
for certain functions $a^{(\kappa)}(y), F^{(\kappa)}(y)$, a
constant matrix $T^{(\kappa)}$ and constants $\alpha(\kappa),\beta(\kappa)$ such
that 
\begin{equation}\nonumber
a^{(0)}(y)=a(y),\qquad
F^{(0)}(y)=F(y),\qquad T^{(0)}=T,\qquad \alpha(0)=\alpha,\qquad\beta(0)=\beta.
\end{equation} 
This leads to a deformed pair
$(W^{(\kappa)},D^{(\kappa)})=(\Psi_0 \, w^{(\kappa)} \, \Psi_0^*, \Psi_0 \,
\Omega^{(\kappa)} \, \Psi_0^{-1})$. To see how this deformation translates to
the level of spherical functions, where it is easier to check the symmetry
conditions, we use the following result.

\begin{proposition} 
\label{prop:conditions_Omega_nu_hyperg} 
Let
$\Omega^{(\kappa)}$ be given by \eqref{eq:Omega_nu_s2}. The differential
operator $D^{(\kappa)}=\Psi_0\, \Omega^{(\kappa)} (\Psi_0)^{-1}$ is a
matrix-valued hypergeometric operator of the form
$$D^{(\kappa)}=y(1-y)\,\partial_y^2+\partial_y\,(C^{(\kappa)}-yU^{(\kappa)})-V^{(\kappa)},$$
for some constant matrices $C^{(\kappa)}, U^{(\kappa)}, V^{(\kappa)}$ if and
only if $a^{(\kappa)}$ and $F^{(\kappa)}$ are given by 
\begin{eqnarray}
F^{(\kappa)}(y) &= &-\Psi_0^{-1}\,V^{(\kappa)}\,\Psi_0 - y(1-y) \Psi_0^{-1}\,
\Psi_0''-a^{(\kappa)}\Psi_0^{-1}\Psi_0',\label{eq:Formula_F_nu}\\
a^{(\kappa)}(y) &=& (C^{(\kappa)}-yU^{(\kappa)}) -2(S+yR),\nonumber
\end{eqnarray} where $S, R$ are the matrices given in
\eqref{eq:first_O_DE_Psi0}. \end{proposition}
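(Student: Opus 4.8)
The plan is to prove the equivalence by direct computation, conjugating the differential operator $\Omega^{(\kappa)}$ on the spherical level by $\Psi_0$ and matching coefficients. First I would compute $D^{(\kappa)} = \Psi_0 \, \Omega^{(\kappa)} \, (\Psi_0)^{-1}$ explicitly. Writing $\Omega^{(\kappa)} = y(1-y)\partial_y^2 + a^{(\kappa)}(y)\partial_y + F^{(\kappa)}(y)$ and applying the conjugated operator to a test polynomial $Q$, I use that conjugation by $\Psi_0$ means $(\Psi_0 \circ \Omega^{(\kappa)} \circ \Psi_0^{-1})(Q) = \Psi_0 \cdot \Omega^{(\kappa)}(\Psi_0^{-1} Q)$, where the operator acts on the right as in \eqref{eq:Psi_d_eig_Omega}. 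Expanding $(\Psi_0^{-1}Q)'' $ and $(\Psi_0^{-1}Q)'$ via the Leibniz rule produces terms in $Q''$, $Q'$, and $Q$, with coefficients involving $\Psi_0$, $\Psi_0'$, $\Psi_0''$ and the derivatives of $\Psi_0^{-1}$, which are $(\Psi_0^{-1})' = -\Psi_0^{-1}\Psi_0'\Psi_0^{-1}$, etc. The coefficient of $Q''$ comes out to $y(1-y)$ automatically (no condition needed), so the real content is in the first- and zeroth-order coefficients.

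Next I would isolate the coefficient of $Q'$ in $D^{(\kappa)}$. It is $a^{(\kappa)}(y) + 2y(1-y)\Psi_0'\Psi_0^{-1}$ acting appropriately, but since $D^{(\kappa)}$ must have the hypergeometric form with $Q'$-coefficient $(C^{(\kappa)} - yU^{(\kappa)})$ a matrix polynomial of degree one, and since by \eqref{eq:first_O_DE_Psi0} we have $y(1-y)\Psi_0' = (S + yR)\Psi_0$, hence $y(1-y)\Psi_0'\Psi_0^{-1} = S + yR$. Substituting this gives the $Q'$-coefficient as $a^{(\kappa)}(y) + 2(S+yR)$ (being careful that $a^{(\kappa)}$ is scalar so it commutes), and equating with $C^{(\kappa)} - yU^{(\kappa)}$ yields exactly the stated relation $a^{(\kappa)}(y) = (C^{(\kappa)} - yU^{(\kappa)}) - 2(S+yR)$. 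Conversely, if $a^{(\kappa)}$ has this form then the $Q'$-coefficient is a degree-one matrix polynomial as required. I should note here that the requirement that $a^{(\kappa)}$ be scalar-valued — forced by \textbf{D1}, as the excerpt remarks — is what makes $C^{(\kappa)} - yU^{(\kappa)} - 2(S+yR)$ scalar, and this is a genuine constraint that I would either invoke from the standing hypotheses or flag.

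Then I would read off the coefficient of $Q$ (the zeroth-order term). Collecting all terms not involving derivatives of $Q$, one gets a contribution $-V^{(\kappa)}$ from $D^{(\kappa)}$ on one side, and on the spherical side a combination of $\Psi_0 F^{(\kappa)} \Psi_0^{-1}$ together with the cross terms from differentiating $\Psi_0^{-1}$ twice and once, namely $y(1-y)\Psi_0(\Psi_0^{-1})''$-type terms and $a^{(\kappa)}\Psi_0(\Psi_0^{-1})'$-type terms. Using $(\Psi_0^{-1})' = -\Psi_0^{-1}\Psi_0'\Psi_0^{-1}$ and $(\Psi_0^{-1})'' = 2\Psi_0^{-1}\Psi_0'\Psi_0^{-1}\Psi_0'\Psi_0^{-1} - \Psi_0^{-1}\Psi_0''\Psi_0^{-1}$, and conjugating back by $\Psi_0$, after simplification the condition that the $Q$-coefficient equals $-V^{(\kappa)}$ rearranges to $F^{(\kappa)}(y) = -\Psi_0^{-1} V^{(\kappa)}\Psi_0 - y(1-y)\Psi_0^{-1}\Psi_0'' - a^{(\kappa)}\Psi_0^{-1}\Psi_0'$, which is \eqref{eq:Formula_F_nu}. (A small cancellation will occur: the $\Psi_0'\Psi_0^{-1}\Psi_0'$ cross term from $(\Psi_0^{-1})''$ should cancel against part of the expansion, or combine cleanly; I would double-check this against the $\kappa=0$ case using \eqref{eq:Psi_d_eig_Omega} and \eqref{eq:intro Dmu} as a consistency check, since there $D = \Psi_0 \Omega \Psi_0^{-1}$ is already known to be hypergeometric.)

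The main obstacle I anticipate is bookkeeping: keeping the noncommutativity straight (matrices $\Psi_0$, $R$, $S$, $V^{(\kappa)}$ do not commute, whereas $a^{(\kappa)}$, $y(1-y)$ are scalar), and correctly handling that the operators act on the right while the conjugation $\Psi_0 \circ (-) \circ \Psi_0^{-1}$ wraps them. The cleanest route is probably to verify the "if" direction first — assume the formulas for $a^{(\kappa)}$ and $F^{(\kappa)}$, substitute, and check that $\Psi_0 \Omega^{(\kappa)}\Psi_0^{-1}$ collapses to $y(1-y)\partial_y^2 + \partial_y(C^{(\kappa)} - yU^{(\kappa)}) - V^{(\kappa)}$ — and then observe that the "only if" direction follows because the $\partial_y^2$, $\partial_y$, and constant coefficients of a hypergeometric operator determine it, so matching them forces the two displayed equations. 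This avoids doing the computation twice. I do not expect any deep difficulty beyond the algebra, since the structural input — the first-order equation \eqref{eq:first_O_DE_Psi0} that turns $y(1-y)\Psi_0'\Psi_0^{-1}$ into the affine matrix $S + yR$ — is exactly what linearizes everything.
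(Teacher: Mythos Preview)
Your approach is correct and essentially the same as the paper's: direct computation of the conjugated operator via the Leibniz rule, then matching the first- and zeroth-order coefficients using the first-order equation \eqref{eq:first_O_DE_Psi0}. The paper organizes the bookkeeping slightly more cleanly by expanding $(Q\Psi_0)\Omega^{(\kappa)}$ and right-multiplying by $\Psi_0^{-1}$ at the end, which avoids ever computing $(\Psi_0^{-1})''$ and hence the cross-term cancellation you anticipate; the resulting zeroth-order coefficient is simply $y(1-y)\Psi_0''\Psi_0^{-1}+a^{(\kappa)}\Psi_0'\Psi_0^{-1}+\Psi_0 F^{(\kappa)}\Psi_0^{-1}$, from which \eqref{eq:Formula_F_nu} follows immediately.
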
 \begin{proof} By a straightforward
computation, it is readily seen that $\Psi_0^{-1}\Omega^{(\kappa)}\Psi_0$ is the
differential operator \begin{multline*} y(1-y)\partial^2_y+\partial_y\,
[2y(1-y)\Psi_0'\Psi_0^{-1}+a^{(\kappa)}]+[y(1-y)\Psi_0''\Psi_0^{-1}+a^{(\kappa)}
\Psi_0'\Psi_0^{-1}+\Psi_0F^{(\kappa)}\Psi_0^{-1}]. \end{multline*} This is a
matrix-valued hypergeometric operator if and only if there exist constant
matrices $C^{(\kappa)}, U^{(\kappa)}$ and $V^{(\kappa)}$ such that
\begin{equation} 
\label{eq:Omega_D_hyperg}
2y(1-y)\Psi_0'\Psi_0^{-1}+a^{(\kappa)}= C^{(\kappa)}-yU^{(\kappa)},\quad
y(1-y)\Psi_0''\Psi_0^{-1}+a^{(\kappa)}\Psi_0'\Psi_0^{-1}+\Psi_0F^{(\kappa)}
\Psi_0^{-1}=V^{(\kappa)}. 
\end{equation} 
In the first equation of
\eqref{eq:Omega_D_hyperg} we use \eqref{eq:first_O_DE_Psi0} to obtain
$$2(S+yR)+a^{(\kappa)}(y) = (C^{(\kappa)}-yU^{(\kappa)}).$$ Finally the second
equation of \eqref{eq:Omega_D_hyperg} holds if and only if $F^{(\kappa)}$ is
given by \eqref{eq:Formula_F_nu}. \end{proof} As an immediate consequence of our
calculations, we obtain the following result, which in particular shows that
$a^{(\kappa)}$ is a scalar function, whenever we deform $D$ according to
Corollary \ref{Cor: Deformation D}.

\begin{corollary} 
\label{cor:a_scalar} 
The deformed differential operator is of the form
$$D^{(\kappa)}=y(1-y)\,\partial_y^2+\partial_y\,(C^{(\kappa)}-yU^{(\kappa)})-V^{
(\kappa)},$$ with $C^{(\kappa)}=C+\kappa$, $U^{(\kappa)}= U+2\kappa$ and
$V^{(\kappa)}=V+\kappa U+\kappa(\kappa-1)$ if and only if
$$a^{(\kappa)}(y)=a(y)+\kappa(1-2y),\qquad
F^{(\kappa)}(y)=F(y)+\kappa\Psi_0^{-1}(U+\kappa-1)\Psi_0-\kappa(1-2y)\Psi_0^{-1}
\Psi_0'.$$ \end{corollary}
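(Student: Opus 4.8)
The statement is the specialization of Proposition~\ref{prop:conditions_Omega_nu_hyperg} to the one-parameter family of hypergeometric operators singled out in Corollary~\ref{Cor: Deformation D}, so the plan is to substitute and simplify. By Proposition~\ref{prop:conditions_Omega_nu_hyperg}, the operator $D^{(\kappa)}=\Psi_0\,\Omega^{(\kappa)}\,\Psi_0^{-1}$ is matrix-valued hypergeometric with data $C^{(\kappa)},U^{(\kappa)},V^{(\kappa)}$ if and only if
\begin{align*}
a^{(\kappa)}(y)&=(C^{(\kappa)}-yU^{(\kappa)})-2(S+yR),\\
F^{(\kappa)}(y)&=-\Psi_0^{-1}V^{(\kappa)}\Psi_0-y(1-y)\Psi_0^{-1}\Psi_0''-a^{(\kappa)}\Psi_0^{-1}\Psi_0',
\end{align*}
with $S,R$ the matrices of \eqref{eq:first_O_DE_Psi0}. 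Evaluating these formulas at $\kappa=0$ identifies $a=a^{(0)}$ and $F=F^{(0)}$, and I would rewrite everything relative to these two reference coefficients.

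First I would plug $C^{(\kappa)}=C+\kappa$ and $U^{(\kappa)}=U+2\kappa$ into the first line: the $\kappa$-independent part is $a(y)=(C-yU)-2(S+yR)$ and the remainder is $\kappa-2\kappa y$, so $a^{(\kappa)}(y)=a(y)+\kappa(1-2y)$, which is scalar-valued. Next I would substitute $V^{(\kappa)}=V+\kappa U+\kappa(\kappa-1)$, together with this $a^{(\kappa)}$, into the second line. The summand $\kappa(\kappa-1)$ of $V^{(\kappa)}$ is a scalar, so $\Psi_0^{-1}\bigl(\kappa(\kappa-1)\bigr)\Psi_0=\kappa(\kappa-1)I$; collecting the $\kappa$-independent contributions into $F(y)=F^{(0)}(y)$, merging the matrix term $\kappa\,\Psi_0^{-1}U\Psi_0$ with the scalar term $\kappa(\kappa-1)I$ into a single term $\kappa\,\Psi_0^{-1}(U+\kappa-1)\Psi_0$, and keeping the extra $-\kappa(1-2y)\Psi_0^{-1}\Psi_0'$ coming from the new part of $a^{(\kappa)}$, one is left with the claimed expression for $F^{(\kappa)}$ (the one already recorded, up to grouping, in Theorem~\ref{thm:deformation}).

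Both directions of the equivalence then come for free from the biconditional in Proposition~\ref{prop:conditions_Omega_nu_hyperg}. If $D^{(\kappa)}$ is hypergeometric with data $C+\kappa,\,U+2\kappa,\,V+\kappa U+\kappa(\kappa-1)$, the two computations above produce precisely the stated $a^{(\kappa)}$ and $F^{(\kappa)}$. Conversely, if $a^{(\kappa)}$ and $F^{(\kappa)}$ are given by those formulas, then---running the same computations---they equal the right-hand sides of the displayed formulas for that particular data, so Proposition~\ref{prop:conditions_Omega_nu_hyperg} yields that $D^{(\kappa)}$ is hypergeometric with exactly that data; uniqueness of the data is clear because $C^{(\kappa)},U^{(\kappa)}$ are read off as the coefficients of the affine polynomial $a^{(\kappa)}(y)+2(S+yR)$ and then $V^{(\kappa)}$ is determined through the formula for $F^{(\kappa)}$. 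There is no real obstacle here; the whole argument is bookkeeping, the only subtlety being to isolate the scalar term $\kappa(\kappa-1)$ before absorbing it into $\Psi_0^{-1}(U+\kappa-1)\Psi_0$.
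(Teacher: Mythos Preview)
Your proposal is correct and follows exactly the approach the paper intends: the corollary is stated as ``an immediate consequence of our calculations,'' meaning one specializes the formulas of Proposition~\ref{prop:conditions_Omega_nu_hyperg} to the data $C^{(\kappa)}=C+\kappa$, $U^{(\kappa)}=U+2\kappa$, $V^{(\kappa)}=V+\kappa U+\kappa(\kappa-1)$ from Corollary~\ref{Cor: Deformation D} and subtracts the $\kappa=0$ case, which is precisely the bookkeeping you describe. Your remark that the resulting $F^{(\kappa)}$ matches the expression in Theorem~\ref{thm:deformation} is the right sanity check (note the sign in front of $\kappa\,\Psi_0^{-1}(U+\kappa-1)\Psi_0$ should indeed be negative, as in the theorem).
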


We proceed to investigate the symmetry relations for the deformed differential
operator on the level of the spherical functions.

\begin{proposition} \label{prop:symmetry_Omega_nu} Assume that the differential
operator $D^{(\kappa)}= \Psi_0\, \Omega^{(\kappa)} (\Psi_0)^{-1}$ is of the form
of Corollary  \ref{cor:a_scalar}, then $D^{(\kappa)}$ is symmetric with respect
to $W^{(\kappa)}$ if and only if \begin{equation} \label{eq:symmetry_Omega_nu_2}
(y(1-y)\,w^{(\kappa)}(y))'=a^{(\kappa)}(y)\,w^{(\kappa)}(y),\quad
T^{(\kappa)}\,(F^{(\kappa)}(y))^*=F^{(\kappa)}(y)\,T^{(\kappa)}. \end{equation}
\end{proposition}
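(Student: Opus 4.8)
The plan is to reduce the statement about symmetry of $D^{(\kappa)}$ with respect to the matrix weight $W^{(\kappa)}$ to a pair of conditions on the spherical level, exploiting the fact that $W^{(\kappa)}=\Psi_0\, w^{(\kappa)}\,\Psi_0^*$ and $D^{(\kappa)}=\Psi_0\,\Omega^{(\kappa)}\,\Psi_0^{-1}$. The key observation is that conjugation by $\Psi_0$ transforms the pairing $\langle\cdot,\cdot\rangle_{W^{(\kappa)}}$ into a pairing built from $w^{(\kappa)}$: for matrix polynomials $P,Q$ one has
\begin{equation*}
\langle P,Q\rangle_{W^{(\kappa)}}=\int_0^1 P(y)\Psi_0(y)\, w^{(\kappa)}(y)\,(Q(y)\Psi_0(y))^*\,dy,
\end{equation*}
and since $D^{(\kappa)}$ acts on the right as $\Psi_0\,\Omega^{(\kappa)}\,\Psi_0^{-1}$, the identity $\langle P D^{(\kappa)},Q\rangle_{W^{(\kappa)}}=\langle P,Q D^{(\kappa)}\rangle_{W^{(\kappa)}}$ becomes exactly the statement that $\Omega^{(\kappa)}$ is symmetric with respect to the scalar-times-diagonal weight $w^{(\kappa)}(y)\,dy=T^{(\kappa)}y^{\beta(\kappa)}(1-y)^{\alpha(\kappa)}\,dy$, now tested against the functions $P\Psi_0$ and $Q\Psi_0$. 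Because $\Psi_0$ is invertible on $(0,1)$ and the $P_d$ are a basis of matrix polynomials, the functions $P\Psi_0$ range over a set dense enough to make the symmetry of $\Omega^{(\kappa)}$ against $w^{(\kappa)}$ equivalent to the symmetry of $D^{(\kappa)}$ against $W^{(\kappa)}$.

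Next I would carry out the integration-by-parts computation for $\Omega^{(\kappa)}=y(1-y)\partial_y^2+a^{(\kappa)}(y)\partial_y+F^{(\kappa)}(y)$ acting on the right. Writing $f=P\Psi_0$, $g=Q\Psi_0$, the difference $\langle f\,\Omega^{(\kappa)},g\rangle-\langle f,g\,\Omega^{(\kappa)}\rangle$ expands, after one integration by parts on the second-order term and collecting the first-order terms, into a boundary term plus an integral whose integrand is a linear combination of $fg^*$, $f'g^*$ and $fg'^*$ with coefficients depending on $w^{(\kappa)}$, $(y(1-y)w^{(\kappa)})'$, $a^{(\kappa)}w^{(\kappa)}$, and $F^{(\kappa)}$. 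The standard bookkeeping (exactly as in the scalar Sturm--Liouville case, but keeping track of matrix ordering) shows the $f'g^*$ and $fg'^*$ terms vanish precisely when $(y(1-y)w^{(\kappa)}(y))'=a^{(\kappa)}(y)w^{(\kappa)}(y)$ — note this forces $a^{(\kappa)}$ to be scalar, consistent with Corollary~\ref{cor:a_scalar} — and that same condition makes the boundary term vanish when $\alpha(\kappa),\beta(\kappa)>-1$. The remaining $fg^*$ term contributes $\int_0^1 f(y)\bigl(F^{(\kappa)}(y)w^{(\kappa)}(y)-w^{(\kappa)}(y)(F^{(\kappa)}(y))^*\bigr)g(y)^*\,dy$ up to the scalar factor, where I have used that $w^{(\kappa)}(y)=y^{\beta(\kappa)}(1-y)^{\alpha(\kappa)}T^{(\kappa)}$ with $T^{(\kappa)}$ constant, so this vanishes for all $f,g$ iff $F^{(\kappa)}(y)T^{(\kappa)}=T^{(\kappa)}(F^{(\kappa)}(y))^*$, which is the second condition in \eqref{eq:symmetry_Omega_nu_2}.

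The main obstacle, and the point that needs care rather than mere routine, is justifying the passage between the two levels in both directions and handling the boundary terms. For the forward direction one simply substitutes; for the converse one must argue that symmetry of $D^{(\kappa)}$ against $W^{(\kappa)}$ — a priori only known when tested on matrix polynomials — already forces the two pointwise identities in \eqref{eq:symmetry_Omega_nu_2}, and here one uses that the maps $P\mapsto P\Psi_0$ and $P\mapsto (P\Psi_0)'$ have images spanning enough of $C^\infty((0,1);\CC^{N\times N})$ to separate the relevant coefficient matrices, together with the invertibility of $\Psi_0$ on the open interval. One also needs the weight exponents $\alpha(\kappa)=\alpha+\kappa$ and $\beta(\kappa)=\beta+\kappa$ to be $>-1$ so that the boundary contributions from integration by parts genuinely vanish; since $\kappa\ge 0$ and $\alpha,\beta>-1$ in the group cases this is automatic. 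Modulo these justifications the proposition follows from the displayed computation, so I would present the integration-by-parts identity in full and then dispatch both implications in a short paragraph.
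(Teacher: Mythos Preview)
Your proposal is correct and follows essentially the same approach as the paper's proof: both transfer the symmetry condition for $D^{(\kappa)}$ against $W^{(\kappa)}$ to a symmetry condition for $\Omega^{(\kappa)}$ against $w^{(\kappa)}$ via the conjugation $D^{(\kappa)}=\Psi_0\,\Omega^{(\kappa)}\,\Psi_0^{-1}$ and $W^{(\kappa)}=\Psi_0\,w^{(\kappa)}\,\Psi_0^*$, and then use integration by parts to extract the two conditions in \eqref{eq:symmetry_Omega_nu_2}. If anything, your discussion of the boundary terms and the density argument for the converse direction is more explicit than the paper's, which simply asserts the equivalence ``follows from integration by parts.''
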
 \begin{proof} Let $(P_d)$ be a sequence of MVOPs with respect
to $W^{(\kappa)}$ and let $\Psi_d^{(\kappa)}=P_d\Psi_0$. The operator
$D^{(\kappa)}$ is symmetric with respect to $W^{(\kappa)}$ if and only if
\begin{align*} \int_0^1 (\Psi_d^{(\kappa)}\Omega^{(\kappa)})(y)\, T^{(\kappa)}
\, (\Psi_{d'}^{(\kappa)})^* (1-y)^{\alpha(\kappa)} y^{\beta(\kappa)} dy
&=\int_0^1 (P_d  D^{(\kappa)})(y) \, W^{(\kappa)}(y)  \, (P_{d'})^*  dy\\
&=\int_0^1 P_d(y) \, W^{(\kappa)}(y)  \, (P_{d'}D^{(\kappa)})^*  dy\\ &=\int_0^1
\Psi_d^{(\kappa)}(y)\, T^{(\kappa)} \, (\Psi_{d'}^{(\kappa)}\Omega^{(\kappa)})^*
(1-y)^{\alpha(\kappa)} y^{\beta(\kappa)} dy, \end{align*} where we used the
symmetry condition \eqref{eq:Symmetry_D_Pd} in the second and third equation. In
other words, $D^{(\kappa)}$ is symmetric with respect to $W^{(\kappa)}$ if and
only if 
\begin{equation}
\label{eq:symmetry_Omega_Psid}
\int_0^1 (\Psi_d^{(\kappa)}\Omega^{(\kappa)})(y)\, T^{(\kappa)} \, (\Psi_d^{(\kappa)})^*
(1-y)^{\alpha(\kappa)} y^{\beta(\kappa)}dy = \int_0^1 \Psi_d^{(\kappa)}(y)\,
T^{(\kappa)} \, (\Psi_d^{(\kappa)}\Omega^{(\kappa)})^* (1-y)^{\alpha(\kappa)}
y^{\beta(\kappa)} dy. 
\end{equation} 
It follows from integration by parts that 
\eqref{eq:symmetry_Omega_Psid} holds true for all $d,d'\in \mathbb{N}$ if and
only if 
\begin{align} 
\label{eq:symmetry_Omega_nu_1}
(y(1-y)\,w^{(\kappa)}(y))'=a^{(\kappa)}(y)\,w^{(\kappa)}(y),\qquad
T^{(\kappa)}\,(F^{(\kappa)}(y))^*=F^{(\kappa)}(y)\,T^{(\kappa)}. 
\end{align}
This completes the proof of the proposition.
\end{proof}

\begin{remark} \label{rmk:TF=F*T_tridiag} If $F^{(\kappa)}$ is a tridiagonal
matrix and $F^{(\kappa)}_{i,i+1}/F^{(\kappa)}_{i+1,i}$ is constant for all
$i=0,\ldots,N$, then then the condition on the right of
\eqref{eq:symmetry_Omega_nu_2} is equivalent to
$$T^{(\kappa)}_{i+1,i+1}=\frac{F^{(\kappa)}_{i+1,i}}{F^{(\kappa)}_{i,i+1}} \,
T^{(\kappa)}_{i,i},$$ so that  $T^{(\kappa)}$ is determined up to a constant
factor. \end{remark}

Note that (\ref{eq:symmetry_Omega_nu_1}) for $\kappa=0$ implies
$a(y)=1+\beta-(2+\beta-\alpha)y$, so by (\ref{eq:symmetry_Omega_nu_2}) we have
$a^{\kappa}(y)=1+\beta+\kappa-(2+2\kappa+\beta-\alpha)y$. Again by
(\ref{eq:symmetry_Omega_nu_2}) we find $\alpha(\kappa)=\alpha+\kappa$ and
$\beta(\kappa)=\beta+\kappa$.

We have now settled step \textbf{D2} of our program. The question remains,
whether the deformation $(W^{(\kappa)},D^{(\kappa)})$ allows for the shift $\partial_y$.
In other words, we need to determine whether the  sequence of derivatives $(\partial_y P_n^{(\kappa)})$ is orthogonal with respect to $W^{(\kappa+1)}$. In \cite{CantMV} Cantero, Moral, Vel\'azquez
characterized the sequences of MVOPs whose derivatives are also
orthogonal. If there exist matrix polynomials
$\Gamma_2^{(\kappa)}$ and $\Gamma_1^{(\kappa)}$  of degree two and one
respectively such that \begin{equation} \label{eq:relation_W_Gamma2}
(W^{(\kappa)}(y)\Gamma^{(\kappa)}_2(y))'=W^{(\kappa)}(y)\Gamma_1^{(\kappa)}(y),
\end{equation} then the sequence of derivatives $(\partial_y P_n^{(\kappa)})$ is
orthogonal with respect to $W^{(\kappa)}(y)\Gamma_2^{(\kappa)}$. In our case,
using the expression \eqref{eqn: deformed weight} for the weight matrices, we
have that if \begin{align} \label{eq:def_Gamma2}
\Gamma_2^{(\kappa)}(y)&=y(1-y)(\Psi_0(y)^*)^{-1} \,
(T^{(\kappa)})^{-1}T^{(\kappa+1)} \, \Psi_0(y)^*, \\ \label{eq:def_Gamma1}
\Gamma_1^{(\kappa)}(y)&=(W^{(\kappa)}(y))^{-1}(W^{(\kappa+1)}(y))' \end{align}
are polynomials of degree two and one respectively, then sequence $(\partial_y
P_n^{(\kappa)})$ is orthogonal with respect to
$W^{(\kappa+1)}=W^{(\kappa)}\Gamma_2^{(\kappa)}$. We can rewrite
\eqref{eq:relation_W_Gamma2} in terms of differential operators, as stated in
the following proposition which summarizes this discussion. 
\begin{proposition}
\label{eq:D^nu_Gamma2_Gamma1} 
The following are equivalent. \begin{enumerate}
\item The deformed pair $(W^{(\kappa)},D^{(\kappa)})$ allows for the shift $\partial_y$.
\item $\Gamma_2^{(\kappa)}$ is a matrix polynomial
of degree two and $\Gamma_1^{(\kappa)}$ is a matrix polynomial of degree one.
\item The differential operator 
\begin{equation*}
D^{(\kappa)}_{(\Gamma_2,\Gamma_1)}=\frac{d^2}{dy^2}
\Gamma_2^{(\kappa)}(y)^*+\frac{d}{dy} \Gamma_1^{(\kappa)}(y)^*
\end{equation*}
is symmetric with respect to $W^{(\kappa)}$ and has the polynomials
$P_d^{(\kappa)}$ as eigenfunctions. \end{enumerate} 
\end{proposition}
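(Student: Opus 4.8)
The plan is to establish the implications $(2)\Rightarrow(3)$, $(3)\Rightarrow(2)$, $(2)\Rightarrow(1)$ and $(1)\Rightarrow(2)$. The backbone of the argument is the elementary identity
\[
W^{(\kappa)}(y)\,\Gamma_2^{(\kappa)}(y)=W^{(\kappa+1)}(y),
\]
immediate from \eqref{eq:def_Gamma2} and \eqref{eq:Definition_Wnu_general}, since the Jacobi prefactors multiply to $(1-y)^{\alpha+\kappa+1}y^{\beta+\kappa+1}$ and the $\Psi_0$--factors telescope. Two consequences will be used throughout. First, as $\Gamma_1^{(\kappa)}$ is defined in \eqref{eq:def_Gamma1} to be $(W^{(\kappa)})^{-1}(W^{(\kappa+1)})'$, the Pearson relation \eqref{eq:relation_W_Gamma2} holds \emph{identically} on $(0,1)$ for these particular $\Gamma_1^{(\kappa)},\Gamma_2^{(\kappa)}$; hence the only content of (2) is that the a priori merely real-analytic matrix functions $\Gamma_1^{(\kappa)},\Gamma_2^{(\kappa)}$ are in fact polynomials of the asserted degrees. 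Second, $W^{(\kappa)}$ and $W^{(\kappa+1)}$ are Hermitian (the factors $T^{(\kappa)},T^{(\kappa+1)}$ being real diagonal), so taking adjoints and using that $W^{(\kappa)},W^{(\kappa+1)},(W^{(\kappa+1)})'$ are Hermitian gives $\Gamma_2^{(\kappa)}(y)^*W^{(\kappa)}(y)=W^{(\kappa+1)}(y)$ and $\Gamma_1^{(\kappa)}(y)^*W^{(\kappa)}(y)=W^{(\kappa)}(y)\Gamma_1^{(\kappa)}(y)=(W^{(\kappa+1)})'(y)$.

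Assume (2). Then $D^{(\kappa)}_{(\Gamma_2,\Gamma_1)}$ maps $\CC^{N\times N}[y]$ into itself and is of the form $\partial_y^2F_2+\partial_yF_1+F_0$ with $\deg F_i\le i$ and $F_0=0$. Using the two identities just recorded and integrating by parts (the boundary term vanishing because $W^{(\kappa+1)}$ vanishes at $y=0,1$, the exponents $\alpha+\kappa+1,\beta+\kappa+1$ being positive) one finds, for matrix polynomials $P,Q$,
\[
\langle P\,D^{(\kappa)}_{(\Gamma_2,\Gamma_1)},Q\rangle_{W^{(\kappa)}}
=\int_0^1\!\big(P''W^{(\kappa+1)}+P'(W^{(\kappa+1)})'\big)Q^*\,dy
=-\int_0^1\!P'\,W^{(\kappa+1)}\,(Q')^*\,dy ,
\]
which is conjugate-symmetric under $P\leftrightarrow Q$; hence $D^{(\kappa)}_{(\Gamma_2,\Gamma_1)}$ is symmetric with respect to $W^{(\kappa)}$, $(W^{(\kappa)},D^{(\kappa)}_{(\Gamma_2,\Gamma_1)})$ is a matrix-valued classical pair, and \cite[Prop.~2.10]{GrunT} gives that the $P^{(\kappa)}_d$ are its eigenfunctions: this is (3). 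Running the same integration by parts with $P=P^{(\kappa)}_n$, $Q=P^{(\kappa)}_m$ and the derivative moved onto $P$ instead of $Q$ yields
\[
\int_0^1(\partial_yP^{(\kappa)}_n)\,W^{(\kappa+1)}\,(\partial_yP^{(\kappa)}_m)^*\,dy
=-\langle P^{(\kappa)}_n\,D^{(\kappa)}_{(\Gamma_2,\Gamma_1)},\,P^{(\kappa)}_m\rangle_{W^{(\kappa)}} ;
\]
since $P^{(\kappa)}_n\,D^{(\kappa)}_{(\Gamma_2,\Gamma_1)}$ has degree $\le n$, the right-hand side vanishes for $m>n$, and after conjugating (and using that $W^{(\kappa+1)}$ is Hermitian) also for $m<n$. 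As $\partial_yP^{(\kappa)}_d$ has degree $d-1$ and invertible leading coefficient, and $W^{(\kappa+1)}$ is a genuine weight once $T^{(\kappa+1)}>0$ (cf.\ Remark~\ref{rmk:weight_positive}), this shows $(\partial_yP^{(\kappa)}_d)_{d\ge1}$ is a family of MVOPs for $W^{(\kappa+1)}$, i.e.\ (1).

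The implication $(3)\Rightarrow(2)$ is immediate: if $D^{(\kappa)}_{(\Gamma_2,\Gamma_1)}$ maps matrix polynomials to matrix polynomials, then applying it to $yI$ and $y^2I$ forces $\Gamma_1^{(\kappa)*}$ and then $\Gamma_2^{(\kappa)*}$ to be polynomials, and the bound $\deg\big(P^{(\kappa)}_d\,D^{(\kappa)}_{(\Gamma_2,\Gamma_1)}\big)\le d$ required for the $P^{(\kappa)}_d$ to be eigenfunctions bounds their degrees by $1$ and $2$. It remains to prove $(1)\Rightarrow(2)$, and this is the one step that is not a direct computation; here we invoke the characterization of Cantero, Moral and Vel\'azquez \cite{CantMV}. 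In its ``only if'' form it states that if the differentiated sequence $(\partial_yP^{(\kappa)}_d)$ is orthogonal with respect to $W^{(\kappa+1)}$, then there exist matrix polynomials $\Gamma_2,\Gamma_1$ of degrees at most two and one with $(W^{(\kappa)}\Gamma_2)'=W^{(\kappa)}\Gamma_1$ and $W^{(\kappa)}\Gamma_2=W^{(\kappa+1)}$; since $W^{(\kappa)}$ is invertible on $(0,1)$ (as $T^{(\kappa)}>0$ and $\Psi_0$ is invertible there, \cite[Cor.~3.4]{vPR}), these relations force $\Gamma_2=\Gamma_2^{(\kappa)}$ and $\Gamma_1=\Gamma_1^{(\kappa)}$, which is (2). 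The main obstacle is thus exactly this last appeal to \cite{CantMV}: it is this result that converts the bare orthogonality of the derivatives into the polynomiality of the a priori transcendental functions $\Gamma_i^{(\kappa)}$ built from $\Psi_0$, $\Psi_0'$ and the diagonal matrices $T^{(\kappa)},T^{(\kappa+1)}$ --- and it is precisely this polynomiality that must be verified by hand in the examples, which is step \textbf{D3}.
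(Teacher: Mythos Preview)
Your argument is correct and in fact cleaner than the paper's own proof for the ``soft'' implications. The paper establishes the cycle $1\Leftrightarrow 2$ by citing \cite{CantMV}, then $2\Rightarrow 3$ by invoking the symmetry criterion of \cite[Thm.~3.1]{DuraG}, and $3\Rightarrow 1$ by quoting \cite[Prop.~2.6]{GrunT} for the degree bounds and arguing as in \cite[Prop.~3.3]{KdlRR}. You instead carry out the single integration-by-parts identity
\[
\langle P\,D^{(\kappa)}_{(\Gamma_2,\Gamma_1)},Q\rangle_{W^{(\kappa)}}=-\int_0^1 P'\,W^{(\kappa+1)}\,(Q')^*\,dy,
\]
which simultaneously yields the symmetry in $(3)$ and, specialised to $P=P^{(\kappa)}_n$, $Q=P^{(\kappa)}_m$, the orthogonality in $(1)$; this replaces the external citations \cite{DuraG}, \cite{KdlRR} by a two-line computation. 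Your $(3)\Rightarrow(2)$ is likewise direct: the eigenfunction equation for $P^{(\kappa)}_1$ and $P^{(\kappa)}_2$ forces $\Gamma_1^{(\kappa)*}$ and $\Gamma_2^{(\kappa)*}$ to be polynomials of degree $\le 1$ and $\le 2$ (your phrasing ``apply to $yI$ and $y^2I$'' is slightly loose since those are not eigenfunctions, but the argument via $P^{(\kappa)}_1,P^{(\kappa)}_2$ is immediate once one observes that the $P^{(\kappa)}_d$ span the matrix polynomials, so the eigenfunction hypothesis forces the operator to preserve polynomials). The one implication where both proofs coincide is $(1)\Rightarrow(2)$: here the result of Cantero--Moral--Vel\'azquez is genuinely needed to convert orthogonality of the derivatives into polynomiality of the a priori transcendental $\Gamma_i^{(\kappa)}$, and you identify this correctly as the substantive step.
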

\begin{proof} 1 $\iff$ 2 follows from \cite{CantMV}.

2 $\Rightarrow$ 3: If the sequence $(\partial_y P_d^{(\kappa)})$ is orthogonal
with respect to $W^{(\kappa+1)}$, then the equations
\eqref{eq:relation_W_Gamma2} hold true for polynomials $\Gamma_2^{(\kappa)}$ and
$\Gamma_1^{(\kappa)}$ of degrees two and one respectively, see \cite[Thm.
3.14]{CantMV}. A simple computation shows that \eqref{eq:relation_W_Gamma2} implies
the conditions for the symmetry of  \cite[Thm. 3.1]{DuraG}, we omit the proof.

3 $\Rightarrow$ 1: Now assume that $D^{(\kappa)}_{(\Gamma_2,\Gamma_1)}$ is
symmetric with respect to $W^{(\kappa)}$ and has the polynomials
$P_n^{(\kappa)}$ as eigenfunctions. The proof is completely
analogous to \cite[Proposition 3.3]{KdlRR}. Since
$D^{(\kappa)}_{(\Gamma_2,\Gamma_1)}$ is symmetric, $\deg
\Gamma_2^{(\kappa)} = 2$, $\deg \Gamma_1^{(\kappa)} = 1$ (see
\cite[Proposition 2.6]{GrunT}) and the conditions 
\begin{align*}
(\Gamma_2^{(\kappa)}(y)^{*}W^{(\kappa)}(y))''-(\Gamma_1^{(\kappa)}(y)^{*}
W^{(\kappa)}(y))'=0, \,\, \lim_{y\to
0,1}\left(\Gamma_2^{(\kappa)}(y)^{*}W^{(\kappa)}(y)\right)'
-\Gamma_1^{(\kappa)}(y)^* W^{(\kappa)}(y)=0, 
\end{align*} 
hold. If we integrate
with respect to $y$ we obtain
$(\Gamma_2^{(\kappa)}(y)^{*}W^{(\kappa)}(y))'=\Gamma_1^{(\kappa)}(y)^{*}
W^{(\kappa)}(y)$ which implies \eqref{eq:relation_W_Gamma2}. \end{proof} We have now
settled all the steps \textbf{D1}-\textbf{D3} of our program. We lack a group
theoretical interpretation for the matrix polynomials $\Gamma^{(\kappa)}_2$ and
$\Gamma^{(\kappa)}_1$. \\

\textbf{Rodrigues formula.} Now we assume that  Proposition \ref{eq:D^nu_Gamma2_Gamma1} 
holds true. One of the main consequences is the existence of a Rodrigues formula for the
MVOPs. This fact was already noticed in \cite[Theorem 3.1, (iii)]{KdlRR} and the proof in our case follows the
same lines. We take the Hilbert
$\mathrm{C}^*$-module  $\mathcal{H}^{(\kappa)}$ given by the completion of the
space of matrix-valued orthogonal polynomials with respect to the matrix-valued
inner product $\langle\cdot,\cdot\rangle^{(\kappa)}$ defined by $W^{(\kappa)}$.
The lowering operator $\partial_y:\mathcal{H}^{(\kappa)}\to\mathcal{H}^{(\kappa+1)}$ is an
unbounded operator with dense domain and dense range. This operator has an adjoint, which is a raising operator preserving polynomials, that can be calculated explicitly in terms of $\Gamma_2^{(\kappa)}$ and $\Gamma_1^{(\kappa)}$.
\begin{lemma} 
Let $\kappa>0$ and define the first order
differential operator $\Xi^{(\kappa)}$ given by 
$$Q\Xi^{(\kappa)}=\partial_y
Q(\Gamma_2^{(\kappa)})^*+Q(\Gamma_1^{(\kappa)})^*.$$ 
Then $\langle \partial_y
P,Q\rangle^{(\kappa+1)}=-\langle P,Q\Xi^{(\kappa)}\rangle^{(\kappa)}$ for
matrix-valued polynomials $P$ and $Q$. 
\end{lemma}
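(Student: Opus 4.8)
The plan is to read the claimed identity as a single integration by parts on $[0,1]$, with the boundary contributions killed by the Jacobi factors in the weight. Writing both pairings from their definitions gives
$$\langle \partial_y P,Q\rangle^{(\kappa+1)}=\int_0^1 P'(y)\,W^{(\kappa+1)}(y)\,Q(y)^*\,dy,\qquad \langle P,Q\Xi^{(\kappa)}\rangle^{(\kappa)}=\int_0^1 P(y)\,W^{(\kappa)}(y)\,(Q\Xi^{(\kappa)})(y)^*\,dy.$$
First I would substitute $W^{(\kappa+1)}=W^{(\kappa)}\Gamma_2^{(\kappa)}$ in the left-hand integral; this is exactly \eqref{eq:def_Gamma2}, and it is the relation under which Proposition \ref{eq:D^nu_Gamma2_Gamma1} is being assumed here. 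Integrating by parts then moves $\partial_y$ off $P$, producing the boundary term $\big[P\,W^{(\kappa+1)}\,Q^*\big]_0^1$ together with $-\int_0^1 P\,\big(W^{(\kappa)}\Gamma_2^{(\kappa)}Q^*\big)'\,dy$.

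Next I would expand $\big(W^{(\kappa)}\Gamma_2^{(\kappa)}Q^*\big)'$ by the Leibniz rule and use the defining relation $\big(W^{(\kappa)}\Gamma_2^{(\kappa)}\big)'=W^{(\kappa)}\Gamma_1^{(\kappa)}$ of \eqref{eq:relation_W_Gamma2}, obtaining $W^{(\kappa)}\Gamma_1^{(\kappa)}Q^*+W^{(\kappa)}\Gamma_2^{(\kappa)}(Q')^*$. The rest is purely formal: since $Q\Xi^{(\kappa)}=Q'(\Gamma_2^{(\kappa)})^*+Q(\Gamma_1^{(\kappa)})^*$ by definition of $\Xi^{(\kappa)}$, taking the conjugate transpose exchanges the two factors in each summand, so $(Q\Xi^{(\kappa)})^*=\Gamma_2^{(\kappa)}(Q')^*+\Gamma_1^{(\kappa)}Q^*$. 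Hence the remaining integral is precisely $-\langle P,Q\Xi^{(\kappa)}\rangle^{(\kappa)}$, and the lemma follows provided the boundary term $\big[P\,W^{(\kappa+1)}\,Q^*\big]_0^1$ vanishes.

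It remains to justify that boundary term, and this is the only step that is not bookkeeping. Here $P,Q$ are matrix polynomials, hence bounded near $y=0$ and $y=1$, while $W^{(\kappa+1)}(y)=(1-y)^{\alpha+\kappa+1}y^{\beta+\kappa+1}\,\Psi_0(y)\,T^{(\kappa+1)}\,\Psi_0(y)^*$ and $\Psi_0$ has at most algebraic growth at the two endpoints, as follows from the first-order equation \eqref{eq:first_O_DE_Psi0} (see \cite{vPR}); for $\kappa>0$ the Jacobi exponents $\alpha+\kappa+1$ and $\beta+\kappa+1$ dominate that growth, so $W^{(\kappa+1)}(y)\to 0$ as $y\to 0,1$ and the boundary term drops out. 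The delicate point is pinning down the singular exponents of $\Psi_0$ at the endpoints and checking that the hypothesis $\kappa>0$ (rather than $\kappa\ge 0$) really suffices; alternatively, one may observe that this is the same boundary behaviour already invoked for the symmetry statement in Proposition \ref{eq:D^nu_Gamma2_Gamma1}, cf.\ \cite{CantMV, KdlRR}.
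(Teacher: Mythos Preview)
Your proof is correct and is precisely the integration-by-parts argument that the paper has in mind: the paper's own proof is just the one-line reference ``analogous to \cite[Corollary 2.5]{KdlRR}'', and that corollary is proved exactly as you describe. One small simplification for the boundary term: under the standing assumption that Proposition~\ref{eq:D^nu_Gamma2_Gamma1} holds, $\Gamma_2^{(\kappa)}$ is a polynomial and $W^{(\kappa+1)}=W^{(\kappa)}\Gamma_2^{(\kappa)}$, while $W^{(\kappa)}(y)=(1-y)^{\alpha+\kappa}y^{\beta+\kappa}W_{pol}^{(\kappa)}(y)$ with $W_{pol}^{(\kappa)}=\Psi_0 T^{(\kappa)}\Psi_0^*$ polynomial (this is the meaning of the subscript in \eqref{eq:structure_W_group_level}); hence $W^{(\kappa+1)}$ is a polynomial times $(1-y)^{\alpha+\kappa}y^{\beta+\kappa}$ and vanishes at $0$ and $1$ already for $\kappa\ge 0$, so no delicate analysis of the singular exponents of $\Psi_0$ is needed.
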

\begin{proof}
This is analogous to \cite[Corollary 2.5]{KdlRR}.
\end{proof}

\begin{theorem} 
\label{thm:Rodrigues_formula} 
Let $\kappa>0$ and let $(Q_n^{(\kappa)}))_n$ be the sequence of monic orthogonal polynomials with respect to $W^{(\kappa)}$. Then there exists a constant matrices $G^{(\kappa)}_n$ 
such that
\begin{equation} 
\label{eq:Rodrigues_general} 
Q_n(y)=G_n^{(\kappa)}(\partial_y^n \, W^{(\kappa+n)}) \, (W^{(\kappa)}(y))^{-1}. 
\end{equation} 
for all $n\in\mathbb{N}$.
\end{theorem}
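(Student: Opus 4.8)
The plan is to iterate the lowering–raising duality of the preceding Lemma. By that Lemma, for each $\kappa > 0$ the operator $\partial_y \colon \mathcal H^{(\kappa)} \to \mathcal H^{(\kappa+1)}$ has adjoint $\Xi^{(\kappa)}$, the first-order operator acting on the right by $Q\Xi^{(\kappa)} = \partial_y Q\,(\Gamma_2^{(\kappa)})^* + Q\,(\Gamma_1^{(\kappa)})^*$. First I would iterate: composing $\Xi^{(\kappa)} \circ \Xi^{(\kappa+1)} \circ \cdots \circ \Xi^{(\kappa+n-1)}$ produces an $n$-th order operator whose adjoint is $\partial_y^n \colon \mathcal H^{(\kappa)} \to \mathcal H^{(\kappa+n)}$. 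Concretely, I would show by induction on $n$ that
\begin{equation}
\nonumber
\langle \partial_y^n P, Q\rangle^{(\kappa+n)} = (-1)^n \langle P, Q\,\Xi^{(\kappa)}\Xi^{(\kappa+1)}\cdots\Xi^{(\kappa+n-1)}\rangle^{(\kappa)},
\end{equation}
for all matrix polynomials $P,Q$, using the base case $n=1$ from the Lemma and stepping up via $\partial_y^{n} = \partial_y \circ \partial_y^{n-1}$ together with the adjoint relation at level $\kappa+n-1$.

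Next I would pin down $Q_n^{(\kappa)}$ from this pairing. The key observation is that $\partial_y^n$ annihilates every polynomial of degree $< n$, so the relation above says that $Q\,\Xi^{(\kappa)}\cdots\Xi^{(\kappa+n-1)}$ is orthogonal in $\mathcal H^{(\kappa)}$ to all polynomials of degree $< n$ whenever $\deg Q \le n$; taking $Q$ to be a constant matrix $C$, the matrix polynomial $C\,\Xi^{(\kappa)}\cdots\Xi^{(\kappa+n-1)}$ has degree exactly $n$ (each $\Xi$ raises degree by one, since $\deg \Gamma_2^{(\kappa)} = 2$) and is orthogonal to all lower-degree polynomials, hence is a left-multiple of the monic $Q_n^{(\kappa)}$. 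Thus $Q_n^{(\kappa)}(y) = G_n^{(\kappa)}\,\big(C\,\Xi^{(\kappa)}\cdots\Xi^{(\kappa+n-1)}\big)(y)$ for a suitable constant matrix $G_n^{(\kappa)}$ and any fixed invertible constant $C$; I would simply take $C = I$.

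Finally I would unwind the composition $I\cdot\Xi^{(\kappa)}\cdots\Xi^{(\kappa+n-1)}$ into the closed form in \eqref{eq:Rodrigues_general}. The mechanism is the same telescoping identity that underlies the scalar Rodrigues formula: the defining relation \eqref{eq:relation_W_Gamma2}, namely $(W^{(\kappa)}\Gamma_2^{(\kappa)})' = W^{(\kappa)}\Gamma_1^{(\kappa)}$ together with $W^{(\kappa)}\Gamma_2^{(\kappa)} = W^{(\kappa+1)}$ (from \eqref{eq:def_Gamma2}, since $W^{(\kappa+1)} = W^{(\kappa)}\Gamma_2^{(\kappa)}$), lets one rewrite, for any polynomial $R$,
\begin{equation}
\nonumber
\big(R\,\Xi^{(\kappa)}\big)(y)^*\,W^{(\kappa)}(y) = \partial_y\!\left(R(y)^*\,W^{(\kappa+1)}(y)\right),
\end{equation}
after transposing and using the product rule; iterating this $n$ times starting from $R = I$ gives $\big(\Xi^{(\kappa)}\cdots\Xi^{(\kappa+n-1)}\big)(y)^*\,W^{(\kappa)}(y) = \partial_y^n\!\left(W^{(\kappa+n)}(y)\right)$, and taking conjugate transpose and multiplying by $(W^{(\kappa)}(y))^{-1}$ on the right yields \eqref{eq:Rodrigues_general} (absorbing a conjugate transpose into the constant $G_n^{(\kappa)}$). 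The main obstacle is bookkeeping rather than conceptual: one must check that the identity $\big(R\,\Xi^{(\kappa)}\big)^*W^{(\kappa)} = \partial_y(R^*W^{(\kappa+1)})$ holds with exactly the $\Gamma_i^{(\kappa)}$ produced by \eqref{eq:def_Gamma2}–\eqref{eq:def_Gamma1} — i.e. that the adjoint computed abstractly via the Hilbert $\mathrm C^*$-module structure matches the one computed by integration by parts against the explicit weight — and that the resulting expression is genuinely polynomial of degree $n$ at each stage, which is where Proposition \ref{eq:D^nu_Gamma2_Gamma1} (assumed to hold) is used to guarantee $\deg\Gamma_2^{(\kappa)} = 2$ and $\deg\Gamma_1^{(\kappa)} = 1$. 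I would also note at the end that $G_n^{(\kappa)}$ is forced to be invertible, since the right-hand side of \eqref{eq:Rodrigues_general} has degree exactly $n$ with an invertible leading coefficient.
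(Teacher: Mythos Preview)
Your approach is correct and is essentially the one the paper intends: the paper's own proof consists of the single line ``analogous to \cite[Theorem 3.1, (iii)]{KdlRR}'', and what you have written is precisely that standard argument --- iterate the adjoint relation from the preceding Lemma to see that the $n$-fold raising operator applied to the identity yields a degree-$n$ polynomial orthogonal to all lower degrees, then telescope using $W^{(\kappa)}\Gamma_2^{(\kappa)}=W^{(\kappa+1)}$ and $W^{(\kappa)}\Gamma_1^{(\kappa)}=(W^{(\kappa+1)})'$ to obtain the closed form.

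Two small bookkeeping corrections, both of the sort you anticipated. First, the composition order is reversed: since $\Xi^{(j)}$ maps $\mathcal H^{(j+1)}\to\mathcal H^{(j)}$, the adjoint of $\partial_y^n:\mathcal H^{(\kappa)}\to\mathcal H^{(\kappa+n)}$ acts on a constant $C\in\mathcal H^{(\kappa+n)}$ as $C\,\Xi^{(\kappa+n-1)}\Xi^{(\kappa+n-2)}\cdots\Xi^{(\kappa)}$, not in the order you wrote. Second, the telescoping identity holds without the stars on $R$ (or equivalently with the weight on the left): the correct version is $(R\,\Xi^{(\kappa)})\,W^{(\kappa)}=\partial_y\bigl(R\,W^{(\kappa+1)}\bigr)$, which follows directly from the product rule and the two relations above; iterating from $R=I$ gives $\bigl(I\,\Xi^{(\kappa+n-1)}\cdots\Xi^{(\kappa)}\bigr)W^{(\kappa)}=\partial_y^n W^{(\kappa+n)}$, which is already \eqref{eq:Rodrigues_general} without any need to absorb a conjugate transpose.
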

\begin{proof}
Is analogous to \cite[Theorem 3.1, (iii)]{KdlRR}.
\end{proof}

\begin{corollary} 
The following integral formula holds,
\begin{equation}
\nonumber
Q_n(y)=\frac{G_n^{(\kappa)}}{2\pi i} \, \left[\int_{\gamma(x)}
\frac{W^{(\kappa+n)}(z)}{(z-y)^n} \, dz\right] \, (W^{(\kappa)}(y))^{-1},
\end{equation} 
where $\gamma$ is a closed contour around $y$. 
\end{corollary}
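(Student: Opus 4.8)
The plan is to start from the Rodrigues formula \eqref{eq:Rodrigues_general}, which expresses the monic polynomial $Q_n^{(\kappa)}$ as $G_n^{(\kappa)}\,\bigl(\partial_y^n W^{(\kappa+n)}\bigr)\,(W^{(\kappa)}(y))^{-1}$, and to replace the $n$-th derivative by a Cauchy-type contour integral. The key observation is that $W^{(\kappa+n)}$ extends to a holomorphic matrix-valued function in a neighbourhood of the interval $(0,1)$ in $\CC$: indeed $W^{(\kappa+n)}(z)=(1-z)^{\alpha+\kappa+n}z^{\beta+\kappa+n}\,\Psi_0(z)\,T^{(\kappa+n)}\,\Psi_0(z)^*$ by \eqref{eq:Definition_Wnu_general}, and $\Psi_0$ is analytic on a neighbourhood of $(0,1)$ by the first-order differential equation \eqref{eq:first_O_DE_Psi0} (choosing branch cuts away from the contour, and noting that for the Rodrigues formula we only need $z$ in a small complex disc around a fixed real $y\in(0,1)$, where the powers are single-valued). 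Hence for any $y\in(0,1)$ and any sufficiently small positively oriented closed contour $\gamma(y)$ encircling $y$ once and staying inside this domain of holomorphy, Cauchy's integral formula for derivatives gives
\begin{equation*}
\bigl(\partial_y^n W^{(\kappa+n)}\bigr)(y)=\frac{n!}{2\pi i}\int_{\gamma(y)}\frac{W^{(\kappa+n)}(z)}{(z-y)^{n+1}}\,dz .
\end{equation*}

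The second step is simply to substitute this expression into \eqref{eq:Rodrigues_general}. One must be slightly careful about the normalisation of the exponent on $(z-y)$ and the factorial: the statement as written has $(z-y)^{-n}$ rather than $(z-y)^{-n-1}$ and omits the $n!$, so either the constant $G_n^{(\kappa)}$ is understood to absorb the factor $n!$ (and the exponent is a typo that should read $n+1$), or — more likely intended — one redefines $G_n^{(\kappa)}$ to absorb $n!$ and keeps the display as an assertion up to the (harmless, invertible, constant) prefactor $G_n^{(\kappa)}$. In the writeup I would state the contour integral with $(z-y)^{n+1}$ in the denominator and the factor $n!$, then remark that the corollary follows by absorbing $n!$ into $G_n^{(\kappa)}$; alternatively, if the paper insists on the displayed form, I would point out that the contour integral of $W^{(\kappa+n)}(z)/(z-y)^n$ equals $\frac{2\pi i}{(n-1)!}(\partial_y^{n-1}W^{(\kappa+n)})(y)$, which is not what the Rodrigues formula needs, so the exponent must be $n+1$. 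I would simply write the corollary correctly.

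Concretely, the proof runs: fix $y\in(0,1)$; choose $r>0$ small enough that the closed disc of radius $r$ about $y$ lies in $(0,\infty)\times(-\infty,1)$-type domain where $z\mapsto z^{\beta+\kappa+n}$, $z\mapsto(1-z)^{\alpha+\kappa+n}$ and $z\mapsto\Psi_0(z)$ are all holomorphic and single-valued; let $\gamma(y)$ be the boundary circle traversed once counterclockwise. Then $W^{(\kappa+n)}$ is holomorphic on and inside $\gamma(y)$, so the Cauchy formula for the $n$-th derivative applies entrywise to the matrix-valued function, giving the displayed identity for $(\partial_y^n W^{(\kappa+n)})(y)$. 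Multiply on the left by $G_n^{(\kappa)}$ and on the right by $(W^{(\kappa)}(y))^{-1}$ (which exists since $W^{(\kappa)}(y)>0$ for $y\in(0,1)$, as $T^{(\kappa)}>0$ and $\Psi_0(y)$ is invertible) and invoke Theorem \ref{thm:Rodrigues_formula}. The result is precisely the stated formula, with the understanding that $G_n^{(\kappa)}$ here differs from the one in \eqref{eq:Rodrigues_general} by the scalar $n!$.

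The main (and essentially only) obstacle is the holomorphy/single-valuedness bookkeeping: one has to be explicit that the branch points of $z^{\beta+\kappa+n}$ and $(1-z)^{\alpha+\kappa+n}$ at $z=0,1$ are avoided because the contour $\gamma$ is taken to be a small loop around an interior point $y\in(0,1)$, and that $\Psi_0$ genuinely continues holomorphically there — this is where \eqref{eq:first_O_DE_Psi0} is used, since a linear ODE with coefficients holomorphic on $(0,1)$ has holomorphic solutions on a complex neighbourhood of $(0,1)$. Everything else is a one-line application of Cauchy's integral formula, done entrywise for matrix-valued holomorphic functions. I would therefore keep the proof to a short paragraph, flagging the constant-normalisation point so the reader is not confused by the $(z-y)^{n}$ versus $(z-y)^{n+1}$ discrepancy.
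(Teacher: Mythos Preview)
Your proposal is correct and follows exactly the paper's approach: the paper's proof consists of the single sentence ``The proof follows by applying Cauchy's integral formula to \eqref{eq:Rodrigues_general},'' and you have supplied the details (holomorphy of $W^{(\kappa+n)}$ near $y\in(0,1)$, entrywise Cauchy formula, substitution into the Rodrigues formula). Your observation about the exponent $(z-y)^{n}$ versus $(z-y)^{n+1}$ and the missing $n!$ is well taken --- the displayed formula in the paper is indeed imprecise in this respect, and your suggested fix (absorb $n!$ into $G_n^{(\kappa)}$ and correct the exponent to $n+1$) is the right way to state it.
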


\begin{proof} The proof follows by applying Cauchy's integral formula to
\eqref{eq:Rodrigues_general}. 
\end{proof}

\begin{corollary} 
The following relation holds for the monic orthogonal
polynomials $Q_n$ 
\begin{equation}
\label{eq:diferential_relation_Pn}
G^{(\kappa)}_n(G^{(\kappa)}_{n+1})^{-1}Q^{(\kappa)}_{n+1}=(\partial_y
Q^{(\kappa+1)}_n)(\Gamma_2^{(\kappa)})^*+Q^{(\kappa+1)}_n
(\Gamma^{(\kappa)}_1)^*. 
\end{equation} 
\end{corollary}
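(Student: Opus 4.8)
The plan is to read off the relation from the Rodrigues formula of Theorem~\ref{thm:Rodrigues_formula} together with the preceding Lemma, exactly as one does in the scalar Jacobi case. The idea is that equation~\eqref{eq:diferential_relation_Pn} is nothing but the statement that the first-order operator $\Xi^{(\kappa)}$ sends the monic polynomial $Q^{(\kappa+1)}_n$ (orthogonal for $W^{(\kappa+1)}$) to a scalar multiple of the monic polynomial $Q^{(\kappa)}_{n+1}$ (orthogonal for $W^{(\kappa)}$). So the first step is to unwind the definition: by the Lemma, for any matrix polynomial $Q$ we have $Q\Xi^{(\kappa)}=\partial_y Q\,(\Gamma_2^{(\kappa)})^*+Q\,(\Gamma_1^{(\kappa)})^*$, and $\langle\partial_y P,Q\rangle^{(\kappa+1)}=-\langle P,Q\Xi^{(\kappa)}\rangle^{(\kappa)}$. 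Hence the right-hand side of \eqref{eq:diferential_relation_Pn} equals $Q^{(\kappa+1)}_n\Xi^{(\kappa)}$, and the claim reduces to showing
\[
Q^{(\kappa+1)}_n\Xi^{(\kappa)}=G^{(\kappa)}_n(G^{(\kappa)}_{n+1})^{-1}Q^{(\kappa)}_{n+1}.
\]

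\textbf{Second step:} prove that $Q^{(\kappa+1)}_n\Xi^{(\kappa)}$ is, up to a constant left matrix factor, the monic MVOP of degree $n+1$ for $W^{(\kappa)}$. For the degree: $\Gamma_2^{(\kappa)}$ has degree two and $\Gamma_1^{(\kappa)}$ degree one, so $Q\mapsto Q\Xi^{(\kappa)}$ raises degree by exactly one provided the top-degree coefficients do not cancel; a quick leading-term computation (using $\deg\Gamma_2^{(\kappa)}=2$, $\deg\Gamma_1^{(\kappa)}=1$, which holds under the hypothesis of Proposition~\ref{eq:D^nu_Gamma2_Gamma1}) gives the leading coefficient of $Q^{(\kappa+1)}_n\Xi^{(\kappa)}$ as a nonzero constant matrix, hence invertible. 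For the orthogonality: for any matrix polynomial $R$ of degree $\le n$,
\[
\langle Q^{(\kappa+1)}_n\Xi^{(\kappa)},R\rangle^{(\kappa)}=-\langle \partial_y R,\,Q^{(\kappa+1)}_n\rangle^{(\kappa+1)}{}^{*}=0
\]
(after taking adjoints appropriately), because $\partial_y R$ has degree $\le n-1<n$ and $Q^{(\kappa+1)}_n$ is orthogonal to everything of lower degree for $W^{(\kappa+1)}$. Thus $Q^{(\kappa+1)}_n\Xi^{(\kappa)}$ is orthogonal to all polynomials of degree $\le n$ with respect to $W^{(\kappa)}$, has degree $n+1$, and has invertible leading coefficient, so it is a left constant multiple of the monic polynomial $Q^{(\kappa)}_{n+1}$, say $Q^{(\kappa+1)}_n\Xi^{(\kappa)}=H_n^{(\kappa)}Q^{(\kappa)}_{n+1}$ for some invertible constant matrix $H_n^{(\kappa)}$.

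\textbf{Third step:} identify $H_n^{(\kappa)}=G^{(\kappa)}_n(G^{(\kappa)}_{n+1})^{-1}$. The clean way is to use the Rodrigues formula. Writing $Q^{(\kappa)}_{n+1}=G^{(\kappa)}_{n+1}(\partial_y^{n+1}W^{(\kappa+n+1)})(W^{(\kappa)})^{-1}$ and $Q^{(\kappa+1)}_n=G^{(\kappa+1)}_n(\partial_y^n W^{(\kappa+1+n)})(W^{(\kappa+1)})^{-1}$, one substitutes into $Q^{(\kappa+1)}_n\Xi^{(\kappa)}=\partial_y Q^{(\kappa+1)}_n(\Gamma_2^{(\kappa)})^*+Q^{(\kappa+1)}_n(\Gamma_1^{(\kappa)})^*$ and uses the defining relation $(W^{(\kappa)}\Gamma_2^{(\kappa)})'=W^{(\kappa)}\Gamma_1^{(\kappa)}$ (i.e.~\eqref{eq:relation_W_Gamma2}, in adjoint form) to telescope the two terms into a single derivative $\partial_y\big[(\partial_y^n W^{(\kappa+1+n)})(W^{(\kappa)})^{-1}\cdot(\text{stuff})\big]$ that collapses to $(\partial_y^{n+1}W^{(\kappa+n+1)})(W^{(\kappa)})^{-1}$ up to the constant $G^{(\kappa+1)}_n$. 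Matching against the Rodrigues expression for $Q^{(\kappa)}_{n+1}$ then forces $H_n^{(\kappa)}=G^{(\kappa+1)}_n (G^{(\kappa)}_{n+1})^{-1}$; a comparison of the recursions defining the $G$'s (or simply reconciling indices, since the $G^{(\kappa)}_n$ in the statement are presumably the same constants appearing through the proof of Theorem~\ref{thm:Rodrigues_formula}) yields the stated form $G^{(\kappa)}_n(G^{(\kappa)}_{n+1})^{-1}$.

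\textbf{The main obstacle} I expect is bookkeeping, not substance: keeping the conjugate-transposes, the left-versus-right action of the operators, and the index shifts $\kappa\mapsto\kappa+1$, $n\mapsto n+1$ all consistent, and in particular checking that the constant produced by the telescoping argument in the third step is exactly $G^{(\kappa)}_n(G^{(\kappa)}_{n+1})^{-1}$ and not, say, $G^{(\kappa+1)}_n(G^{(\kappa)}_{n+1})^{-1}$ — this hinges on the precise normalization of the $G^{(\kappa)}_n$ adopted in the proof of Theorem~\ref{thm:Rodrigues_formula}, so the cleanest route may be to avoid the Rodrigues substitution altogether and instead pin down $H_n^{(\kappa)}$ by applying $\langle\cdot,Q^{(\kappa)}_{n+1}\rangle^{(\kappa)}$ to both sides of $Q^{(\kappa+1)}_n\Xi^{(\kappa)}=H_n^{(\kappa)}Q^{(\kappa)}_{n+1}$ and comparing with the Rodrigues-derived value of the norm $\langle Q^{(\kappa)}_{n+1},Q^{(\kappa)}_{n+1}\rangle^{(\kappa)}$. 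Either way the computation is routine once the conventions are fixed, so I would present it briefly and refer to \cite[Corollary 2.5]{KdlRR} and the analogous scalar Jacobi identity for the structural parallel.
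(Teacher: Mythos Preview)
Your approach is correct but takes a longer route than the paper. The paper's proof is essentially just your Step~3 ``Rodrigues substitution'' option, used as the \emph{entire} argument: write $Q^{(\kappa+1)}_n W^{(\kappa+1)} = G_n\,\partial_y^n W^{(\kappa+n+1)}$ from the Rodrigues formula, differentiate once to get $\partial_y\bigl(Q^{(\kappa+1)}_n W^{(\kappa+1)}\bigr) = G_n\,\partial_y^{n+1} W^{(\kappa+n+1)} = G_n (G^{(\kappa)}_{n+1})^{-1} Q^{(\kappa)}_{n+1} W^{(\kappa)}$, expand the left side by the product rule, and right-multiply by $(W^{(\kappa)})^{-1}$; since $W^{(\kappa+1)}(W^{(\kappa)})^{-1}=(\Gamma_2^{(\kappa)})^*$ and $(\partial_y W^{(\kappa+1)})(W^{(\kappa)})^{-1}=(\Gamma_1^{(\kappa)})^*$ by \eqref{eq:def_Gamma2}--\eqref{eq:def_Gamma1} and self-adjointness of the weights, the identity drops out in three lines. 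Your Steps~1--2 (proving $Q^{(\kappa+1)}_n\Xi^{(\kappa)}$ is a constant multiple of $Q^{(\kappa)}_{n+1}$ via the adjoint relation and a degree/orthogonality argument) are not needed, though they give a more conceptual explanation of \emph{why} $\Xi^{(\kappa)}$ acts as a raising operator. One caution on your Step~2: ``nonzero constant matrix, hence invertible'' is not a valid inference in the matrix setting, so if you kept that route you would need to compute the leading coefficient explicitly. Finally, your worry about $G^{(\kappa)}_n$ versus $G^{(\kappa+1)}_n$ is justified: the paper silently writes $G^{(\kappa)}_n$ for the Rodrigues constant of $Q^{(\kappa+1)}_n$, so the discrepancy you anticipated is a notational convention rather than a mathematical issue.
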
 
\begin{proof} 
It follows from \eqref{eq:Rodrigues_general} that
$$Q_n^{(\kappa+1)}(y)W^{(\kappa+1)}(y)=G^{(\kappa)}_n(\partial_y^{n} \,
W^{(\kappa+n+1)}(y)).$$ 
If we take the derivative with respect to $y$ on both
sides of this equation we obtain 
\begin{equation}
\label{eq:derivative_QW}
\partial_y(Q^{(\kappa+1)}_nW^{(\kappa+1)})=G^{(\kappa)}_n(\partial_y^{n+1} \,
W^{(\kappa+n+1)}(y))=G^{(\kappa)}_n(G^{(\kappa)}_{n+1})^{-1}Q^{(\kappa)}_{n+1}
W^{(\kappa)}.
\end{equation} 
On the other hand we have 
\begin{equation}
\label{eq:derivative_QW2} 
\partial_y(Q^{(\kappa+1)}_nW^{(\kappa+1)})=(\partial_y
Q^{(\kappa+1)}_n)W^{(\kappa+1)}+Q^{(\kappa+1)}_n(\partial_y W^{(\kappa+1)}).
\end{equation} 
By combining \eqref{eq:derivative_QW} and
\eqref{eq:derivative_QW2} and multiplying by $(W^{(\kappa)})^{-1}$ on the right,
we obtain
$$G^{(\kappa)}_n(G^{(\kappa)}_{n+1})^{-1}Q^{(\kappa)}_{n+1}=(\partial_y
Q^{(\kappa+1)}_n)W^{(\kappa+1)}(W^{(\kappa)})^{-1}+Q^{(\kappa+1)}_n(\partial_y
W^{(\kappa+1)})(W^{(\kappa)})^{-1}.$$ 
Using \eqref{eq:def_Gamma2} and
\eqref{eq:def_Gamma1} gives the result. 
\end{proof}

\begin{corollary} 
The matrices $\Gamma^{(\kappa)}_2$ and $\Gamma_1^{(\kappa)}$
can be written in terms of the monic polynomials $P^{(\kappa)}_n$ and the
coefficients $G^{(\kappa)}_n$ in the following way 
\begin{align*}
(\Gamma^{(\kappa)}_1)^*&=G_0^{(\kappa)}(G_1^{(\kappa)})^{-1} Q_1^{(\kappa)}, \\
(\Gamma^{(\kappa)}_2)^*&=G_1^{(\kappa)}(G_2^{(\kappa)})^{-1}
Q_2^{(\kappa)}-G_0^{(\kappa+1)}(G_1^{(\kappa+1)})^{-1}
(\Gamma_1^{(\kappa+1)})^*(\Gamma_1^{(\kappa)})^*. \end{align*} 
\end{corollary}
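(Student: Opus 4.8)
The plan is to read off both identities by specializing the differential relation \eqref{eq:diferential_relation_Pn} of the preceding corollary to the lowest two indices $n=0$ and $n=1$. Two elementary observations carry the argument. First, evaluating the Rodrigues formula \eqref{eq:Rodrigues_general} at $n=0$ forces $Q_0^{(\kappa)}=I$ and hence $G_0^{(\kappa)}=I$ for every $\kappa$; in particular every $G_n^{(\kappa)}$ appearing below is invertible by Theorem \ref{thm:Rodrigues_formula}. Second, since $Q_0^{(\kappa)}=I$ and $Q_1^{(\kappa)}$ is monic of degree one, we have $\partial_y Q_0^{(\kappa)}=0$ and $\partial_y Q_1^{(\kappa)}=I$.

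Taking $n=0$ in \eqref{eq:diferential_relation_Pn}, the term $(\partial_y Q_0^{(\kappa+1)})(\Gamma_2^{(\kappa)})^*$ drops out and $Q_0^{(\kappa+1)}=I$, so the relation collapses to
\[
G_0^{(\kappa)}(G_1^{(\kappa)})^{-1}Q_1^{(\kappa)}=(\Gamma_1^{(\kappa)})^*,
\]
which is precisely the asserted expression for $(\Gamma_1^{(\kappa)})^*$. Reading this relation at level $\kappa+1$ and inverting it also expresses $Q_1^{(\kappa+1)}$ in terms of $G_0^{(\kappa+1)}$, $G_1^{(\kappa+1)}$ and $(\Gamma_1^{(\kappa+1)})^*$, a fact needed in the next step.

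Taking $n=1$ in \eqref{eq:diferential_relation_Pn} and using $\partial_y Q_1^{(\kappa+1)}=I$ gives
\[
G_1^{(\kappa)}(G_2^{(\kappa)})^{-1}Q_2^{(\kappa)}=(\Gamma_2^{(\kappa)})^*+Q_1^{(\kappa+1)}(\Gamma_1^{(\kappa)})^*;
\]
solving for $(\Gamma_2^{(\kappa)})^*$ and substituting the expression for $Q_1^{(\kappa+1)}$ from the previous paragraph yields the claimed formula for $(\Gamma_2^{(\kappa)})^*$. There is no genuine obstacle here: the whole argument is a two-line manipulation of \eqref{eq:diferential_relation_Pn}. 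The only point demanding a little care is keeping track of the order in which the non-commuting matrix factors $G_n^{(\kappa)}$ are multiplied when one inverts the first identity and feeds it into the second, which is why the factors $G_0^{(\kappa)}$, although equal to the identity, are kept explicit in the statement.
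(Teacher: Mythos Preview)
Your proof is correct and follows precisely the approach sketched in the paper, whose proof consists of the single sentence that the corollary follows by evaluating \eqref{eq:diferential_relation_Pn} at $n=0$ and $n=1$. Your elaboration---using $Q_0^{(\kappa)}=I$, $\partial_y Q_1^{(\kappa)}=I$, and feeding the first identity at level $\kappa+1$ back into the $n=1$ computation---is exactly what the paper leaves implicit.
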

\begin{proof} The corollary follows by evaluating \eqref{eq:diferential_relation_Pn} in $n=0$ and $n=1$. 
\end{proof}

\section{Matrix-valued Gegenbauer polynomials} 
\label{sec:MVchebyshev} 
The goal of this section is to deform the family of matrix-valued Chebyshev polynomials
obtained in  \cite{KvPR,KvPR2} from the study of spherical functions associated
to the pair $(G,K)=(\SU(2)\times\SU(2),\mathrm{diag } \,  \SU(2))$. We will show
that our construction is an alternative to the one given in  \cite{KdlRR} and
provides a different factorization for the weight matrix. The key factorization
in \cite{KdlRR} is the LDU decomposition of the weight matrix, which  plays a
fundamental role, for instance to show that the weight matrix is positive
definite, see \cite[Corollary 2.5]{KdlRR}. In our case, this is a direct
consequence of the decomposition \eqref{eq:Definition_Wnu_general} of the
deformed weight, as we noticed in Remark \ref{rmk:weight_positive}.

In this case, we have all the ingredients to perform the deformation explicitly.
For $\ell\in\frac12\mathbb{N}$, the full spherical functions $\Phi_d:
[0,4\pi]\to \mathbb{C}^{(2\ell+1)\times(2\ell+1)}$ were introduced in
\cite[Definition 2.2, Theorem 2.1 and (2.5)]{KvPR}. In particular, the full
spherical function of degree zero is given by 
\begin{multline} 
\label{def:Phi_0}
(\Phi_0(t))_{n,m}= \sum_{j_1=-\frac{n}{2}}^{\frac{n}{2}} \,
\sum_{j_1=-\frac{2\ell-n}{2}}^{\frac{2\ell-n}{2}} \delta_{-\ell+m,j_1+j_2}
\binom{n}{j_1+\frac{n}{2}}\binom{2\ell-n}{j_2+\frac{(2\ell-n)}{2}}\binom{2\ell}{
2\ell-m}^{-1} \,e^{i(j_2-j_1)t}. 
\end{multline}
 If we denote $\Psi_d(y)=\Phi_d(\arccos(1-2y))$, the full spherical polynomial of degree $d$
is given by 
\begin{equation*} 
P_d(y)= \Psi_d(y) (
\Psi_0(y))^{-1},\qquad y\in[0,1]. 
\end{equation*} 
For $\ell=0$, it boils down to
a $1\times 1$ matrix and it is a multiple of the Chebyshev polynomial of degree
$d$. The polynomials $(P_d)_{d}$ form a sequence of matrix-valued orthogonal
polynomials with non-singular leading coefficients, see \cite[Proposition
4.6]{KvPR}, with respect to the sesqui-linear pairing 
\begin{equation*}
\langle P,Q\rangle = \int_{0}^1 P(y)W(y)Q(y)^*
dy,\quad W(y)=y^{1/2}(1-y)^{1/2} \Psi_0(y) \Psi_0(y)^*. 
\end{equation*} 
The full
spherical functions $\Psi_d$ satisfy the following differential equation,
\begin{equation*} 
\Psi_d(y)\, \Omega  =
y(1-y)\Psi_d''(y) + a(y) \Psi_d'(y) + \Psi_d(y)\, F(y) = \Lambda_d \, \Psi_d(y),
\end{equation*} 
where $a(y)=\frac32 - 3y$, the eigenvalue is the diagonal matrix
matrix $(\Lambda_d)_{i,i}=-d(2\ell+2+d)I+i(2\ell-i),$ and \begin{multline}
\label{eq:F(t)_SU2} F(y)=\sum_{i=0}^{2\ell} \frac{2y(1-y)(\ell(\ell+2)-i^2+2\ell
i)-\ell(2i+1)+i^2}{2y(1-y)}\, E_{i,i} \\
+\sum_{i=1}^{2\ell}\frac{i(2\ell-i+1)(1-2y)}{4y(1-y)} \, E_{i,i-1}
+\sum_{i=0}^{2\ell-1} \frac{(i+1)(2\ell-i)(1-2y)}{4y(1-y)} E_{i,i+1}.
\end{multline} The differential operator $\Omega$ is the radial part of the
Casimir operator on $G$, see  \cite[Section 7.2]{KvPR2}. The function $\Psi_0$
satisfies the first order differential equation \begin{equation}
\label{eq:Psi0_in_y} 2y(1-y)\,\Psi_0'(y)+\left(S-\ell+2\ell y\right)\Psi_0(y)=0,
\end{equation} where $S$ is the tridiagonal matrix
\begin{equation}\label{eq: S}
S= \sum_{i=1}^{2\ell}
\frac{i}{2} E_{i,i-1} + \sum_{i=0}^{2\ell-1} \frac{(2\ell-i)}{2} E_{i,i+1},
\end{equation}
see \cite[Lemma 7.12]{KvPR2}. The polynomials $P_d$ are joint eigenfunctions of
the matrix-valued differential operators $D$ and $E$ given by 
\begin{equation*}
D\,=\,y(1-y)\frac{d^2}{dy^2}+\left(\frac{d}{dy}\right)(C-yU)-V,\qquad
E\,=\,\left(\frac{d}{dy}\right)(yB_1+B_0)+A_0, 
\end{equation*}
where the matrices
$C$, $U$, $V$, $B_0$, $B_1$ and $A_0$ are given by \begin{gather*}
C=-\sum_{i=0}^{2\ell} \frac{(2\ell-i)}{2} E_{i,i+1} +\sum_{i=0}^{2\ell}
\frac{(2\ell+3)}{2}E_{ii} - \sum_{i=0}^{2\ell} \frac{i}{2}E_{i,i-1},\quad
U=(2\ell+3)I,\\ V= -\sum_{i=0}^{2\ell} i(2\ell-i) E_{i,i} \quad
A_0=\sum_{i=0}^{2\ell} \frac{(2\ell+2)(i-2\ell)}{2\ell} E_{i,i}, \quad
B_1=-\sum_{i=0}^{2\ell} \frac{(\ell-i)}{\ell} E_{i,i}, \\
B_0=-\sum_{i=0}^{2\ell} \frac{(2\ell-i)}{4\ell} E_{i,i+1} +\sum_{i=0}^{2\ell}
\frac{(\ell-i)}{2\ell}E_{ii} + \sum_{i=0}^{2\ell} \frac{i}{4\ell}E_{i,i-1}.
\end{gather*}

The first order differential equation \eqref{eq:Psi0_in_y} can be used to derive
a simple and compact expression for $\Psi_0$ which will be crucial in the
forthcoming subsections. The proof is relegated to the Appendix. Let $K$ be the
constant matrix \begin{equation} \label{eq:K} K_{i,j}=K_j(i)=K_j(i,1/2,2\ell),
\qquad i,j\in \{0,\ldots,2\ell\}. \end{equation} where $K_n(x,p,N)$ are the
Krawtchouk polynomials, see e.g. \cite[\S1.10]{KoekS}. The orthogonality
relations for the Krawtchouk polynomials give a simple inverse for the matrix
$K$, namely $$K^{-1}=2^{-2\ell}MKM,$$ where $M$ is the diagonal matrix with
entries $M_{j,j}=\binom{2\ell}{j}$, $j=0,\ldots,2\ell$.
%
\begin{theorem} 
\label{prop:Psi_0_as_Krawtchouk} 
For any
$\ell\in\frac12\mathbb{N}$, we have 
\begin{equation}
\label{eq:decomposition_Phi0} \Psi_0(y)=K \Upsilon(y)K, 
\end{equation} where $K$
is the constant matrix given by \eqref{eq:K} and $\Upsilon$ is the diagonal
matrix 
\begin{equation*}
\Upsilon(y)_{j,j} = (-1)^{\frac{3j}{2}}
\binom{2\ell}{j}y^{\frac{j}{2}}(1-y)^{\frac{2\ell-j}{2}}.
\end{equation*}
\end{theorem}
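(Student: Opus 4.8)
The plan is to verify that the claimed product $K\Upsilon(y)K$ satisfies the defining first-order differential equation \eqref{eq:Psi0_in_y}, together with the correct value at a convenient point, since by \cite[Cor.~3.4]{vPR} (or directly, since \eqref{eq:Psi0_in_y} is a linear first-order system with a regular singular point) the solution normalized appropriately is unique. Concretely, I would set $\widetilde\Psi_0(y) = K\Upsilon(y)K$ and check two things: first that $2y(1-y)\widetilde\Psi_0'(y) + (S - \ell + 2\ell y)\widetilde\Psi_0(y) = 0$, and second that $\widetilde\Psi_0$ has the right normalization (e.g.\ $(\widetilde\Psi_0(y))_{0,0}$ matches, or more robustly that $\widetilde\Psi_0$ agrees with $\Psi_0$ at $y=1$, where from \eqref{def:Phi_0} one reads off that $\Psi_0(1)$ has a simple explicit form). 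Since the differential equation together with the behavior at an endpoint pins down the solution, this reduces the theorem to a computation.

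The main reduction is to conjugate the differential equation by $K$. Because $K$ is a \emph{constant} invertible matrix, $\widetilde\Psi_0$ solves \eqref{eq:Psi0_in_y} if and only if the diagonal matrix $\Upsilon(y)$ solves the conjugated equation
\begin{equation*}
2y(1-y)\,\Upsilon'(y) + \big(K^{-1}(S-\ell+2\ell y)K\big)\,\Upsilon(y) K \cdot K^{-1} = 0,
\end{equation*}
i.e.\ $2y(1-y)\Upsilon'(y) + \big(K^{-1}SK - \ell + 2\ell y\big)\Upsilon(y) = 0$ (the terms $-\ell I$ and $2\ell y I$ are scalar and pass through $K$). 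So everything hinges on computing $K^{-1}SK$ where $S$ is the tridiagonal matrix \eqref{eq: S} and $K_{i,j}=K_j(i,1/2,2\ell)$. The key classical fact is that the Krawtchouk polynomials $K_j(x,p,N)$ are eigenfunctions of the second-order difference operator in $x$ dual to the three-term recurrence; conjugating a tridiagonal matrix built from the recurrence coefficients of the Krawtchouk polynomials by the matrix of their values diagonalizes (or bidiagonalizes) it. For $p=1/2$ the Krawtchouk polynomials are symmetric, $K_j(i) = (-1)^j K_i(j)$ up to normalization, which is exactly the self-duality that makes $K^2$ a multiple of $MKM$ (the stated inverse formula $K^{-1}=2^{-2\ell}MKM$) and which should make $K^{-1}SK$ come out \emph{lower bidiagonal} — precisely the form needed so that the equation for $\Upsilon$ decouples into $N+1$ scalar first-order ODEs, one per diagonal entry $\Upsilon(y)_{j,j}$.

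I would carry this out in steps: (1) record the recurrence and difference relations for $K_n(x,1/2,2\ell)$ from \cite[\S1.10]{KoekS}; (2) show $S K = K L$ for an explicit lower-triangular (in fact bidiagonal) $L$ with diagonal entries $L_{j,j}$ — this is the one genuine computation, using the forward/backward shift operators for Krawtchouk polynomials; (3) solve the resulting scalar ODE $2y(1-y)\Upsilon_{j,j}'(y) + (L_{j,j} - \ell + 2\ell y)\Upsilon_{j,j}(y) = 0$ and check its solution is proportional to $y^{j/2}(1-y)^{(2\ell-j)/2}$, which forces $L_{j,j} = \ell - j$ — a consistency check on step (2); (4) fix the overall constants (the prefactors $(-1)^{3j/2}\binom{2\ell}{j}$ and the flanking copies of $K$) by evaluating at $y=1$ and comparing with \eqref{def:Phi_0}. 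The main obstacle is step (2), the identity $SK=KL$: it requires matching the tridiagonal action of $S$ against the three-term/shift structure of the Krawtchouk polynomials at the specialization $p=1/2$, and getting the off-diagonal normalizations and signs exactly right. Everything after that is routine. Since the paper explicitly relegates the proof to the Appendix, I would present steps (1)–(3) as the core lemma and leave the bookkeeping of constants in step (4) to a short verification.
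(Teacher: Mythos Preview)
Your approach is essentially the same as the paper's: both hinge on the fact that $K$ diagonalizes $S$, which reduces the matrix ODE \eqref{eq:Psi0_in_y} to decoupled scalar equations whose solutions are the monomials $y^{j/2}(1-y)^{(2\ell-j)/2}$, and then both fix the remaining constants by matching against the explicit data \eqref{def:Phi_0}. The paper phrases this as ``find a basis $F_j$ of solutions and expand each column of $\Psi_0$ in it,'' while you phrase it as ``verify $K\Upsilon K$ directly''; these are the same computation read in two directions.

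Two points to correct. First, your expectation that $K^{-1}SK$ is merely lower bidiagonal is both wrong and, if it were right, fatal to your argument: if $L=K^{-1}SK$ had a nonzero subdiagonal, then $(L-\ell+2\ell y)\Upsilon(y)$ would have nonzero off-diagonal entries while $2y(1-y)\Upsilon'(y)$ is diagonal, so the conjugated equation could not hold for a diagonal $\Upsilon$. In fact $K^{-1}SK=\mathrm{diag}(\ell,\ell-1,\ldots,-\ell)$ exactly (the columns of $K$ are eigenvectors of $S$; the paper cites \cite[Lemma~4.3]{KvPR2}), and this full diagonalization is what makes the decoupling work. Your step~(3) already anticipates $L_{j,j}=\ell-j$, so once you realize $L$ is diagonal the rest goes through unchanged.

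Second, your normalization step~(4) by evaluating at $y=1$ is delicate: $\Upsilon(1)$ has rank one, so $K\Upsilon(1)K$ does not determine the constant matrix $C$ in $\widetilde\Psi_0=\Psi_0 C$. The paper instead reads off the coefficient of $e^{-i\ell t}$ in the Fourier expansion of $\Phi_0(t)$ (Lemma~\ref{lem:Phi_0_leading_term}), which is an invertible diagonal matrix and pins down all the constants at once. You could equally well evaluate at an interior point, but the leading-term comparison is cleaner.
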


\subsection{The deformation} In this subsection we apply Theorem
\ref{thm:deformation} to obtain a deformation of the pair $(W,D)$. Since we want
to deform matrix-valued Chebyshev polynomials into matrix-valued Gegenbauer
polynomials, we shift $\kappa=\nu-1$ in order to match the standard convention
for Gegenbauer polynomials. In this way, our deformed polynomials $P_d^{(\nu)}$
coincide with the polynomials $P_d$ for $\nu=1$. We take 
\begin{equation*}
a^{(\nu)} (y) = \frac12 +\nu - y(2\nu+1), \qquad
F^{(\nu)}=F-(\nu-1)(2\ell+\nu+1)-(\nu-1)(1-2y)\Psi_0^{-1}\Psi_0',
\end{equation*}
where $F$ is given in \eqref{eq:F(t)_SU2}. We follow the method described in Section
\ref{sect:Deformation}. It follows from the explicit expression in Theorem
\ref{prop:Psi_0_as_Krawtchouk}  that 
\begin{equation*}
\Psi_0^{-1}\Psi_0'=-\frac{1}{2y(1-y)}(S^*-\ell+2\ell y).
\end{equation*}
Therefore we have 
\begin{equation} 
\label{eq:Fnu_SU2}
F^{(\nu)}(y)=F(y)-(\nu-1)(2\ell+\nu+1)+\frac{(\nu-1)(1-2y)}{2y(1-y)}\left(S^*-\ell+2 \ell y\right). \end{equation} Note that $F^{(\nu)}$ is a tridiagonal
matrix. Therefore we can use Remark \ref{rmk:TF=F*T_tridiag} to obtain a
diagonal matrix $T^{(\nu)}$ as in Theorem \ref{thm:deformation}. \begin{lemma}
Let $T^{(\nu)}$ be the diagonal matrix \begin{equation}
\label{eq:formula_Tnu_SU2} T^{(\nu)}_{i,i}=\binom{2\ell}{i}
\frac{(\nu)_i}{(\nu+2\ell-i)_i}, \end{equation} for $i=0,\ldots,\lfloor \ell
\rfloor$ and $T_{i,i}=T_{2\ell-i,2\ell-i}$. Then 
\begin{equation*}
T^{(\nu)}\,(F^{(\nu)}(y))^*=F^{(\nu)}(y)\,T^{(\nu)}. 
\end{equation*} 
\end{lemma}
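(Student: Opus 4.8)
The plan is to verify the identity by a direct computation, using Remark~\ref{rmk:TF=F*T_tridiag}: since $F^{(\nu)}$ is tridiagonal (as noted just above the statement), the symmetry condition $T^{(\nu)}(F^{(\nu)}(y))^{*}=F^{(\nu)}(y)T^{(\nu)}$ is equivalent to a first-order recursion for the diagonal entries of $T^{(\nu)}$, which I then solve in closed form.

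First I would read off the off-diagonal entries of $F^{(\nu)}$ from \eqref{eq:Fnu_SU2}, using \eqref{eq:F(t)_SU2} for $F$ and the transpose of \eqref{eq: S} for $S^{*}$, so that $(S^{*})_{i,i-1}=\tfrac{2\ell-i+1}{2}$ and $(S^{*})_{i,i+1}=\tfrac{i+1}{2}$ (the diagonal term $-\ell+2\ell y$ contributes nothing off-diagonal). A short calculation yields
\[
F^{(\nu)}_{i,i-1}=\frac{(1-2y)(2\ell-i+1)(i+\nu-1)}{4y(1-y)},\qquad
F^{(\nu)}_{i,i+1}=\frac{(1-2y)(i+1)(2\ell-i+\nu-1)}{4y(1-y)}.
\]
In particular the factor $\tfrac{1-2y}{4y(1-y)}$ is common to all off-diagonal entries, so it cancels in $F^{(\nu)}_{i,i+1}/F^{(\nu)}_{i+1,i}$, which is therefore constant in $y$; this is exactly the hypothesis of Remark~\ref{rmk:TF=F*T_tridiag}. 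The Remark then reduces the desired identity to
\[
T^{(\nu)}_{i+1,i+1}=\frac{F^{(\nu)}_{i+1,i}}{F^{(\nu)}_{i,i+1}}\,T^{(\nu)}_{i,i}
=\frac{(2\ell-i)(i+\nu)}{(i+1)(2\ell+\nu-i-1)}\,T^{(\nu)}_{i,i},
\qquad i=0,\ldots,2\ell-1,
\]
whose solution is unique up to an overall scalar.

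Next I would solve this recursion by telescoping, with the normalization $T^{(\nu)}_{0,0}=1$. The product $\prod_{i=0}^{n-1}\tfrac{(2\ell-i)(i+\nu)}{(i+1)(2\ell+\nu-i-1)}$ separates into the four factors $\tfrac{(2\ell)!}{(2\ell-n)!}$, $(\nu)_{n}$, $n!$ and $(2\ell+\nu-n)_{n}$ --- the last obtained by recognizing $(2\ell+\nu-1)(2\ell+\nu-2)\cdots(2\ell+\nu-n)$ as the rising factorial $(2\ell+\nu-n)_{n}$ --- and these collapse to $\binom{2\ell}{n}\tfrac{(\nu)_{n}}{(\nu+2\ell-n)_{n}}$, which is \eqref{eq:formula_Tnu_SU2}. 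Finally I would check that this closed form is palindromic, $T^{(\nu)}_{i,i}=T^{(\nu)}_{2\ell-i,2\ell-i}$: using $\binom{2\ell}{i}=\binom{2\ell}{2\ell-i}$ this reduces to $(\nu)_{i}(\nu+i)_{2\ell-i}=(\nu)_{2\ell-i}(\nu+2\ell-i)_{i}$, and both sides equal $(\nu)_{2\ell}$ by the Pochhammer splitting $(\nu)_{a+b}=(\nu)_{a}(\nu+a)_{b}$. This shows that the two halves of the list in the statement are mutually consistent and that the recursion holds for all indices. The computation is routine; the only points requiring care are the correct transposition $S\mapsto S^{*}$ and the bookkeeping of the four Pochhammer-type products, so I do not expect a genuine obstacle here.
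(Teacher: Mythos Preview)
Your proof is correct and follows the same approach as the paper: apply Remark~\ref{rmk:TF=F*T_tridiag} to reduce the symmetry condition to the first-order recursion $T^{(\nu)}_{i+1,i+1}=\dfrac{F^{(\nu)}_{i+1,i}}{F^{(\nu)}_{i,i+1}}\,T^{(\nu)}_{i,i}$, and then identify the solution. Your version is in fact more complete than the paper's, which stops at the recursion without explicitly telescoping to the closed form \eqref{eq:formula_Tnu_SU2} or checking the palindrome identity $T^{(\nu)}_{i,i}=T^{(\nu)}_{2\ell-i,2\ell-i}$; your displayed off-diagonal entries of $F^{(\nu)}$ also correct some typographical slips in the paper's proof.
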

\begin{proof} It follows from \eqref{eq:Fnu_SU2} and \eqref{eq:F(t)_SU2} that
\begin{equation*}
F^{(\nu)}_{i,i-1}=(2\ell-i+1)(\nu-i+1)\frac{(1-2y)}{y(1-y)},\qquad
F^{(\nu)}_{i,i-1}=(i+1)(2\ell+\nu-i-1)\frac{(1-2y)}{y(1-y)}.
\end{equation*}
By Remark
\ref{rmk:TF=F*T_tridiag}, if we define $T^{(\nu)}$ as the diagonal matrix
\begin{equation*}
T^{(\nu)}_{i+1,i+1}=\frac{F^{(\nu)}_{i+1,i}}{F^{(\nu)}_{i,i+1}} \, T^{(\nu)}_{i,i} = \frac{(2\ell-i+1)(\nu-i-1)}{i(2\ell+\kappa-i)} \, T^{(\nu)}_{i,i},
\end{equation*} 
then the condition in Theorem \ref{thm:deformation} holds
true. This completes the proof of the lemma. \end{proof} \begin{corollary} If
$T^{(\nu)}$ is as in Lemma \ref{eq:formula_Tnu_SU2}, then the pair
\begin{align*} W^{(\nu)}(y)&=(1-y)^{\nu-1/2}y^{\nu-1/2} \, \Psi_0(y) \,
T^{(\nu)} \, \Psi_0(y)^*, \\
D^{(\nu)}&=y(1-y)\partial_y^2+\partial_y(C+\nu-1-y(U+2\nu+1))-(V+(\nu-1)
U+(\nu-1)(\nu-2)), \end{align*} is a MVCP. \end{corollary}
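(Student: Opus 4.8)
The plan is to read the corollary off from Theorem~\ref{thm:deformation} by taking $\kappa=\nu-1$. All the input that Theorem~\ref{thm:deformation} requires has already been assembled in this subsection for the pair $(\SU(2)\times\SU(2),\diag(\SU(2)))$: the constants $\alpha=\beta=\tfrac12$ and $U=(2\ell+3)I$, the operator $\Omega$ with $a(y)=\tfrac32-3y$ and $F$ as in \eqref{eq:F(t)_SU2}, the function $\Psi_0$ of Theorem~\ref{prop:Psi_0_as_Krawtchouk}, and the deformed ingredients $a^{(\nu)}$ and $F^{(\nu)}$ fixed at the start of the subsection. First I would observe that these $a^{(\nu)}$, $F^{(\nu)}$ are precisely the functions $a^{(\kappa)}$, $F^{(\kappa)}$ prescribed by Theorem~\ref{thm:deformation} with $\kappa=\nu-1$, once the abstract term involving $\Psi_0^{-1}\Psi_0'$ is rewritten as a rational function by means of the identity $\Psi_0^{-1}\Psi_0'=-\tfrac{1}{2y(1-y)}(S^*-\ell+2\ell y)$ extracted from Theorem~\ref{prop:Psi_0_as_Krawtchouk}. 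Granted this, Corollary~\ref{Cor: Deformation D} with $\kappa=\nu-1$ identifies the operator $D^{(\nu)}=\Psi_0\,\Omega^{(\nu)}\,\Psi_0^{-1}$ with the hypergeometric operator displayed in the statement, and the deformed weight of Theorem~\ref{thm:deformation} becomes $(1-y)^{1/2+(\nu-1)}y^{1/2+(\nu-1)}\,\Psi_0(y)\,T^{(\nu)}\,\Psi_0(y)^* = (1-y)^{\nu-1/2}y^{\nu-1/2}\,\Psi_0(y)\,T^{(\nu)}\,\Psi_0(y)^*$. Thus the corollary is exactly the $\kappa=\nu-1$ instance of Theorem~\ref{thm:deformation}, and all that remains is to check the two hypotheses of that theorem.

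The first hypothesis, the commutation relation \eqref{eq:conditionT}, is precisely the content of the preceding Lemma: since $F^{(\nu)}$ is tridiagonal, Remark~\ref{rmk:TF=F*T_tridiag} reduces \eqref{eq:conditionT} to the recursion that the diagonal matrix $T^{(\nu)}$ of \eqref{eq:formula_Tnu_SU2} was built to satisfy, so nothing further is needed there. For the remaining hypothesis, $T^{(\nu)}>0$, I would argue entrywise, assuming $\nu>0$. Since $T^{(\nu)}$ is diagonal and $T^{(\nu)}_{i,i}=T^{(\nu)}_{2\ell-i,2\ell-i}$, it suffices to show $T^{(\nu)}_{i,i}>0$ for $0\le i\le\lfloor\ell\rfloor$. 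Writing
\begin{equation*}
T^{(\nu)}_{i,i}=\binom{2\ell}{i}\,\prod_{k=0}^{i-1}\frac{\nu+k}{\nu+2\ell-i+k},
\end{equation*}
each factor is strictly positive: the binomial coefficient is positive, the numerators obey $\nu+k\ge\nu>0$, and the denominators obey $\nu+2\ell-i+k\ge\nu+(2\ell-i)\ge\nu+\ell>0$ because $i\le\lfloor\ell\rfloor\le\ell$. Hence $T^{(\nu)}>0$, and Theorem~\ref{thm:deformation} yields that $(W^{(\nu)},D^{(\nu)})$ is a MVCP.

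I do not expect a genuine obstacle here. The analytic substance --- that $D^{(\nu)}$ is intertwined with $\partial_y$ (step \textbf{D1}), that it descends to a symmetric operator on the level of spherical functions for the stated $a^{(\nu)}$, $F^{(\nu)}$ (step \textbf{D2}), and that a solution $T^{(\nu)}$ of \eqref{eq:conditionT} exists --- has already been carried out in Corollaries~\ref{Cor: Deformation D}--\ref{cor:a_scalar}, Proposition~\ref{prop:symmetry_Omega_nu}, and the preceding Lemma, respectively. What is left is the elementary positivity computation above and the routine bookkeeping of matching the specializations to the expressions in the statement; the mild care needed is only in keeping track of the shift $\kappa=\nu-1$ and of the constants $\alpha=\beta=\tfrac12$, $U=(2\ell+3)I$ of this particular example.
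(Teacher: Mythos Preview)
Your proposal is correct and follows exactly the route the paper intends: the corollary is stated without proof because it is the immediate specialization of Theorem~\ref{thm:deformation} at $\kappa=\nu-1$, once the preceding Lemma has verified condition~\eqref{eq:conditionT}. Your write-up is in fact more complete than the paper's, since you spell out the entrywise positivity of $T^{(\nu)}$ that Theorem~\ref{thm:deformation} requires but the paper leaves implicit.
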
 \begin{remark}
Observe that the differential operator $2D^{(\nu)}$ is, up to a the change of
variables $x=1-2y$ and a multiple of the identity, the differential operator
$D^{(\nu)}$ in \cite[Theorem 2.3]{KdlRR}.  In the following subsection we will
show that the weight matrix $W^{(\kappa)}$ is also closely related to the weight
matrix introduced in \cite[Definition 2.1]{KdlRR}. \end{remark} \begin{remark}
In \cite{TZ} the authors construct families of matrix-valued orthogonal
polynomials of size $2\times2$ and $3\times 3$ from the study of spherical
functions of fundamental $K$-types associated with the pair $(G,K) =
(\SO(n+1),\SO(n))$. If we restrict our deformed weight $W^{(\kappa)}$ to the
size $3\times 3$, it can be matched with the one given in \cite[Section 9.2]{TZ}
by the identification $\kappa=1/(l+1)$. \end{remark}

\subsection{The shift operator $\partial_y$} 
The goal of this subsection is to prove that the pair $(W^{(\kappa)},D^{(\kappa)})$ allows for the
shift $\partial_y$. For this we will show that  $\Gamma_2^{(\nu)}$ and $\Gamma_1^{(\nu)}$
defined as is \eqref{eq:def_Gamma2} and \eqref{eq:def_Gamma1} are matrix-valued
polynomials of degree two and one respectively. Then  by Proposition
\ref{eq:D^nu_Gamma2_Gamma1}, if $(Q_n^{(\kappa)})_n$ is the sequence of monic
orthogonal polynomials with respect to $W^{(\kappa)}$, we have that $\partial_y
Q^{(\nu)}_d=dQ^{(\nu+1)}_d$, since the sequence of monic orthogonal polynomials
is unique. The proof will follow from the explicit expression of $\Psi_0$ given
in Theorem \ref{prop:Psi_0_as_Krawtchouk} and involves manipulation of
Krawtchouk polynomials. Some of the formulas for Krawtchouk polynomials that are
necessary in the proof are collected in the Appendix.

\begin{proposition} 
\label{prop:Gamma_2_Gamma1_poly} 
The functions
$\Gamma_2^{(\nu)}$ and $\Gamma_1^{(\nu)}$ introduced in  \eqref{eq:def_Gamma2}
and  \eqref{eq:def_Gamma1}  are matrix polynomials of degree two and one
respectively. 
\end{proposition}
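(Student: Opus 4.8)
The plan is to substitute the explicit expression $\Psi_{0}=K\Upsilon K$ of Theorem~\ref{prop:Psi_0_as_Krawtchouk} into \eqref{eq:def_Gamma2}--\eqref{eq:def_Gamma1} and carry out the resulting linear algebra after conjugating everything by the constant matrix $K$, so that all the $y$-dependence is carried by the diagonal matrix $\Upsilon$. Two structural facts are used throughout. First, the self-duality $K_{n}(x)=K_{x}(n)$ of the Krawtchouk polynomials makes $K$ a real symmetric matrix, so that $\Psi_{0}^{*}=K\overline{\Upsilon}K$ with $\overline{\Upsilon}(y):=\overline{\Upsilon(y)}$ again diagonal; combined with $K^{-1}=2^{-2\ell}MKM$ this keeps every computation inside the ``$K$-conjugated'' picture. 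Second, the three-term recurrence and the difference equation for the Krawtchouk polynomials with $p=1/2$, $N=2\ell$, read off through the duality, give, with $\mathcal{N}:=\diag(0,1,\dots,2\ell)$,
\[
K^{-1}SK=\ell I-\mathcal{N},\qquad K^{-1}(\ell I-\mathcal{N})K=S^{*},
\]
with $S$ the tridiagonal matrix of \eqref{eq: S}; in particular $K^{-1}\rho(\ell I-\mathcal{N})K=\rho(S^{*})$ for every polynomial $\rho$.

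I would first handle $\Gamma_{2}^{(\nu)}$. From the explicit $T^{(\nu)}$ of Lemma~\ref{eq:formula_Tnu_SU2} (using the symmetry $T^{(\nu)}_{i,i}=T^{(\nu)}_{2\ell-i,2\ell-i}$ to extend the formula to all $i$) the diagonal matrix $(T^{(\nu)})^{-1}T^{(\nu+1)}$ has $i$-th entry $\frac{(\nu+i)(\nu+2\ell-i)}{\nu(\nu+2\ell)}=\frac{(\nu+\ell)^{2}-(\ell-i)^{2}}{\nu(\nu+2\ell)}$, so that $(T^{(\nu)})^{-1}T^{(\nu+1)}=\frac{1}{\nu(\nu+2\ell)}\bigl((\nu+\ell)^{2}I-(\ell I-\mathcal{N})^{2}\bigr)$. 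Applying the second recurrence identity yields
\[
\Gamma_{2}^{(\nu)}(y)=\frac{y(1-y)}{\nu(\nu+2\ell)}\,K^{-1}\Bigl((\nu+\ell)^{2}I-\overline{\Upsilon}(y)^{-1}(S^{*})^{2}\overline{\Upsilon}(y)\Bigr)K .
\]
Since $S^{*}$ is tridiagonal with zero diagonal, $(S^{*})^{2}$ is supported on the main and the $\pm2$ diagonals; conjugating by the diagonal $\overline{\Upsilon}(y)$ replaces those by constant matrices times $1$, $\frac{y}{1-y}$, $\frac{1-y}{y}$, and multiplying by $y(1-y)$ clears the denominators. Hence $\Gamma_{2}^{(\nu)}$ is a matrix polynomial of degree at most two, and inspection of its $y^{2}$-coefficient (a constant matrix whose $\pm2$-diagonal part is nonzero, or a nonzero scalar multiple of $I$ when $2\ell\le1$) shows the degree is exactly two.

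For $\Gamma_{1}^{(\nu)}$ I would differentiate the identity $W^{(\nu+1)}=W^{(\nu)}\Gamma_{2}^{(\nu)}$ and insert \eqref{eq:Psi0_in_y}, the stated formula for $\Psi_{0}^{-1}\Psi_{0}'$, and their adjoints. After simplification one arrives at
\[
\Gamma_{1}^{(\nu)}=(1-2y)\bigl(\nu+\ell+\tfrac12\bigr)P-\tfrac12(\Psi_{0}^{*})^{-1}\hat S\,\Psi_{0}^{*}-\tfrac12\,P\,S^{*},\qquad P:=\frac{\Gamma_{2}^{(\nu)}}{y(1-y)},\quad\hat S:=(T^{(\nu)})^{-1}S^{*}T^{(\nu+1)} .
\]
In $K$-conjugated form each of the three terms a priori carries simple poles at $y=0$ and $y=1$; the claim is that these cancel. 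The first and last terms stay under control: $P$ was computed above, and $PS^{*}=(\Psi_{0}^{*})^{-1}(D_{0}S)\Psi_{0}^{*}$ with $D_{0}:=(T^{(\nu)})^{-1}T^{(\nu+1)}$, which is again of the ``$\{0,\pm2\}$-band times $\{1,\frac{y}{1-y},\frac{1-y}{y}\}$'' type by the first recurrence identity $K^{-1}SK=\ell I-\mathcal{N}$. So the whole issue is the middle term $(\Psi_{0}^{*})^{-1}\hat S\,\Psi_{0}^{*}=K^{-1}\overline{\Upsilon}^{-1}\bigl(K^{-1}\hat S K\bigr)\overline{\Upsilon}K$, i.e.\ the evaluation of $K^{-1}\hat S K$. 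Since the $\nu$-dependence now sits inside the diagonal $T^{(\nu)}$, the clean recurrence identities no longer apply directly; instead one pushes the contiguous relations for the Krawtchouk polynomials collected in the Appendix---those expressing quantities such as $(\nu+i)K_{j}(i)$ through shifts in the degree and in the parameters---through the bands of $(T^{(\nu)})^{-1}S^{*}T^{(\nu+1)}$. Once $K^{-1}\hat S K$ is known, all three terms become constant-matrix combinations of $1$, $\frac{y}{1-y}$, $\frac{1-y}{y}$ (and of $1-2y$ times these), the singular parts cancel, $\Gamma_{1}^{(\nu)}$ is a matrix polynomial of degree at most one, and a check of its linear coefficient gives degree exactly one.

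The main obstacle is precisely the evaluation of $K^{-1}\hat S K$ together with the resulting pole cancellations in $\Gamma_{1}^{(\nu)}$: unlike $(T^{(\nu)})^{-1}T^{(\nu+1)}$, the matrix $\hat S$ is not a polynomial in $\ell I-\mathcal{N}$, and the diagonal $T^{(\nu)}$ is defined piecewise, symmetric about $i=\ell$, so the Krawtchouk identities must be applied with care near that seam. The $\Gamma_{2}^{(\nu)}$ half, by contrast, is immediate once the recurrence identities are in place. With Proposition~\ref{prop:Gamma_2_Gamma1_poly} proved, Proposition~\ref{eq:D^nu_Gamma2_Gamma1} then shows that the deformed pair $(W^{(\nu)},D^{(\nu)})$ allows for the shift $\partial_{y}$.
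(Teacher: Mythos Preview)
Your approach is essentially the paper's, reorganised. Both proofs conjugate by $K$ so that the $y$-dependence lives in the diagonal $\bar\Upsilon$, reduce $\Gamma_2^{(\nu)}$ to a $\{0,\pm2\}$-band structure coming from $\Delta=K^{-1}(T^{(\nu)})^{-1}T^{(\nu+1)}K$, and then for $\Gamma_1^{(\nu)}$ face the same core computation of $K^{-1}\hat S K$ (which the paper writes as $2^{-2\ell}MKMNK$ with $N=\hat S$). Your decomposition of $\Gamma_1^{(\nu)}$ via the right identity $\Psi_0^{-1}\Psi_0'=-\tfrac{1}{2y(1-y)}(S^*-\ell+2\ell y)$ is a repackaging of the paper's formula \eqref{eq:gamma1_SU2}; there the fourth term $y(1-y)\bar\Upsilon^{-1}\Delta\bar\Upsilon'$ carries one of the $\ell$-contributions that you absorb into the first term.

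For $\Gamma_2^{(\nu)}$ your argument is cleaner than the paper's: recognising $(T^{(\nu)})^{-1}T^{(\nu+1)}$ as a quadratic in $\ell I-\mathcal{N}$ and invoking $K^{-1}(\ell I-\mathcal{N})K=S^*$ gives $\Delta=\rho(S^*)$ in one line, bypassing the entrywise summations \eqref{eq:ortho_K}--\eqref{eq:ortho_K_i2} the paper carries out to reach \eqref{eq:expression_Delta}.

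Two small corrections on the $\Gamma_1^{(\nu)}$ side. First, the Krawtchouk identities you need are not of the form ``$(\nu+i)K_j(i)$ through shifts in the degree and parameters''; the parameter $\nu$ is external to the Krawtchouk scheme and enters only as a polynomial coefficient. What is actually used---and what the paper invokes from Lemma~\ref{lemma: rec kraw}---are the contiguous relations for $(2\ell-i)K_k(i+1)$ and $iK_k(i-1)$, which strip the $(2\ell-i)$ and $i$ factors from $\hat S_{i,i\pm1}$; the remaining coefficients are then quadratic in $i$ and are summed against $K_k(i)K_{j'}(i)$ via \eqref{eq:ortho_K_i} and \eqref{eq:ortho_K_i2}. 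Second, there is no ``seam'' in $T^{(\nu)}$ at $i=\ell$: the formula $T^{(\nu)}_{i,i}=\binom{2\ell}{i}(\nu)_i/(\nu+2\ell-i)_i$ is valid for all $0\le i\le2\ell$, and the symmetry $T^{(\nu)}_{i,i}=T^{(\nu)}_{2\ell-i,2\ell-i}$ is a consequence of it, not an extra piece of the definition.
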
 

\begin{proof} First we compute the constant
matrix $\Delta=2^{-2\ell}MKM(T^{(\nu)})^{-1}T^{(\nu+1)}K$. Note that the $i$-th
diagonal element of  $(T^{(\nu)})^{-1}T^{(\nu+1)}$ is
$(\nu+2\ell-i)(\nu+i)/(\nu(\nu+2\ell))$, so that 
\begin{align*}
\Delta_{k,j}=\frac{2^{-2\ell}}{\nu(\nu+2\ell)}\, \sum_{i=0}^{2\ell}
\,\binom{2\ell}{i}\binom{2\ell}{k} \, (\nu+2\ell-i)(\nu+i) \, K_i(k) K_j(i).
\end{align*} 
It follows from the relations \eqref{eq:ortho_K},
\eqref{eq:ortho_K_i} and \eqref{eq:ortho_K_i2},  that 
\begin{multline}
\label{eq:expression_Delta} \nu(\nu+2\ell) \, \Delta=-\sum_{k=0}
\frac{(k+1)(k+2)}{4} \, E_{k,k+2}+\sum_{k=0}
(\ell(\ell-1/2)+k(k/2-\ell)+\nu(\nu+2\ell) )\, E_{k,k}\\ -\sum_{k=0}
\frac{(2\ell-k+1)(2\ell-k+2)}{4} \, E_{k,k-2}. 
\end{multline} 
First we show that
$\Gamma_2^{(\nu)}$ is a polynomial of degree two. It follows from Proposition
\ref{prop:Psi_0_as_Krawtchouk} that 
\begin{equation*}
\Gamma_2^{(\nu)}(y)=y(1-y)\, K^{-1} \,
\bar \Upsilon(y)^{-1} \, \Delta \, \bar\Upsilon(y) \, K.
\end{equation*}
It follows directly from the explicit expressions of $\Delta$ and $\Upsilon$ 
that $y(1-y)\bar \Upsilon(y)^{-1} \, \Delta \, \bar\Upsilon(y)$ is a polynomial 
of degree two and, therefore, so is $\Gamma_2^{(\nu)}$.

Now we prove that $\Gamma_1^{(\nu)}$ is a polynomial of degree one. Note that
\begin{multline} 
\label{eq:gamma1_SU2} 
K\,\Gamma_1^{(\nu)}(y)\,K^{-1}=
-\frac{(2y-1)(2\nu+1)}{2} \bar\Upsilon(y)^{-1} \Delta \bar \Upsilon(y)
-2^{-2\ell-1} \bar\Upsilon(y)^{-1} MKMNK \bar \Upsilon(y) \\ -
\frac{\ell(2y-1)}{2} \bar\Upsilon(y)^{-1} \Delta \bar\Upsilon(y) + y(1-y)
\bar\Upsilon(y)^{-1} \Delta  \bar\Upsilon(y)', 
\end{multline} 
where $N=(T^{(\nu)})^{-1} S^* T^{(\nu+1)}$. 
A simple computation shows that $N$ is the tridiagonal matrix given by
\begin{equation*}
N_{i,i-1}=\frac{i(\nu+2\ell-i)(\nu+2\ell-i+1)}{2\nu(\nu+2\ell)}, \quad
N_{i,i}=0,\quad N_{i,i+1}= \frac{(2\ell-i)(\nu+i)(\nu+i+1)}{2\nu(\nu+2\ell)}.
\end{equation*}
Using the explicit expressions of $N$ and $K$ we obtain: 
\begin{multline*}
2^{-2\ell}(MKMNK)_{k,j}=\frac{\binom{2\ell}{k}}{2\nu(\nu+2\ell)} \,
\sum_{i=0}^{2\ell} \binom{2\ell}{i} \left[ (2\ell-i)(\nu+i)(\nu+i+1) \,
K_k(i)K_j(i+1) \right. \\ \left. + i(2\ell+\nu-i)(2\ell+\nu-i+1)\,
K_k(i)K_j(i-1)\right] 
\end{multline*} 
In the formula above, we replace the terms
$(2\ell-i)K_j(i+1)$ and $iK_j(i-1)$ using Lemma \ref{lemma: rec kraw} and we get
an expression that is be evaluated using \eqref{eq:ortho_K_i} and
\eqref{eq:ortho_K_i2}. We obtain 
\begin{multline*} 
2^{-2\ell-1}(MKMNK)_{k,j} =
\frac{(2\nu +3\ell-k+1)(k+1)(k+2)}{8\nu(\nu+2\ell)} \, \delta_{j,k+2}   \\
+\frac{(k-\ell)(k^2-2\ell
k-l+1-4\nu\ell-2\nu^2-2\ell^2)}{4\nu(\nu+2\ell)}\delta_{j,k}
-\frac{(2\ell-k+1)(2\ell-k+2)(\ell+2\nu+k-1)}{8\nu(\nu+2\ell)}\delta_{j,k-2}.
\end{multline*} 
It follows from \eqref{eq:gamma1_SU2} and the equation above
that $(K\,\Gamma_1^{(\nu)}(y)\,K^{-1})_{i,j}=0$ unless $j=k-2,k,k+2$. Moreover,
a straightforward computation using the explicit expressions of $\Delta$,
$\Upsilon$ and $2^{-2\ell}MKMNK$ shows that
$(K\,\Gamma_1^{(\nu)}(y)\,K^{-1})_{i,j}$  is a polynomial of degree one. This
completes the proof of the proposition. 
\end{proof} 

In order to relate the deformed family $W^{(\nu)}$ with the family 
introduced in \cite{KdlRR} we need the following corollary.

\begin{corollary} 
The polynomial $\Gamma_2^{(\nu)}$ is given explicitly by
\begin{multline*} 
\frac{4\kappa(\kappa+2\ell)}{\ell^2}\, \Gamma_2^{(\nu)}(x)=
(1-2y)^{2}\sum_{i=0}^{2\ell} \frac{\left( \ell-i\right)^{2}}{\ell^2}
E_{i,i}-4y(1-y) \frac{(\ell+\nu)^2}{\ell^2}\\
+(1-2y)\sum_{i=1}^{2\ell}\frac{(i-1-2\ell)(2\ell-2i+1)}{2 \ell^2} E_{i,i-1}
+(1-2y)\sum_{i=0}^{2\ell-1}\frac{(i+1)(2\ell-2i-1)}{2 \ell^2} E_{i,i+1}  \\
+\sum_{i=2}^{2\ell}\frac{(2\ell-i+2)(2\ell-i+1)}{4\ell^2} E_{i,i-2}
+\sum_{i=0}^{2\ell}\frac{- i (2\ell-i+1)-(2\ell-i)(i+1)}{4 \ell^2} E_{i,i}\\ +
\sum_{i=0}^{2\ell-2}\frac{(i+2)(i+1)}{4 \ell^2} E_{i,i+2}. 
\end{multline*}
\end{corollary}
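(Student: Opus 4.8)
The plan is to take the formula for $\Gamma_2^{(\nu)}$ extracted in the proof of Proposition~\ref{prop:Gamma_2_Gamma1_poly}, namely
$$\Gamma_2^{(\nu)}(y)=y(1-y)\,K^{-1}\,\bar\Upsilon(y)^{-1}\,\Delta\,\bar\Upsilon(y)\,K,$$
with $\Delta$ given explicitly by \eqref{eq:expression_Delta}, and to simplify the right-hand side to the stated closed form by carrying out the two conjugations in turn.

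First I would compute the inner conjugation $\Theta(y):=y(1-y)\,\bar\Upsilon(y)^{-1}\Delta\,\bar\Upsilon(y)$. Since $\bar\Upsilon$ is diagonal and, by \eqref{eq:expression_Delta}, $\Delta$ has nonzero entries only on the main diagonal and the $\pm2$ diagonals, this conjugation simply multiplies the $(k,k\pm2)$ entry of $\Delta$ by the ratio $\bar\Upsilon_{k\pm2,k\pm2}/\bar\Upsilon_{k,k}$ and leaves the diagonal unchanged. From $\Upsilon(y)_{j,j}=(-1)^{3j/2}\binom{2\ell}{j}y^{j/2}(1-y)^{(2\ell-j)/2}$ one reads off these ratios as, up to the sign $(-1)^{\pm3}=-1$, equal to $\tfrac{\binom{2\ell}{k+2}}{\binom{2\ell}{k}}\tfrac{y}{1-y}$ and $\tfrac{\binom{2\ell}{k-2}}{\binom{2\ell}{k}}\tfrac{1-y}{y}$; the binomial ratios then cancel precisely the numerators $(k+1)(k+2)$ and $(2\ell-k+1)(2\ell-k+2)$ appearing in \eqref{eq:expression_Delta}. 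The upshot is that $\Theta(y)$ is a matrix polynomial of degree two, whose $(k,k)$ entry is a scalar multiple of $y(1-y)$, whose $(k,k+2)$ entry is a multiple of $y^2$, and whose $(k,k-2)$ entry is a multiple of $(1-y)^2$, with all scalars read off from \eqref{eq:expression_Delta}.

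Next I would perform the outer conjugation $\Gamma_2^{(\nu)}(y)=K^{-1}\Theta(y)K$. Writing $K^{-1}=2^{-2\ell}MKM$ and expanding entrywise, each entry of $\Gamma_2^{(\nu)}$ becomes a sum over $k$ of terms $\binom{2\ell}{a}\binom{2\ell}{k}K_k(a)K_b(k)$ and $\binom{2\ell}{a}\binom{2\ell}{k}K_k(a)K_b(k\pm2)$, weighted by the $k$-dependent coefficients of $\Theta$ (which enter through $1,k,k^2$ on the diagonal and through $(2\ell-k)(2\ell-k-1)$ and $k(k-1)$ off the diagonal). The shifted arguments $K_b(k\pm2)$ are reduced to $K_b(k)$ and $K_b(k\pm1)$ by the three-term recurrence of Lemma~\ref{lemma: rec kraw}, and the resulting sums are all of the shape $\sum_k\binom{2\ell}{k}k^{\,r}K_k(a)K_b(k)$ with $r\in\{0,1,2\}$, which are evaluated by the weighted orthogonality relations \eqref{eq:ortho_K}, \eqref{eq:ortho_K_i}, \eqref{eq:ortho_K_i2}. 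This is precisely the type of manipulation already carried out for $\Delta$ and $2^{-2\ell}MKMNK$ in the proof of Proposition~\ref{prop:Gamma_2_Gamma1_poly}, so no new Krawtchouk identity is required. The surviving terms then organize into contributions on the main diagonal and the $\pm1,\pm2$ diagonals; rewriting $y^2$, $(1-y)^2$ and $y(1-y)$ in terms of $1,(1-2y),(1-2y)^2$ via $4y(1-y)=1-(1-2y)^2$ and collecting powers of $1-2y$ yields exactly the displayed expression, and a final check of the overall scalar normalization (the factor $\nu(\nu+2\ell)$ being inherited from the corresponding factor in $\Delta$) finishes the proof.

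The main obstacle is bookkeeping rather than anything conceptual: applying the recurrence correctly to remove the $\pm2$ argument shifts, matching each resulting sum with the right weighted orthogonality relation, treating the boundary values $k\pm2\notin\{0,\dots,2\ell\}$ (which are harmless, as the relevant binomial coefficients vanish), and finally assembling a sizeable list of terms into the compact pentadiagonal form of the statement. The computation is longer than, but entirely analogous to, the one in the proof of Proposition~\ref{prop:Gamma_2_Gamma1_poly}.
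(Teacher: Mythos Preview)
Your proposal is correct and follows essentially the same route as the paper: both start from $\Gamma_2^{(\nu)}=K^{-1}\Theta K$ with $\Theta(y)=y(1-y)\,\bar\Upsilon^{-1}\Delta\,\bar\Upsilon$ computed explicitly from \eqref{eq:expression_Delta}, and both handle the outer conjugation by $K$ using the Krawtchouk recurrences of Lemma~\ref{lemma: rec kraw}. The only cosmetic difference is that you compute $K^{-1}\Theta K$ directly via the weighted orthogonality sums \eqref{eq:ortho_K}--\eqref{eq:ortho_K_i2}, whereas the paper instead verifies the equivalent identity $K\,\Gamma_2^{(\nu)}=\Theta K$ entrywise by reducing both sides (via the same recurrences together with the three-term relation \eqref{eq:three-term_K}) to linear combinations of $K_h(j)$ for $h=k-2,\dots,k+2$ and matching coefficients by linear independence.
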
 

\begin{remark} Observe that, up to the change of variables
$x=1-2y$ and a constant $4\kappa(\kappa+2\ell)/\ell^2$, the matrix $\Gamma_2$
coincides with the  polynomial of degree two $\Phi$ given in
\cite[(4.9)]{KdlRR}. 
\end{remark} 

\begin{proof} 
The corollary is proven by a straightforward computation. From the proof of Proposition
\ref{prop:Gamma_2_Gamma1_poly}, we have that 
\begin{equation}
\label{eq:Gamma2_proof} K\, \Gamma_2^{(\nu)}(y)=y(1-y) \, \bar \Upsilon(y)^{-1}
\, \Delta \, \bar\Upsilon(y) \, K. 
\end{equation} 
Now the proof follows by a
tedious but direct verification of \eqref{eq:Gamma2_proof}. Using the explicit
expression of $\Delta$ given in \eqref{eq:expression_Delta} the right hand side
of \eqref{eq:Gamma2_proof} becomes 
\begin{multline*} 
y(1-y) \bar \Upsilon(y)^{-1} \, \Delta \, \bar\Upsilon(y)=\sum_{j=2}^{2\ell} y^2
\frac{(2\ell-j-1)(2\ell-j)}{4\nu(\nu+2\ell)} \, E_{j,j+2}\\ +\sum_{j=0}^{2\ell}
y(1-y) \frac{(\ell(\ell-1/2)+j(j/2-\ell)+\nu(\nu+2\ell))}{\nu(\nu+2\ell)} \,
E_{j,j} +\sum_{j=0}^{2\ell-2} (1-y)^2 \frac{j(j-1)}{4\nu(\nu+2\ell)} \, E_{j,j-2}.
\end{multline*} 
Therefore, both sides of \eqref{eq:Gamma2_proof} are
polynomials of degree two. The proof of the corollary follows by showing that
the coefficients of degree 0,1,2 on the left and right hand sides of
\eqref{eq:Gamma2_proof} coincide. We include a sketch of the proof for the
coefficient of degree two and the other cases are analogous. The coefficient of
$y^2$ of the $(j,k)$-th entry of \eqref{eq:Gamma2_proof} is given by
\begin{multline*} 
-(\nu+j)(\nu+2\ell-j)K_j(k)=\frac{j(j-1)}{4}\,
K_{k}(j-2)-(\ell(\ell-1/2)+j(j/2-\ell)+\nu(\nu+2\ell))K_{k}(j)\\
+\frac{(2\ell-j-1)(2\ell-j)}{4}\, K_{k}(j+2). 
\end{multline*} 
Now we apply \eqref{eq:diff_Krawt1} and \eqref{eq:diff_Krawt2} twice on the first and last
terms of the right hand side, and the three term recurrence relation on the left
hand side and the middle term of the right hand side. The equation
above becomes an expression involving Krawtchouk polynomials $K_{h}(j)$ for
$h=k-2,k-1,k,k+1,k+2$ with coefficients which are independent of $j$. The
equation is then verified by checking the coefficients of the Krawtchouk
polynomials of different degrees. The remaining cases follow in an similar way.
\end{proof} 

Recall that our deformed family of weight matrices $W^{(\nu)}$
coincides with the one given in \cite[Definition 2.1]{KdlRR} for $\nu=1$.  Since
 $W^{(\nu+1)}=W^{(\nu)}\Gamma_2^{(\nu)}$, see \eqref{eq:def_Gamma2}, in view of
\cite[Theorem 2.4]{KdlRR}, it follows that the weight matrix in \cite[Definition
2.1]{KdlRR} coincides up to a constant with $W^{(\nu)}$ for all integer values
of $\nu$. From a continuation argument we conclude that the two families of
weights coincide up to constant multiple.

\begin{remark} 
Note that the operator $E^{(\nu)}=E+\nu(A_{0}+B_{1})$ satisfies
$\partial\circ E^{(\nu)}=E^{(\nu+1)}\circ\partial$. Since
$[E^{(\nu+1)},D^{(\nu+1)}]\circ\partial=\partial\circ[E^{(\nu)},D^{(\nu)}]$, and
$[E,D] =0$ see \cite{KvPR,KvPR2}, it follows that $[E^{(\nu)},D^{(\nu)}] = 0$
for all $\nu\in \mathbb{N}$. Moreover for any $\nu\geq0$ and any smooth
$\mathbb{C}^{2\ell+1}$-valued function $F$ we have 
\begin{multline}
\label{eq:commutator_ED} F(D^{(\nu)}E^{(\nu)}-E^{(\nu)}D^{(\nu)}) =
\nu(1-2y)(\partial_y F)E+\nu(FE)(U+\nu-1) \\ +
\nu(FD)(A_0+B_1)-\nu(F(A_0+B_1))D-\nu(1-2y)\partial_y\nu(FE)-\nu(FE)(U+\nu-1).
\end{multline} 
Now \eqref{eq:commutator_ED} is a polynomial function in $\nu$
which is zero for infinitely many values of $\nu$ and thus it  is zero for all
$\nu$. Therefore we have $$[E^{(\nu)},D^{(\nu)}] = 0,\qquad \text{for all
}\nu>0.$$ Since $E^{(\nu)}$ commutes with $D^{(\nu)}$, it follows that the monic
orthogonal polynomials $P^{(\nu)}_n$ are eigenfunctions of $E^{(\nu)}$, i.e.
\begin{equation*}
P^{(\nu)}_nE^{(\nu)} = \Lambda_n(E^{(\nu)})P_n^{(\nu)},\qquad \text{for all
}n\in\mathbb{N},
\end{equation*} 
where $\Lambda_n(E^{(\nu)}) = nB_1+A_0+\nu(B_1+A_0)$. Observe
that since the eigenvalues $\Lambda_n(E^{(\nu)})$ are diagonal with real
entries, and thus Hermitian, the differential operator $E^{(\nu)}$ is symmetric
with respect to $W^{(\nu+1)}$, see \cite[Corollary 4.5]{GrunT}. Observe that the
differential operator $E^{(\nu)}$ coincides with the differential operators in
\cite[Corollary 4.1]{KdlRR}. 
\end{remark}


\section{Examples of dimension two} 
\label{sec:other_examples} 

In this section we apply our method to two of the examples given in \cite[\S 1]{vPR}. 
These examples are related to the Gelfand pairs $(\mathrm{SU}(n+1),\mathrm{U}(n))$
and $(\mathrm{USp}(2n),\mathrm{USp}(2n-2)\times\mathrm{USp}(2))$ and correspond to 
matrix-valued orthogonal polynomials of Jacobi type.

\subsection{Case {\bf a1}} Let us consider the Gelfand pair
$(G,K)=(\mathrm{SU}(n+1),\mathrm{U}(n))$. This example is studied in \cite[Page
7, case {\bf a1}]{vPR}. For $n\geq 2$ we have $\alpha=n-1$, $\beta=0$  and two
free parameters $1\leq i\leq n-1$ and $m\in \mathbb{N}$. We have the following
expressions, \begin{equation} \label{eq:Psi_0_Case_a2}
\Psi_0^{(n,m,i)}(y)=y^{\frac{m}{2}}\begin{pmatrix} y^{\frac12} & 1 \\
y^{\frac12} & \frac{(m+1)-y(m+n-i+1)}{i-n} \end{pmatrix}, \qquad T=
\begin{pmatrix} 1 & 0 \\ 0 & \frac{n-i}{i} \end{pmatrix}, \end{equation} where
we indicate the free parameters as superscripts. Moreover, we have
$a(y)=1-y(n+1)$ and $$F(y)=\begin{pmatrix} (1 + m)(1 + m + 2 n) +
\frac{(i-n)}{(1 - y)} - \frac{(1 + m)^2}{4 y} & \frac{i\sqrt{y}}{1-y} \\
\frac{(i-n)\sqrt{y}}{1-y} & \frac{m(y(m + 2n) - m)}{4y} -\frac{4 i y}{1 - y}
\end{pmatrix}$$
and the differential operator $D=y(1-y)\,
\partial^2_y+\partial_y, (C-yU)-V$ is determined by \begin{align*} C&=\begin{pmatrix}
{\frac {(m+1)(-m-n-2+i) }{-n-m-1+i}}&{\frac {-n+i}{-n-m-1+i}}\\ -{\frac
{m+1}{-n-m-1+i}}&{\frac {-{m} ^{2}+2i-2n+mi-2m-mn-1}{-n-m-1+i}} \end{pmatrix},
\qquad U=\begin{pmatrix} n+m+2 & 0 \\ -1 & n+m+3 \end{pmatrix}, \\
V&=\begin{pmatrix} 0 & 0 \\ 0 & n+m+1-i \end{pmatrix}. \end{align*} As in
Theorem \ref{thm:deformation}, we define
$F^{(\kappa)}=F+\kappa\Psi_0^{-1}(U+\kappa-1)\Psi_0-\kappa(1-2y)\Psi_0^{-1}
\Psi_0'$. By a straightforward computation we verify that
$$F^{(\kappa)}(y)_{0,1}= -{\frac {\sqrt {y} \left( \kappa+i \right)
}{y-1}},\qquad F^{(\kappa)}(y)_{1,0} = \frac {\sqrt {y} \left( -n+i \right)
}{y-1},$$ so that by Remark \ref{rmk:TF=F*T_tridiag}, the matrix $T^{(\kappa)} =
\begin{pmatrix} 1 & 0 \\ 0  & \frac{n-i}{i+\kappa} \end{pmatrix},$ is a solution
to \eqref{eq:conditionT}. Therefore Theorem \ref{thm:deformation} implies
that $(W^{(n,m,i,\kappa)},D^{(n,m,i,\kappa)})$ with 
\begin{align}
\label{eq:formal_W_SUn_nu} 
W^{(n,m,i,\kappa)}(y)&=y^{\kappa}(1-y)^{n-1+\kappa}
\Psi^{n,m,i}_0(y) \, T^{(\kappa)} \, (\Psi^{n,m,i}_0(y))^* \\
D^{(n,m,i,\kappa)}&=y(1-y)\partial_y^2+\partial_y(C+\kappa-y(U+2\kappa))-(V+
\kappa U+\kappa(\kappa-1)), \nonumber
\end{align} 
is a MVCP. It is a
straightforward computation that the functions
$$\Gamma_2^{(\kappa)}=(\Psi_0(y)^*)^{-1} \, (T^{(\kappa)})^{-1}T^{(\kappa)} \,
\Psi_0(y)^*,\qquad
\Gamma_1^{(\kappa+1)}=(W^{(n,m,i,\kappa)}(y))^{-1}(W^{(n,m,i,\kappa+1)}(y))',$$
are matrix-valued polynomials of degrees two and one respectively. We omit this
computation. It then follows that Proposition \ref{eq:D^nu_Gamma2_Gamma1} and
Theorem \ref{thm:Rodrigues_formula} apply to this case. Therefore the
monic orthogonal polynomials $(Q_d^{(\kappa)})_d$ with respect to $W^{(\kappa)}$ satisfy 
$$\partial_y \, Q^{(n,m,i,\kappa)}_d(y)=d\, Q_{d-1}^{(n,m,i,\kappa+1)}(y),$$ 
and the following Rodrigues formula holds 
$$Q_d(y)= G_d \, (\partial_y^d \, W^{(n,m,i,\kappa+d)})
\, (W^{(n,m,i,\kappa)})^{-1},$$ 
for certain constant matrix $G_d$.

\begin{remark} For nonnegative integer values of $\kappa$, it follows directly
from the form of $T^{(\kappa)}$ and \eqref{eq:formal_W_SUn_nu} that
$W^{(n,m,i,\kappa)}=W^{(n+\kappa,m+\kappa,i+\kappa)}$ so that 
$$\partial_y \, Q^{(n,m,i)}_d(y)=d\, Q_{d-1}^{(n+1,m+1,i+1)}(y).$$ 
\end{remark}

\begin{remark} 
The case $m\in \mathbb{Z}_{<0}$ is given in \cite[Page 7, case
{\bf a2}]{vPR}. The matrix $\Psi_{0}=\Psi_{0}^{m<0}$ is given by
$$\Psi_{0}^{m<0}(y)=\begin{pmatrix} \frac{m-y(m-i)}{i} & y^{\frac12} \\ 1 &
y^{\frac12} \end{pmatrix} = \begin{pmatrix} 0 & 1 \\ 1 & 0 \end{pmatrix} \,
\Psi_{0}^{(n,m-1,i+n)}(y) \, \begin{pmatrix} 0 & 1 \\ 1 & 0 \end{pmatrix},$$
where $\Psi_{0}^{(n,-m-1,n-i)}$ is given in \eqref{eq:Psi_0_Case_a2}. The case
{\bf a2} is therefore contained into the case {\bf a1}. 
\end{remark}

\subsection{Case {\bf c1} }

Now we consider the Gelfand pair
$(G,K)=(\mathrm{Sp}(2n),\mathrm{Sp}(2n-2)\times\mathrm{Sp}(2))$. This example
is studied in \cite[Page 7, case {\bf a1}]{vPR}. For $n\geq 3$ we have
$\alpha=2n-3$, $\beta=1$. We have the following expressions \begin{equation*}
\Psi_0(y)=\begin{pmatrix} \sqrt{y} & 1 \\ \sqrt{y} & \frac{y(n-1)-1}{n-2}
\end{pmatrix}, \qquad T=\begin{pmatrix} 1 & 0 \\ 0 & n-2 \end{pmatrix}.
\end{equation*} We also have  $a(y)=2-2yn$, \begin{equation*}
F(t)=\begin{pmatrix} {\frac {4\,{y}^{2}n+4\,yn-{y}^{2}-18\, y+3}{4y \left( y-1
\right) }}&-\frac{2\sqrt{y}}{y-1} \\ \noalign{\medskip}-{\frac {2\sqrt {y}
\left( n-2 \right) }{y-1}}& {\frac {2y}{y-1}} \end{pmatrix}, \end{equation*} and
the matrix-valued differential operator $D=y(1-y)\, \partial^2_y+\partial_y,
(C-yU)-V$, where \begin{equation*} C=\begin{pmatrix} \frac{2n-1}{n-1} &
\frac{n-2}{n-1} \\ \frac{1}{n-1} & \frac{3n-4}{n-1} \end{pmatrix}, \qquad
U=\begin{pmatrix} 2n+1 & 0 \\ -1 & 2n+2 \end{pmatrix},\qquad V=\begin{pmatrix} 0
& 0 \\ 0 & 2n-2 \end{pmatrix}. \end{equation*}
If we define $F^{(\kappa)}$ as in
Theorem \ref{thm:deformation}, a straightforward verification shows that the
unique solution $T^{(\kappa)}$ to \eqref{eq:conditionT} normalized by
$T^{(0)}=T$ is given by 
$$T^{(\kappa)}=\begin{pmatrix} 1 & 0 \\ 0 &
\frac{2(n-2)}{\kappa+2} \end{pmatrix}.$$ 
Now Theorem \ref{thm:deformation} says
that $(W^{(\kappa)},D^{(\kappa)})$ with 
\begin{align*}
W^{(\kappa)}(y)&=y^{\alpha+\kappa}(1-y)^{\beta+\kappa} \Psi_0(y) \, T^{(\kappa)}
\, \Psi_0(y)^*,\\ &= y^{(\kappa+1)}(1-y)^{(\kappa+2n-3)} \, \begin{pmatrix}
2\,{\frac {2\,y+y\kappa+2\,n-4}{2+\kappa}}&2\,{ \frac
{2\,yn-2+y\kappa}{2+\kappa}}\\ \noalign{\medskip}2\,{\frac {2\,yn-2+y
\kappa}{2+\kappa}}&2\,{\frac {2\,{y}^{2}{n}^{2}-4\,{y}^{2}n-2\,yn+\kappa\,yn+2
\,{y}^{2}-2\,y\kappa+2}{ \left( 2+\kappa \right)  \left( n-2 \right) }}
\end{pmatrix},\\
D^{(\kappa)}&=y(1-y)\partial_y^2+\partial_y(C+\kappa-y(U+2\kappa))-(V+\kappa
U+\kappa(\kappa-1)), 
\end{align*} 
is a MVCP. Moreover, if we compute explicitly
the functions $\Gamma_2^{(\kappa)}$ and $\Gamma_1^{(\kappa)}$ given in
\eqref{eq:def_Gamma2} and \eqref{eq:def_Gamma2} respectively, we obtain
\begin{align*} 
\Gamma_2^{(\kappa)}&= y^2 \, \begin{pmatrix} -1&- \left( 3+\kappa
\right) ^{-1} \\ \noalign{\medskip}0&-{\frac {2+\kappa}{3+\kappa}}
\end{pmatrix}+ y\, \begin{pmatrix} {\frac {\kappa\,n-1+2\,n-\kappa}{ \left(
3+\kappa \right)  \left( n-1 \right) }}&{\frac {1}{ \left( 3+\kappa \right)
\left( n-1 \right) }}\\ \noalign{\medskip}{\frac {n-2}{ \left( 3+\kappa \right) 
\left( n-1 \right) }}&{\frac {\kappa\,n+3\,n-4-\kappa}{ \left( 3+ \kappa \right)
 \left( n-1 \right) }} \end{pmatrix}, \\
\Gamma_1^{(\kappa)}&= y \, \begin{pmatrix} -2\,n-1-2\,\kappa&-{\frac
{1+\kappa}{3+\kappa}} \\ \noalign{\medskip}0&-2\,{\frac { \left( 2+\kappa
\right)  \left( \kappa+n +1 \right) }{3+\kappa}} \end{pmatrix} \\
&\hspace{5cm}+ \begin{pmatrix}
{\frac {-3-{\kappa}^{2}+6\,n-2\,\kappa+4\,\kappa\,n+ {\kappa}^{2}n}{ \left(
3+\kappa \right)  \left( n-1 \right) }}&{\frac {3+2\, \kappa}{ \left( 3+\kappa
\right)  \left( n-1 \right) }}\\ \noalign{\medskip} {\frac { \left( n-2 \right) 
\left( 2\,n+1+2\,\kappa \right) }{ \left( 3+ \kappa \right)  \left( n-1 \right)
}}&{\frac {{\kappa}^{2}n+7\,n+6\,\kappa\,n-8 \,\kappa-10-{\kappa}^{2}}{ \left(
3+\kappa \right)  \left( n-1 \right) }} \end{pmatrix}.
\end{align*} so that $\Gamma_2^{(\kappa)}$ is a polynomial of degree two and
$\Gamma_1^{(\kappa)}$ is a polynomial of degree one. Therefore, it follows from
Proposition \ref{eq:D^nu_Gamma2_Gamma1} that the sequence of derivatives
$(\partial_y P^{(\kappa)}_d)$ is orthogonal with respect to $W^{(\kappa+1)}$.

\subsection*{Acknowledgement.} The research of Pablo Rom\'an is supported by the
Radboud Excellence Fellowship. P. Rom\'an was partially supported by CONICET
grant PIP 112-200801-01533, FONCyT grant PICT 2014-3452 and by SECyT-UNC.

\appendix
\section{}
\renewcommand*{\thesection}{\Alph{section}}

Here we give a proof of the explicit expression for the function $\Psi_0$ in
terms of Krawtchouk polynomials. We will use that $\Psi_0$ is a solution to the
equation \eqref{eq:Psi0_in_y} and the fact that the matrix $S$ can be
diagonalized explicitly. Note that the columns of $K$ are precisely the
different eigenvectors of the matrix $S$ from (\ref{eq: S}). More precisely we have
$$S\cdot K=\mathrm{diag}(-\ell,-\ell+1,\ldots,\ell-1,\ell) \cdot K,$$ see \cite[Lemma
4.3]{KvPR2} for the proof. Observe that the columns of $\Psi_0$ are
solutions to the first order differential equation \begin{equation}
\label{eq:First_O_F} y(1-y)\,F'(y)+\frac12\left(S-\ell+2\ell y\right)F(y)=0,
\end{equation} \begin{proposition}\label{pro:solutions_equation} For
$j=0,1,\ldots,2\ell$, let $v_j$ be the eigenvector of $S$ with eigenvalue
$\ell-j$. Then \begin{equation*} \label{eq:solutions_F}
F_j(y)=y^{\frac{j}{2}}(1-y)^{\frac{2\ell-j}{2}}v_j, \end{equation*} is a
solution of \eqref{eq:First_O_F}. Moreover, $\{F_j\}_{j=0}^{2\ell}$ is a basis
of solutions of \eqref{eq:First_O_F}. \end{proposition}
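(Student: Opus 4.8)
The proof is a direct verification followed by a dimension count. First I would differentiate $F_j(y)=y^{j/2}(1-y)^{(2\ell-j)/2}v_j$ and multiply through by $y(1-y)$. Writing $g(y)=y^{j/2}(1-y)^{(2\ell-j)/2}$, one finds $y(1-y)g'(y)=\bigl(\tfrac{j}{2}-\ell y\bigr)g(y)$, since the two contributions combine as $\tfrac{j}{2}(1-y)-\tfrac{2\ell-j}{2}y=\tfrac{j}{2}-\ell y$; hence $y(1-y)F_j'(y)=\bigl(\tfrac{j}{2}-\ell y\bigr)F_j(y)$. On the other hand, because $Sv_j=(\ell-j)v_j$,
\[
\tfrac12\bigl(S-\ell+2\ell y\bigr)F_j(y)=\tfrac12\bigl((\ell-j)-\ell+2\ell y\bigr)F_j(y)=\bigl(-\tfrac{j}{2}+\ell y\bigr)F_j(y).
\]
Adding the two expressions yields $0$, so $F_j$ solves \eqref{eq:First_O_F} for each $j=0,\dots,2\ell$. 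Note that for $\ell\in\tfrac12\mathbb{N}$ the exponents $j/2$ and $(2\ell-j)/2$ are half-integers, so $F_j$ is smooth but in general not polynomial on $(0,1)$.

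For the basis statement I would argue as follows. On the open interval $(0,1)$, where $y(1-y)\neq0$, equation \eqref{eq:First_O_F} is equivalent to the regular first-order linear system $F'(y)=-\tfrac{1}{2y(1-y)}\bigl(S-\ell+2\ell y\bigr)F(y)$, whose coefficient matrix is continuous on $(0,1)$; by the existence and uniqueness theorem its solution space has dimension $\dim\mathbb{C}^{2\ell+1}=2\ell+1$. Since we have exhibited exactly $2\ell+1$ solutions $F_0,\dots,F_{2\ell}$, it remains only to check their linear independence. The eigenvalues $\ell-j$, $j=0,\dots,2\ell$, of $S$ are pairwise distinct (they run through $-\ell,-\ell+1,\dots,\ell$), so the eigenvectors $\{v_j\}_{j=0}^{2\ell}$ form a basis of $\mathbb{C}^{2\ell+1}$; this is the content of the explicit diagonalization $S\cdot K=\diag(-\ell,\dots,\ell)\cdot K$ recalled above. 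If $\sum_{j}c_jF_j(y)=0$ identically on $(0,1)$, then $\sum_{j}c_j\,y^{j/2}(1-y)^{(2\ell-j)/2}v_j=0$ for every $y$, and expanding in the basis $\{v_j\}$ forces each scalar function $c_j\,y^{j/2}(1-y)^{(2\ell-j)/2}$ to vanish identically, whence $c_j=0$. Thus $\{F_j\}_{j=0}^{2\ell}$ is a basis of solutions of \eqref{eq:First_O_F}.

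The argument contains no genuine obstacle; it is essentially a routine check. The only points worth a moment's attention are that the dimension count must be performed on $(0,1)$, since $y=0$ and $y=1$ are singular points of \eqref{eq:First_O_F} and do not enter the count, and that distinctness of the eigenvalues of $S$ — hence independence of the $v_j$ — is exactly what decouples the components of a putative linear relation among the $F_j$. Downstream, this proposition is what allows us to expand each column of $\Psi_0$ in the basis $\{F_j\}$ and then pin down the coefficients from the behaviour at an endpoint together with the Krawtchouk orthogonality relations, leading to Theorem \ref{prop:Psi_0_as_Krawtchouk}.
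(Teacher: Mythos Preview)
Your proof is correct and follows the same approach as the paper, which simply says the first claim follows by substituting $F_j$ into \eqref{eq:First_O_F} and that the set is a basis since the $v_j$ are linearly independent. You have merely filled in the details of the substitution and made the dimension count explicit.
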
 \begin{proof} This follows
by replacing $F_j$ in \eqref{eq:First_O_F}. The set $\{F_j\}_{j=0}^{2\ell}$ is
basis of solutions since $v_j$ are all linearly independent. \end{proof}

A simple calculation shows that \begin{equation} \label{eq:F_j_in_t}
F_j\left((1-\cos(t))/2\right)= 2^{-2\ell}  (-1)^\frac{j}{2}  e^{-i\ell
t}(e^{it}-1)^j(e^{it}+1)^{2\ell-j} v_j. \end{equation}

\begin{remark} \label{remark:leading_coef_Phi0} It follows directly from the
previous proposition that each entry of $F_j(\cos t)$ is a linear combination of
$\{ e^{-i\ell t},e^{-i(\ell-1) t},\ldots,e^{i\ell t}\}$. Moreover, for all
$j=0,1,\ldots,2\ell$, we have \begin{equation} \label{eq:leading_coef_F_j}
F_j((1-\cos(t))/2)=\left(2^{-2\ell}(-1)^{\frac{3j}{2}} e^{-i\ell
t}+\sum_{k=1}^{2\ell} a_k e^{i(-\ell+k) t} \right)v_j, \end{equation} for
certain coefficients $a_k$. \end{remark} \begin{lemma}
\label{lem:Phi_0_leading_term} Let $\Phi_0$ be given by \eqref{def:Phi_0}. Then
$$\Phi_0=M^{-1} \, e^{-i \ell t} + \sum_{k=1}^{2\ell} A_k e^{i(-\ell+k)
t },\qquad M=\sum_{j=0}^{2\ell} \binom{2\ell}{j} E_{j,j},$$ where $A_k$,
$k=1,\ldots, 2\ell$ are $(2\ell+1)\times(2\ell+1)$ matrices. \end{lemma}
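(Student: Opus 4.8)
The plan is to read the claim off directly from the explicit series \eqref{def:Phi_0}, viewing each matrix entry as a Laurent-type polynomial in $e^{it}$ and isolating its lowest-order coefficient.

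First I would record the range of the exponent in \eqref{def:Phi_0}. The index $j_1$ runs over $\{-\tfrac n2,\, -\tfrac n2+1,\,\dots,\,\tfrac n2\}$ and $j_2$ over $\{-\tfrac{2\ell-n}{2},\,\dots,\,\tfrac{2\ell-n}{2}\}$, each in unit steps, so $j_2-j_1$ runs through the arithmetic progression (with unit step) from $-\tfrac{2\ell-n}{2}-\tfrac n2=-\ell$ up to $\tfrac{2\ell-n}{2}+\tfrac n2=\ell$. Hence every entry $(\Phi_0(t))_{n,m}$ is a linear combination of $e^{i(-\ell+k)t}$ for $k=0,1,\dots,2\ell$, which already yields $\Phi_0=\sum_{k=0}^{2\ell}A_k e^{i(-\ell+k)t}$ for some constant matrices $A_k$.

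Next I would pin down $A_0$, the coefficient of $e^{-i\ell t}$. The bound $j_2-j_1\ge-\ell$ is attained only at the extreme values $j_1=\tfrac n2$, $j_2=-\tfrac{2\ell-n}{2}$, so exactly one term of the double sum contributes to $A_0$. For that term $j_1+j_2=n-\ell$, so the Kronecker factor $\delta_{-\ell+m,\,j_1+j_2}$ collapses to $\delta_{m,n}$, showing $A_0$ is diagonal. Then I would evaluate the three binomial factors at $j_1=\tfrac n2$, $j_2=-\tfrac{2\ell-n}{2}$, $m=n$: the first two are $\binom nn=1$ and $\binom{2\ell-n}{0}=1$, while the last is $\binom{2\ell}{2\ell-n}^{-1}=\binom{2\ell}{n}^{-1}$. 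Hence $(A_0)_{n,n}=\binom{2\ell}{n}^{-1}=(M^{-1})_{n,n}$, i.e. $A_0=M^{-1}$, which is the assertion.

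There is essentially no hard step here; the only thing needing care is the bookkeeping of which pair $(j_1,j_2)$ realizes the minimal exponent, and the half-integrality that occurs when $2\ell$ is odd. A short parity check shows $j_2-j_1$ and $-\ell+m$ always lie in the same coset of $\mathbb{Z}$, so the argument applies uniformly in $\ell\in\tfrac12\mathbb{N}$. One could alternatively route the proof through Proposition \ref{pro:solutions_equation} and \eqref{eq:leading_coef_F_j}, expanding the columns of $\Psi_0$ in the basis $\{F_j\}$, but that presupposes the change-of-basis data that this lemma is precisely meant to help determine, so the direct computation from \eqref{def:Phi_0} is the cleaner route.
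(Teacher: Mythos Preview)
Your proposal is correct and follows exactly the route the paper indicates: the paper's proof consists solely of the sentence ``The proof follows directly from \eqref{def:Phi_0},'' and you have supplied precisely the bookkeeping that this sentence leaves implicit. The identification of the unique pair $(j_1,j_2)=(\tfrac n2,-\tfrac{2\ell-n}{2})$ realizing the minimal exponent, together with the evaluation of the three binomial factors, is the entire content of the lemma.
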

\begin{proof} The proof follows directly from \eqref{def:Phi_0}. \end{proof}

\begin{proof}[Proof of Theorem \ref{prop:Psi_0_as_Krawtchouk}] Since $\Psi_0$ is
a solution of \eqref{eq:First_O_F} and $\{F_j\}$ is a basis of solutions, every
column of $\Psi_0$ is a linear combination of the functions $F_j$,
$j=0,\ldots,2\ell$. If we denote by $\Psi_0^{(k)}$ the $k$-th column of
$\Psi_0$, then there exist constants $a_0^{(k)},\ldots,a_{2\ell}^{(k)}$ such
that $$\Psi_0^{(k)}(y)=a_0^{(k)}F_0(y)+\cdots+a_{2\ell}^{(k)}F_{2\ell}(y).$$ If
we make the change of variables variable $\cos t=1-2y$, using that
$\Psi_0^{(k)}(\arccos(1-2y))=\Phi_0^{(k)}(t)$, it follows from
\eqref{eq:F_j_in_t} that \begin{equation} \label{eq:column_Phi_k_t}
\Phi^{(k)}_0(t)=2^{-2\ell}(-1)^{\frac{j}{2}}e^{-i\ell
t}[a_0^{(k)}(e^{it}+1)^{2\ell}v_0+\cdots + (-1)^{\ell}
a^{(k)}_{2\ell}(e^{it}-1)^{2\ell}v_{2\ell}] \end{equation} If we look at the
coefficient of $e^{-i\ell t}$ on both sides of \eqref{eq:column_Phi_k_t}, using
\eqref{eq:leading_coef_F_j} and Lemma \ref{lem:Phi_0_leading_term}, we obtain the
following equation, \begin{equation} \label{eq:leading_coef_ek}
\binom{2\ell}{k}^{-1} e_k = 2^{-2\ell} \sum_{j=0}^{2\ell} a_j^{(k)}
(-1)^{\frac{3j}{2}} v_j, \end{equation} where $e_k$ is the standard basis
vector. Let $\Gamma$ be the diagonal matrix given by
$\Gamma_{j,j}=(-1)^{\frac{3j}{2}}$. Then \eqref{eq:leading_coef_ek}  can be
written in matrix form as \begin{equation*} \binom{2\ell}{k}^{-1}  e_k =
2^{-2\ell} K \, \Gamma \, (a_0^{(k)},\ldots,a_{2\ell}^{(k)})^t. \end{equation*}
Then it follows that \begin{equation*} \label{eq:formula_for_ak}
(a_0^{(k)},\ldots,a_{2\ell}^{(k)})^t=2^{\ell}  \binom{2\ell}{2\ell-k}^{-1} 
\Gamma^{-1} K^{-1} e_k. \end{equation*}

In matrix form, \eqref{eq:column_Phi_k_t} can be written as 
\begin{equation}
\label{eq:Phi_0_ak} 
\widetilde \Phi_0^{(k)}(t)=K\widetilde
\Upsilon(t)M(a_0^{(k)},\ldots,a_{2\ell}^{(k)})^t, 
\end{equation} 
where
$\widetilde \Upsilon$ is the diagonal matrix 
\begin{equation*}
\widetilde\Upsilon(t)_{j,j}=2^{-\ell}(-1)^{\frac{j}{2}}\binom{2\ell}{i}e^{-i\ell
t}(e^{it}-1)^j(e^{it}+1)^{2\ell-j}. 
\end{equation*} If we replace this
expression in \eqref{eq:Phi_0_ak} we obtain 
\begin{equation*} \widetilde
\Phi_0^{(k)}(t)=2^{2\ell} \binom{2\ell}{2\ell-k}^{-1} K\widetilde
\Upsilon(t)\Gamma^{-1}K^{-1}e_k, 
\end{equation*} 
which leads to
\begin{equation}
\label{eq:column_Psi0_k}
\Psi_0^{(k)}(y)=2^{2\ell} K
\Upsilon(y)MK^{-1}M^{-1}e_k,
\end{equation}
The column vector \eqref{eq:column_Psi0_k} is precisely the $k$-th column of \eqref{eq:decomposition_Phi0}. This completes the proof of the theorem.
\end{proof}


We proceed to collect results and
formulas about Krawtchouk polynomials that we use in Section
\ref{sec:MVchebyshev}. We will need the following relations \begin{align}
\label{eq:ortho_K} \sum_{i=0}^{2\ell} \binom{2\ell}{i} \,
K_n(i)K_m(i)=\delta_{n,m}\, 2^{2\ell} \, \binom{2\ell}{n}^{-1},\\
\label{eq:three-term_K} -i\, K_n(i) = \frac{N-n}{2} \, K_{n+1}(i) - \frac{N}{2}
\, K_n(i) +\frac{n}{2} \, K_{n-1}(i).
\end{align}
Using the three-term recurrence relation \eqref{eq:three-term_K}  and
orthogonality \eqref{eq:ortho_K}, we obtain \begin{equation}
\label{eq:ortho_K_i} \sum_{i=0}^{2\ell} \binom{2\ell}{i} \, i\,
K_k(i)K_j(i)=2^{2\ell} \, \binom{2\ell}{k}^{-1} \begin{cases} -\frac{k+1}{2},&
j=k+1,\\ \ell,& j=k,\\ -\frac{2\ell-k+1}{2},& j=k-1,\\ 0, & \text{otherwise},
\end{cases} \end{equation} and \begin{equation} \label{eq:ortho_K_i2}
\sum_{i=0}^{2\ell} \binom{2\ell}{i} \, i^2\, K_k(i)K_j(i)=2^{2\ell} \,
\binom{2\ell}{k}^{-1}\begin{cases} \frac{(k+1)(k+2)}{4},& j=k+2,\\ -\ell(k+1),&
j=k+1,\\ (\ell(\ell+1/2)+k(\ell-k/2)),& j=k,\\ - \ell(2\ell-k+1),& j=k-1,\\
\frac{(2\ell-k+1)(2\ell-k+2)}{4},& j=k-2, \\ 0, & \text{otherwise} \end{cases}
\end{equation}

\begin{lemma}\label{lemma: rec kraw} The following recurrence relations hold:
\begin{eqnarray} \label{eq:diff_Krawt1}
(2\ell-i)K_{k}(i+1)&=&\frac{1}{2}(2\ell-k)K_{k+1}(i)+(\ell-k)K_{k}(i)-\frac{1}{2
}kK_{k-1}(i),\\ \label{eq:diff_Krawt2}
iK_{k}(i-1)&=&-\frac{1}{2}(2\ell-k)K_{k+1}(i)+(\ell-k)K_{k}(i)+\frac{1}{2}kK_{k-
1}(i) \end{eqnarray} \end{lemma}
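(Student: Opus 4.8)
The plan is to derive both recurrences from the generating function of the Krawtchouk polynomials specialized to $p=1/2$ and $N=2\ell$. Writing
\[
G(x,t):=(1-t)^{x}(1+t)^{2\ell-x}=\sum_{n=0}^{2\ell}\binom{2\ell}{n}K_{n}(x)\,t^{n},
\]
see \cite[\S1.10]{KoekS}, I would first record three elementary consequences: the two shift relations $G(i+1,t)=\tfrac{1-t}{1+t}\,G(i,t)$ and $G(i-1,t)=\tfrac{1+t}{1-t}\,G(i,t)$, read off from the closed form, and the logarithmic-derivative identity $(1-t^{2})\,\partial_{t}G(i,t)=(2\ell-2i-2\ell t)\,G(i,t)$, obtained by differentiating $G(i,t)$ in $t$.

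Next I would eliminate the explicit dependence on $i$ by forming the two combinations $(2\ell-i)\,G(i+1,t)\pm i\,G(i-1,t)$. By the shift relations each equals $\big[(2\ell-i)(1-t)^{2}\pm i(1+t)^{2}\big]\,G(i,t)/(1-t^{2})$, and expanding the bracket gives $2\ell(1+t^{2})-4(\ell-i)t$ in the $+$ case and $(2\ell-2i)(1+t^{2})-4\ell t$ in the $-$ case. Substituting the derivative identity to rewrite the leftover factor $\ell-i$ (resp.\ $2\ell-2i$) makes $1-t^{2}$ cancel, leaving the two identities
\[
(2\ell-i)\,G(i+1,t)+i\,G(i-1,t)=2\ell\,G(i,t)-2t\,\partial_{t}G(i,t),
\]
\[
(2\ell-i)\,G(i+1,t)-i\,G(i-1,t)=(1+t^{2})\,\partial_{t}G(i,t)-2\ell t\,G(i,t).
\]
The first one is nothing but the three-term recurrence \eqref{eq:three-term_K} transported through the self-duality $K_{n}(x)=K_{x}(n)$, so it could be cited rather than re-derived.

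Finally I would extract the coefficient of $t^{k}$ from each of these identities. On the left the coefficient of $t^{k}$ in $G(i\pm1,t)$ is $\binom{2\ell}{k}K_{k}(i\pm1)$; on the right the terms $G$, $t\,\partial_{t}G$, $tG$ and $(1+t^{2})\,\partial_{t}G$ contribute $\binom{2\ell}{k}K_{k}(i)$, $k\binom{2\ell}{k}K_{k}(i)$, $\binom{2\ell}{k-1}K_{k-1}(i)$ and $(k+1)\binom{2\ell}{k+1}K_{k+1}(i)+(k-1)\binom{2\ell}{k-1}K_{k-1}(i)$ respectively. The elementary identities $(k+1)\binom{2\ell}{k+1}=(2\ell-k)\binom{2\ell}{k}$ and $(2\ell-k+1)\binom{2\ell}{k-1}=k\binom{2\ell}{k}$ then collapse every binomial to a common factor $\binom{2\ell}{k}$, which is divided out; this produces $(2\ell-i)K_{k}(i+1)\pm i\,K_{k}(i-1)$ expressed through $K_{k+1}(i),K_{k}(i),K_{k-1}(i)$, and adding and subtracting these two scalar relations gives \eqref{eq:diff_Krawt1} and \eqref{eq:diff_Krawt2}. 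I expect the only delicate point to be pure bookkeeping: making the factor $1-t^{2}$ cancel correctly in the middle step and tracking signs in the binomial simplifications; there is no conceptual difficulty, and the boundary cases $k\in\{0,2\ell\}$ and $i\in\{0,2\ell\}$ are automatically consistent because the spurious terms $K_{-1}$ and $K_{2\ell+1}$ enter with vanishing coefficients.
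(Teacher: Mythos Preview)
Your generating-function argument is correct and complete; every algebraic step checks out, and the final coefficient extraction does yield \eqref{eq:diff_Krawt1} and \eqref{eq:diff_Krawt2} exactly as you claim. The paper, however, proceeds quite differently: it first observes that the sum of the two relations is the standard difference equation for $K_{k}$, so only one needs to be established, and then derives that one from two contiguous relations for ${}_{2}F_{1}$ evaluated at $z=2$ (recall $K_{k}(i)= {}_{2}F_{1}(-k,-i;-2\ell;2)$). Your approach is more self-contained and elementary for the specific parameter $p=1/2$, since it uses nothing beyond the closed-form generating function and termwise differentiation; the paper's approach, by contrast, works uniformly in the hypergeometric parameters and makes the connection to Gauss contiguous relations explicit, which is natural given how the polynomials are defined in \cite{KoekS}. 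Either method reduces the problem to the same pair of linear combinations (sum and difference of the two desired identities), so the two proofs converge at that point.
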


\begin{proof} Note that we only have to prove one of the relations, because the
relations add up to the difference equation \cite[(1.10.5)]{KoekS}.
Alternatively, replace $i$ by $2\ell-i$ in the first equation and apply the
basic relation $\rFs{2}{1}{a,b}{c}{z}=(1-z)^{-a}\rFs{2}{1}{a,c-b}{c}{z/(z-1)}$
specialized in $z=2$. The basic relations \begin{multline*}
(2\ell-i)\rFs{2}{1}{-i-1,-k}{-2\ell}{z}+(i+k-2\ell)\rFs{2}{1}{-i,-k}{-2\ell}{z}=
\\ -k(z-1)\rFs{2}{1}{-i,-k+1}{-2\ell}{z}, \end{multline*} \begin{multline*}
(2\ell-k)\rFs{2}{1}{-i,-k-1}{-2\ell}{z}+(2k-2\ell+(i-k)z)\rFs{2}{1}{-i,-k}{-2
\ell}{z}=\\ -k(z-1)\rFs{2}{1}{-i,-k+1}{-2\ell}{z}, \end{multline*} imply the
result. Indeed, subtracting one half times the second from the first yields
\begin{multline*}
(2\ell-i)\rFs{2}{1}{-i-1,-k}{-2\ell}{z}=\frac{1}{2}(2\ell-k)\rFs{2}{1}{-i,-k-1}{
-2\ell}{z}+\\
(\ell-\frac{1}{2}zk+(\frac{1}{2}z-1)i)\rFs{2}{1}{-i,-k}{-2\ell}{z}-\frac{1}{2}
\rFs{2}{1}{-i,-k+1}{-2\ell}{z}, \end{multline*} which specializes to the desired
equation for $z=2$. \end{proof}

\end{document}